\numberwithin{equation}{section}
\numberwithin{figure}{section}
\theoremstyle{plain}
\newtheorem{thm}{\protect\theoremname}[section]
\theoremstyle{remark}
\newtheorem{rem}[thm]{\protect\remarkname}
\theoremstyle{plain}
\newtheorem{prop}[thm]{\protect\propositionname}
\theoremstyle{plain}
\newtheorem{cor}[thm]{\protect\corollaryname}
\theoremstyle{plain}
\newtheorem{lem}[thm]{\protect\lemmaname}
\theoremstyle{definition}
\newtheorem{defn}[thm]{\protect\definitionname}
\newcommand{\df}{\mathrm{d}}
\providecommand{\corollaryname}{Corollary}
\providecommand{\definitionname}{Definition}
\providecommand{\lemmaname}{Lemma}
\providecommand{\propositionname}{Proposition}
\providecommand{\remarkname}{Remark}
\providecommand{\theoremname}{Theorem}
\begin{document}
\global\long\def\df{\mathrm{def}}%
\global\long\def\eqdf{\stackrel{\df}{=}}%
\global\long\def\ep{\varepsilon}%
\global\long\def\ind{\mathds{1}}%

\title{Short geodesics and small eigenvalues on random hyperbolic punctured
spheres}
\author{Will Hide and Joe Thomas}

\maketitle

\begin{abstract}
We study the number of short geodesics and small eigenvalues on Weil-Petersson
random genus zero hyperbolic surfaces with $n$ cusps in the regime
$n\to\infty$. Inspired by work of Mirzakhani and Petri \cite{Mi.Pe19},
we show that the random multi-set of lengths of closed geodesics converges,
after a suitable rescaling, to a Poisson point process with explicit
intensity. As a consequence, we show that the Weil-Petersson probability
that a hyperbolic punctured sphere with $n$ cusps has at least $k=o(n)$
arbitrarily small eigenvalues tends to $1$ as $n\to\infty$.
\end{abstract}
{\footnotesize{}\tableofcontents{}}{\footnotesize\par}

\section{Introduction}

\subsection{Overview of main results}

For hyperbolic surfaces, understanding the lengths of closed geodesics
offers a deep insight into both the geometry and spectral theory of
the surface. In this paper, we consider the genus zero setting and
study the distribution of short closed geodesics on random surfaces
sampled from the moduli space $\mathcal{M}_{0,n}$ of hyperbolic punctured
spheres with respect to the Weil-Petersson probability measure $\mathbb{P}_{n}$,
as the number of cusps tends to infinity (see section \ref{sec:background}
for further details on this model). In particular, we show that the
number of short closed geodesics exhibit Poissonian statistics. Using
similar ideas, we gain an understanding about the number of small
Laplacian eigenvalues that exist on typical such surfaces.

To state these results more precisely, we introduce the following
notation. Let $X\in\mathcal{M}_{0,n}$ and let $0\leq a<b$ be real
numbers. Denote by $N_{n,[a,b]}(X):\left(\mathcal{M}_{0,n},\mathbb{P}_{n}\right)\to\mathbb{N}$
the random variable that counts the number of primitive closed geodesics
on $X$ with lengths in the interval $\left[\frac{a}{\sqrt{n}},\frac{b}{\sqrt{n}}\right]$.
We remark that when $n$ is sufficiently large, $\frac{b}{\sqrt{n}}<2\mathrm{arcsinh}(1)$,
and so in this case, the geodesics that are counted by the random
variable will be simple by the Collar Theorem \cite[Theorem 4.4.6]{Bu2010}.
In section \ref{sec:Poisson-Statistics} we prove the following, inspired
by the result of Mirzakhani and Petri \cite{Mi.Pe19} for closed surfaces
of large genus. Recall that on a probability space $(X,\mathbb{P})$,
a random variable $X:\Omega\to\mathbb{N}$ is called Poisson distributed
with mean $\lambda\in[0,\infty)$ if $\mathbb{P}(X=k)=\frac{\lambda^{k}e^{-\lambda}}{k!}$
for all $k\in\mathbb{N}$.
\begin{thm}
\label{thm:main-thm}Let $\ell\in\mathbb{N}$ and suppose that $0\leq a_{i}<b_{i}$
are real numbers for $i=1,\ldots,\ell$ such that the intervals $[a_{i},b_{i}]$
are pairwise disjoint. Then, the sequence of random vectors 
\[
\left(N_{n,[a_{1},b_{1}]}(X),\ldots,N_{n,[a_{\ell},b_{\ell}]}(X)\right)_{n\geq3}
\]
 converges in distribution as $n\to\infty$ to a vector of independent
Poisson distributed random variables with means
\[
\lambda_{[a_{i},b_{i}]}=\frac{b_{i}^{2}-a_{i}^{2}}{2}\frac{\left(j_{0}\pi\right)^{2}}{4}\left(1-\frac{J_{3}(j_{0})}{J_{1}(j_{0})}\right),
\]
where $J_{\alpha}$ are Bessel functions of the first kind and $j_{0}$
is the first positive zero of $J_{0}$.

\end{thm}

\begin{rem}
One can compute that $\frac{\left(j_{0}\pi\right)^{2}}{4}\left(1-\frac{J_{3}(j_{0})}{J_{1}(j_{0})}\right)\approx8.7997$.
It is equal to the constant $\sum_{i=2}^{\infty}\frac{V_{0,i+1}}{i!}\left(\frac{x_{0}}{2\pi^{2}}\right)^{i-1}$
where $x_{0}=-\frac{1}{2}j_{0}J_{0}'(j_{0})$ and $V_{0,m}$ is the
volume of the moduli space $\mathcal{M}_{0,m}$, and this constant
arises from using volume asymptotics of Manin and Zograf \cite{Ma.Zo00}
which we reproduce in Theorem \ref{thm:ZografManin}. 
\end{rem}

\begin{rem}
For $X\in\mathcal{M}_{0,n}$, let $\xi_{X}^{(n)}$ be the point measure
associated with the multiset $\{\sqrt{n}\ell_{\gamma}(X)\}_{\gamma}$
where the indexing runs over homotopy classes of closed curves on
an $n$ punctured sphere, and let $\xi^{(n)}$ denote the corresponding
point process on $[0,\infty)$ with respect to $(\mathcal{M}_{0,n},\mathbb{P}_{n})$.
Theorem \ref{thm:main-thm} is equivalent to saying that $\xi^{(n)}$
converges in distribution to a Poisson point process on $[0,\infty)$
with intensity $\frac{\left(j_{0}\pi\right)^{2}}{4}\left(1-\frac{J_{3}(j_{0})}{J_{1}(j_{0})}\right)x\mathrm{d}x$.
\end{rem}

This result is proven using the method of factorial moments (see Proposition
\ref{prop:factorial-moments}) which requires the computation of the
expectation of the random variables $N_{n,[a,b]}(X)(N_{n,[a,b]}(X)-1)\cdots(N_{n,[a,b]}(X)-k)$
for each $k=0,1,\ldots$. These random variables have the useful interpretation
of being the number of ordered lists of length $k+1$ consisting of
distinct primitive closed geodesics with lengths in $\left[\frac{a}{\sqrt{n}},\frac{b}{\sqrt{n}}\right]$.
The expected number of such lists is then ripe for computation using
the integration machinery developed by Mirzakhani \cite{Mi2007},
which we recall in Section \ref{sec:background}. A key insight of
our proof is classifying the topological types of multicurves that
make the dominant contribution to these expectations (see Definition
\ref{def:multicurve-types}).

An interesting consequence of Theorem \ref{thm:main-thm} is that
we can gain an understanding of the systole of a typical surface.
For example, we see that for $x>0$, 

\[
\lim_{n\to\infty}\mathbb{P}_{n}\left(\mathrm{sys}(X)<\frac{x}{\sqrt{n}}\right)=1-\exp\left(-\frac{x^{2}}{2}\frac{\left(j_{0}\pi\right)^{2}}{4}\left(1-\frac{J_{3}(j_{0})}{J_{1}(j_{0})}\right)\right),
\]
where $\mathrm{sys}(X)$ is the random variable measuring the length
of the systole of the surface $X$. We can compare this to the deterministic
bounds obtained by Schmutz \cite[Theorem 14]{Sc1994} which show that
for any $X\in\mathcal{M}_{0,n}$,

\begin{equation}
\mathrm{sys}(X)\leq4\mathrm{arcosh}\left(\frac{3n-6}{n}\right),\label{eq:systole-bd}
\end{equation}
and this bound is sharp for $n=4,6$ and $12$. See also Lakeland
and Young \cite{La.Yo22} for sharper bounds on the systole in the
case of arithmetic hyperbolic punctured spheres. We show that asymptotically
almost surely (that is, with probability tending to 1 as $n\to\infty$)
the systole of a surface is much smaller than this in the Weil-Petersson
model. In fact, we also have the complementary result.
\begin{prop}
\label{prop:prop1.3}There exists a constant $B>0$ such that for
any constants $0<\varepsilon<\frac{1}{2}$, $A>0$, any $c_{n}<An^{\varepsilon}$
and $n$ sufficiently large, 
\[
\mathbb{P}_{n}\left(\mathrm{sys}\left(X\right)>\frac{c_{n}}{\sqrt{n}}\right)\leq\max\left\{ Bc_{n}^{-2},\frac{B}{\sqrt{n}}\right\} .
\]
\end{prop}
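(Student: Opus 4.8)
The plan is to bound the probability that the systole exceeds $c_n/\sqrt{n}$ by a second-moment/Chebyshev argument applied to the count of short geodesics. Concretely, set $b_n = c_n$ and consider the random variable $N_{n,[0,b_n]}(X)$ counting primitive closed geodesics of length at most $b_n/\sqrt{n}$. The event $\{\mathrm{sys}(X) > c_n/\sqrt{n}\}$ is precisely the event $\{N_{n,[0,b_n]}(X) = 0\}$, so by the Paley--Zygmund inequality (or directly by Chebyshev),
\[
\mathbb{P}_n\bigl(N_{n,[0,b_n]}(X) = 0\bigr) \leq \frac{\mathrm{Var}_n\bigl(N_{n,[0,b_n]}(X)\bigr)}{\mathbb{E}_n\bigl[N_{n,[0,b_n]}(X)\bigr]^2} = \frac{\mathbb{E}_n\bigl[N_{n,[0,b_n]}^2\bigr]}{\mathbb{E}_n\bigl[N_{n,[0,b_n]}\bigr]^2} - 1.
\]
So it suffices to show that $\mathbb{E}_n[N_{n,[0,b_n]}]$ grows like a constant multiple of $c_n^2$ while $\mathbb{E}_n[N_{n,[0,b_n]}^2] = \mathbb{E}_n[N_{n,[0,b_n]}(N_{n,[0,b_n]}-1)] + \mathbb{E}_n[N_{n,[0,b_n]}]$ is of order $c_n^4 + c_n^2$ with the leading $c_n^4$ term matching $\mathbb{E}_n[N_{n,[0,b_n]}]^2$ to top order; then the ratio above is $O(c_n^{-2})$, which is the claimed bound.

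The first step is the first-moment estimate: using Mirzakhani's integration formula together with the Manin--Zograf volume asymptotics (Theorem \ref{thm:ZografManin}), one obtains, uniformly for $c_n \leq A n^{\varepsilon}$ with $\varepsilon < \tfrac12$, an estimate of the form $\mathbb{E}_n[N_{n,[0,b_n]}] = C\, c_n^2\,(1 + o(1))$ for an explicit constant $C>0$ (this is exactly the $n\to\infty$ computation behind Theorem \ref{thm:main-thm}, now carried out with a slowly growing upper endpoint rather than a fixed one). The restriction $\varepsilon < \tfrac12$ guarantees $b_n/\sqrt n \to 0$, so only the one-cusp-pair-of-pants topological type contributes to leading order and the error terms from the volume asymptotics remain lower order. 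The second step is the analogous second factorial moment computation: again by Mirzakhani's integration formula, $\mathbb{E}_n[N_{n,[0,b_n]}(N_{n,[0,b_n]}-1)]$ is a sum over topological types of ordered pairs of disjoint short geodesics; the dominant type (two disjoint one-cusped pairs of pants, or the relevant genus-zero analogue) gives $C^2 c_n^4(1+o(1))$, matching $\mathbb{E}_n[N_{n,[0,b_n]}]^2$, and all other types are lower order. Assembling these into the Chebyshev bound yields $\mathbb{P}_n(\mathrm{sys}(X) > c_n/\sqrt n) \leq B c_n^{-2}$ for a uniform constant $B$ and $n$ large.

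The main obstacle is uniformity: all the volume-asymptotic and integration estimates used to prove Theorem \ref{thm:main-thm} are stated for a \emph{fixed} interval $[a,b]$ and taken in the limit $n\to\infty$, whereas here the upper endpoint $c_n$ is allowed to grow (like $n^{\varepsilon}$). I would need to revisit the error terms in the Manin--Zograf asymptotics and in Mirzakhani's integration formula and check that the implied constants can be made uniform in $c_n$ over the range $c_n < An^{\varepsilon}$; the condition $\varepsilon < \tfrac12$ is precisely what keeps $b_n/\sqrt n$ below the Collar-Theorem threshold $2\,\mathrm{arcsinh}(1)$ and keeps the sub-leading topological contributions genuinely sub-leading. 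A secondary technical point is verifying that the leading constants in the first and second moments agree exactly so that the difference of $c_n^4$ terms cancels, leaving a quantity of order $c_n^2$ in the numerator after dividing; this is where the structural observation about dominant multicurve types (Definition \ref{def:multicurve-types}) does the real work, since it reduces both computations to the same combinatorial building block.
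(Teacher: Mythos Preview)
Your second-moment strategy is exactly the paper's, but the paper applies it to a smaller and more tractable random variable. Rather than the full count $N_{n,[0,c_n]}(X)$, the paper works with $N_2\bigl(X,\tfrac{c_n}{\sqrt n}\bigr)$, the number of simple closed geodesics of length at most $c_n/\sqrt n$ that separate off \emph{exactly two} cusps. Since $\{\mathrm{sys}(X)>c_n/\sqrt n\}\subseteq\{N_2=0\}$ this still bounds the systole probability from above, and because $N_2$ involves a single topological type (a two-cusp pair of pants glued to a sphere with $n-2$ cusps and one boundary) both moments are single Mirzakhani integrals rather than sums over cusp-splittings. One obtains directly
\[
\mathbb{E}_n[N_2]=\frac{x_0c_n^{2}}{4}\Bigl(1+O\bigl(\tfrac{c_n^{2}}{n}\bigr)\Bigr),\qquad
\mathbb{E}_n\bigl[(N_2)_2\bigr]=\frac{x_0^{2}c_n^{4}}{16}\Bigl(1+O\bigl(\tfrac{c_n^{2}}{n}\bigr)\Bigr),
\]
and then plugs into the Paley--Zygmund bound.

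This simplification is precisely what addresses the uniformity obstacle you correctly identify. With your full count $N_{n,[0,c_n]}$, the first and second moments each inherit an $O(n^{-1/2})$ error from approximating the sum over all separation types $c=2,3,\dots$ by the constant $\alpha$ (this is the content of Proposition~\ref{prop:Unnested contribution}); that error does not cancel between $\mathbb{E}[(N)_2]$ and $\mathbb{E}[N]^2$, and you would be left with $\mathrm{Var}(N)/\mathbb{E}[N]^2=O(n^{-1/2})+O(c_n^{-2})$, which fails to give $Bc_n^{-2}$ once $c_n\gg n^{1/4}$. Restricting to the single type $c=2$ removes the sum entirely; the remaining errors come only from the $\sinh$ expansion in Lemma~\ref{lem:volbds} and the Manin--Zograf asymptotic, and are of size $O(c_n^2/n)$, which vanishes for every $\varepsilon<\tfrac12$. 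So the paper's passage to $N_2$ is the mechanism by which the leading constants match with controllably small remainder, rather than an appeal to the nested/unnested classification of Definition~\ref{def:multicurve-types}.
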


\begin{rem}
Proposition \ref{prop:prop1.3} also remains true on $\mathcal{M}_{g,n}$
when the genus $g$ is fixed and non-zero.

\end{rem}

\begin{rem}
\label{thm:thm1.4}By similar methods to \cite[Theorem 5.1]{Mi.Pe19}
it is possible to write

\[
\sqrt{n}\mathbb{E}_{n}\left(\mathrm{sys}(X)\right)=\int_{0}^{\infty}\mathbb{P}_{n}\left(\mathrm{sys}\left(X\right)>\frac{x}{\sqrt{n}}\right)\mathrm{d}x=\int_{0}^{\infty}\mathbb{P}_{n}\left(N_{n,[0,x]}(X)=0\right)\mathrm{d}x.
\]
By taking the $n\to\infty$ limit, interchanging the limit and integral
and using the convergence in distribution of $N_{n,[0,x]}(X)$ from
Theorem \ref{thm:main-thm}, one would obtain 
\begin{equation}
\lim_{n\to\infty}\sqrt{n}\mathbb{E}_{n}\left(\mathrm{sys}\left(X\right)\right)=\frac{\sqrt{2}}{j_{0}\sqrt{\pi}\sqrt{1-\frac{J_{3}(j_{0})}{J_{1}(j_{0})}}}\approx0.4225.\label{eq:expectation}
\end{equation}
However, to justify this interchange of limit and integral we require
better bounds on $\mathbb{P}_{n}\left(\mathrm{sys}\left(X\right)>\frac{x}{\sqrt{n}}\right)$
than those obtained in Proposition \ref{prop:prop1.3} to apply the
dominated convergence theorem. By (\ref{eq:systole-bd}) it would
be sufficient to obtain an improvement of the bound in Proposition
(\ref{prop:prop1.3}) to just $Bc_{n}^{-2}$. The extra term $\frac{B}{\sqrt{n}}$
in Proposition \ref{prop:prop1.3} essentially arises from using the
trivial bounds in Lemma \ref{lem:volbds}. In a forthcoming work of
the authors, we prove a large-$n$ asymptotic for $V_{g,n}\left(x_{1},\dots,x_{k}\right)$
from which (\ref{eq:expectation}) can be deduced. However the methods
of the current paper fall short of this so we do not claim it here.
\end{rem}

Our methods also allow us to consider vectors consisting of random
variables counting different topological types of short geodesics.
For an integer $c\geqslant2$ and real numbers $a,b\geqslant0$, we
let $N_{n,c,[a,b]}(X):\left(\mathcal{M}_{0,n},\mathbb{P}_{n}\right)\to\mathbb{N}$
denote the random variable which counts the number of primitive closed
geodesics with lengths in $\left[\frac{a}{\sqrt{n}},\frac{b}{\sqrt{n}}\right]$
which bound $c$ cusps. We prove the following.
\begin{thm}
\label{thm:main-thm-2}Let $\ell\in\mathbb{N}$, $c_{1},\ldots,c_{\ell}\geq2$
be distinct integers and $0\leq a_{i}<b_{i}$ be real numbers for
$i=1,\ldots,\ell$. Then, the sequence of random vectors 
\[
\left(N_{n,c_{1},[a_{1},b_{1}]}(X),\ldots,N_{n,c_{\ell},[a_{\ell},b_{\ell}]}(X)\right){}_{n\geq3}
\]
 converges in distribution as $n\to\infty$ to a vector of independent
Poisson distributed random variables with means 

\[
\lambda_{c_{i},[a_{i},b_{i}]}=\frac{b_{i}^{2}-a_{i}^{2}}{2}\frac{V_{0,c_{i}+1}}{c_{i}!}\left(\frac{x_{0}}{2\pi^{2}}\right)^{c_{i}-1},
\]
where $V_{0,m}$ is the volume of the moduli space $\mathcal{M}_{0,m}$
and $x_{0}=-\frac{1}{2}j_{0}J_{0}'(j_{0})$ with $J_{0}$ the Bessel
function of the first kind and $j_{0}$ its first positive zero.
\end{thm}

As a consequence of Theorem \ref{thm:main-thm-2}, we gain an understanding
of the topological nature of the systole in the large $n$ limit.
This can be seen from the following corollary.
\begin{cor}
\label{cor:systole-type}Let $k\geq2$ be an integer, $n\in\mathbb{N}$
and $x>0$ be real. Suppose that $\mathcal{A}_{k,x,n}\subseteq\mathcal{M}_{0,n}$
is the collection of surfaces $X$ that satisfy the following conditions:
\begin{enumerate}
\item for each $2\leq i<k$, no systolic curve of X separates off exactly
$i$ cusps, 
\item there exists a systolic curve on $X$ that separates off at least
$k$ cusps, with length less than $\frac{x}{\sqrt{n}}$.
\end{enumerate}
Then, 

\[
\lim_{n\to\infty}\mathbb{\mathbb{P}}_{n}\left(\mathcal{A}_{k,x,n}\right)\geq e^{-\frac{x^{2}}{2}\sum_{i=2}^{k-1}\frac{V_{0,i+1}}{i!}\left(\frac{x_{0}}{2\pi^{2}}\right)^{i-1}}\left(1-e^{-\frac{x^{2}}{2}\frac{V_{0,k+1}}{k!}\left(\frac{x_{0}}{2\pi^{2}}\right)^{k-1}}\right)>0.
\]

\end{cor}

\begin{rem}
It is also not too difficult to show that 2. in Corollary \ref{cor:systole-type}
can be changed to \textbf{exactly }$k$ cusps. Indeed, this follows
from an analogue of Theorem \ref{thm:main-thm-2} for the random vector
\[
\left(N_{n,c_{1},[a_{1},b_{1}]}(X),\ldots,N_{n,c_{\ell},[a_{\ell},b_{\ell}]}(X),N_{n,\geq\max(c_{i})+1,[a_{\ell+1},b_{\ell+1}]}(X)\right){}_{n\geq3},
\]
where $N_{n,\geq c,[a,b]}(X)$ counts the number of primitive geodesics
with lengths in $\left[\frac{a}{\sqrt{n}},\frac{b}{\sqrt{n}}\right]$
that separate off at least $c$ cusps. 
\end{rem}

Understanding the distribution of closed geodesics on a surface also
offers insight into its spectral properties. In Section \ref{sec:Spectrum}
we demonstrate the existence of many short closed geodesics on typical
hyperbolic punctured spheres that each separate off two distinct cusps
from the surface. The existence of these curves, when combined with
the Mini-max Lemma \ref{lem:Minimax} for the Laplacian, and an argument
similar to Buser \cite[Theorem 8.1.3]{Bu2010}, can be used to deduce
the existence of $o\left(n\right)$ arbitrarily small eigenvalues
on a typical hyperbolic punctured sphere. Recall, that the spectrum
of a hyperbolic punctured sphere consists of absolutely continuous
spectrum in the range $\left[\frac{1}{4},\infty\right)$, a simple
eigenvalue at $0$, possibly finitely many eigenvalues in the range
$\left(0,\frac{1}{4}\right)$ and potentially embedded eigenvalues
above $\frac{1}{4}$. Then, let $\lambda_{k}\left(X\right)$ denote
the $(k+1)^{\mathrm{th}}$ smallest eigenvalue of the Laplacian on
$X\in\mathcal{M}_{0,n}$ $\textit{if it exists}$. A Theorem of Zograf
\cite{Zo1987} says that there is a constant $C>0$ such that for
any $X\in\mathcal{M}_{g,n}$, $\lambda_{1}\left(X\right)\leqslant C\frac{g+1}{n}$.
 In particular, if $g=o(n)$ then any surface in $\mathcal{M}_{g,n}$
has a small eigenvalue. Our next result complements this by showing
a random surface has many small eigenvalues. We prove the following.
\begin{thm}
\label{thm:small-eigenvalues}There is a constant $C>0$ such that
for any function $k:\mathbb{N}\to\mathbb{N}$ with $k=o\left(n\right)$
and $k\to\infty$ as $n\to\infty$, then 

\[
\mathbb{P}_{n}\left(\lambda_{k}(X)<C\sqrt{\frac{k}{n}}\right)\to1,
\]
as $n\to\infty$. In particular, for any $\ep>0$, $\mathbb{P}_{n}\left[\lambda_{k}\left(X\right)<\ep\right]\to1$
as $n\to\infty$.
\end{thm}

By work of Ballmann, Mathiesen and Mondal \cite{Ba.Ma.Mo17} we have
that $\lambda_{n-1}(X)>\frac{1}{4}$, in particular Theorem \ref{thm:small-eigenvalues}
says that a random surface with many cusps is not far from saturating
this bound to leading order.
\begin{rem}
Theorem \ref{thm:small-eigenvalues} can be easily extended to surfaces
with fixed genus $g>0$, c.f. Remark \ref{rem:g>0}.
\end{rem}

\subsection{Relations to existing work}

It is worthwhile to compare the results obtained here with existing
literature in the large genus and mixed large genus and large cusp
regimes. For closed hyperbolic surfaces of genus $g$, Mirzakhani
and Petri \cite{Mi.Pe19} have also obtained Poissonian statistics
for the distributions of closed geodesics in the regime $g\to\infty$,
and their work is the main inspiration for our investigation here.
More precisely, they consider the random variables $N_{g,[a,b]}(X):(\mathcal{M}_{g},\mathbb{P}_{g})\to\mathbb{N}$,
where $\mathbb{P}_{g}$ is the associated Weil-Petersson probability
measure, which count the number of primitive closed geodesics on the
surface $X$ with lengths in the interval $[a,b]$. In the $g\to\infty$
limit, they show that these random variables converge in distribution
to a Poisson distributed random variable with mean

\[
\lambda_{[a,b]}^{'}=\int_{a}^{b}\frac{e^{t}+e^{-t}-2}{2t}\mathrm{d}t.
\]

\begin{rem}
We note that $\frac{e^{t}+e^{-t}-2}{2t}=\frac{2\sinh^{2}(\frac{t}{2})}{t}=\frac{t}{2}+O(t^{3}),$
and so for small $a$ and $b$ the integral gives a leading order
of $\frac{b^{2}-a^{2}}{4}$ akin to the constant arising in Theorem
\ref{thm:main-thm}. 

They use this to show that the limit of the expected systole length
is $\approx1.61498...$. This is in contrast to the hyperbolic punctured
sphere setting that we study here, where the systole is on scales
of order $n^{-\frac{1}{2}}$. In fact, in the large genus limit, Mirzakhani
\cite[Theorem 4.2]{Mi2013} showed that for $\varepsilon>0$ sufficiently
small, the Weil-Petersson probability of a closed hyperbolic surface
having systole smaller than $\varepsilon$ is proportional to $\varepsilon^{2}$.
Moreover, if one considers the length of the separating systole, that
is, the shortest closed geodesic that separates the surface, its expected
size is $2\log(g)$ in the large genus regime by the work of Parlier,
Wu and Xue \cite{Pa.Wu.Xu21}. Note, in the setting we consider here,
the systole is always separating. 

In addition to this, Nie, Wu and Xue showed in \cite{Ni.Wu.Xu20}
that with probability tending to $1$ as $g\to\infty$, the separating
systole on $X\in\mathcal{M}_{g}$ separates $X$ into $\Sigma_{1,1}\cup\Sigma_{g-1,1}$.
In contrast, Corollary \ref{cor:systole-type} demonstrates that there
is not an analogue of this result for the hyperbolic punctured sphere
setting. Indeed, with positive probability as $n\to\infty$, a hyperbolic
surface has only separating systoles that cut off at least $k$ cusps
for any $k\geq2$.
\end{rem}

For the spectral consequences, there is a stark contrast between the
cusp and genus limits. Indeed, in the closed hyperbolic surface setting,
it is conjectured that typical surfaces (with respect to a reasonable
probability model such as the Weil-Petersson model) should have no
small, non-trivial eigenvalues. More precisely, Wright makes the conjecture
\cite[Problem 10.4]{Wr2020} that for any $\varepsilon>0$,

\begin{equation}
\lim_{g\to\infty}\mathbb{P}_{g}\left(X:\lambda_{1}(X)\geq\frac{1}{4}-\varepsilon\right)=1,\label{eq:max-spec-gap}
\end{equation}
where $\lambda_{1}(X)$ is the first non-zero eigenvalue of the Laplacian
on $X$. The current state of the art for this is $\frac{1}{4}$ replaced
by $\frac{2}{9}$ in (\ref{eq:max-spec-gap}) above obtained by Anantharaman
and Monk\cite{An.Mo2023} . See also the work of Wu and Xue \cite{Wu.Xu2022}
and Lipnowski and Wright \cite{Li.Wr21} where $\frac{1}{4}$ is replaced
by $\frac{3}{16}$. These results proceed the earlier work of Magee,
Naud and Puder \cite{Ma.Na.Pu2022} who obtain a relative spectral
gap of size $\frac{3}{16}$ in a random covering probability model.
More precisely, given a compact hyperbolic surface, one can consider
a degree $d$ Riemannian covering uniformly at random. Their result
then states that for any $\varepsilon>0$, a covering will have no
new eigenvalues from those on the base surface in the interval $[0,\frac{3}{16}-\varepsilon]$
with probability tending to 1 as $d\to\infty$.

In the non-compact setting, Magee and the first named author \cite{Hi.Ma22}
recently showed that in the same random covering model, this relative
spectral gap can be improved to the interval $[0,\frac{1}{4}-\varepsilon]$.
In particular, via a compactification procedure, they demonstrated
the existence of a sequence of closed hyperbolic surfaces with genus
tending to $\infty$ with $\lambda_{1}\to\frac{1}{4}$.

In the mixed regime of genus $g\to\infty$ and number of cusps $n=O(g^{\alpha})$
for $0\leqslant\alpha<\frac{1}{2}$, the first named author \cite{Hi22}
demonstrated in the Weil-Petersson model that with probability tending
to $1$ as $g\to\infty$, a surface has an explicit spectral gap with
size dependent upon $\alpha$. Moving past the $n=o(g^{\frac{1}{2}})$
threshold, Shen and Wu \cite{Sh.Wu22} showed that for $g^{\frac{1}{2}+\delta}\ll n\ll g$
and any $\varepsilon>0$, the Weil-Petersson probability of $\lambda_{1}$
being less than $\varepsilon$ tends to one as $g\to\infty$. For
the scale $n\asymp g^{\frac{1}{2}}$ as $g\to\infty$, Shen and Wu
\cite[Theorem 2]{Sh.Wu22} determine that for any $\varepsilon>0$,
$\lambda_{1}<\varepsilon$ with positive probability as $g\to\infty$.
Theorem \ref{thm:small-eigenvalues} complements these results by
providing information about $o\left(n\right)$ eigenvalues. 

Other related work in the Weil-Petersson model includes the study
of Laplacian eigenfunctions \cite{Gi.Le.Sa.Th21,Th2020}, quantum
ergodicity \cite{Le.Sa17,Le.Sa20}, local Weyl law \cite{Mo2020},
Gaussian Orthogonal Ensemble energy statistics \cite{Ru22} (see \cite{Na2022}
in the case of random covers), and prime geodesic theorem error estimates
\cite{Wu.Xu22a}. See also \cite{Gu.Pa.Yo11} and the references therein
for results concerning the lengths of boundaries of pants decompositions
for random surfaces.

\section{Background\label{sec:background}}

In this section we introduce the necessary background on moduli space,
the Weil-Petersson metric and Mirzakhani's integration formula. One
can see \cite{Wr2020} for a nice exposition of these topics.

\subsection{Moduli space }

Let $\Sigma_{g,c,d}$ denote a topological surface with genus $g$,
$c$ labeled punctures and $d$ labeled boundary components where
$2g+n+d\geqslant3$. A marked surface of signature $\left(g,c,d\right)$
is a pair $\left(X,\varphi\right)$ where $X$ is a hyperbolic surface
and $\varphi:\Sigma_{g,c,d}\to X$ is a homeomorphism. Given $\left(l_{1},...,l_{d}\right)\in\mathbb{R}_{>0}^{d}$,
we define the Teichmüller space $\mathcal{T}_{g,c+d}\left(l_{1},\dots,l_{d}\right)$
by
\begin{align*}
\mathcal{T}_{g,c,d}\left(l_{1},...,l_{d}\right) & \stackrel{\text{def}}{=}\left\{ \substack{\text{\text{Marked surfaces} }\left(X,\varphi\right)\text{ of signature }\left(g,c,d\right)\\
\text{with labeled totally geodesic boundary components }\\
\left(\beta_{1},\dots,\beta_{d}\right)\text{ with lengths }\left(l_{1},\dots,l_{d}\right)
}
\right\} /\sim,
\end{align*}
where $\left(X_{1},\varphi_{1}\right)\sim\left(X_{2},\varphi_{2}\right)$
if and only if there exists an isometry $m:X_{1}\to X_{2}$ such that
$\varphi_{2}$ and $m\circ\varphi_{1}$ are isotopic. Let $\text{Homeo}^{+}\left(\Sigma_{g,c,d}\right)$
denote the group of orientation preserving homeomorphisms of $\Sigma_{g,c,d}$
which leave every boundary component setwise fixed and do not permute
the punctures. Let $\text{Homeo}_{0}^{+}\left(\Sigma_{g,c,d}\right)$
denote the subgroup of homeomorphisms isotopic to the identity. The
mapping class group is defined as 
\[
\text{MCG}_{g,c,d}\stackrel{\text{def}}{=}\text{Homeo}^{+}\left(\Sigma_{g,c,d}\right)/\text{Homeo}_{0}^{+}\left(\Sigma_{g,c,d}\right).
\]
$\text{Homeo}^{+}\left(\Sigma_{g,c,d}\right)$ acts on $\mathcal{T}_{g,c,d}\left(l_{1},...,l_{d}\right)$
by pre-composition of the marking, and $\text{Homeo}_{0}^{+}\left(\Sigma_{g,c,d}\right)$
acts trivially, hence $\text{MCG}_{g,c,d}$ acts on $\mathcal{T}_{g,c,d}\left(l_{1},...,l_{d}\right)$
and we define the moduli space $\mathcal{M}_{g,c,d}\left(l_{1},...,l_{d}\right)$
by 
\[
\mathcal{M}_{g,c,d}\left(l_{1},...,l_{d}\right)\stackrel{\text{def}}{=}\mathcal{T}_{g,c,d}\left(l_{1},...,l_{d}\right)/\text{MCG}_{g,c,d}.
\]
By convention, a geodesic of length $0$ is a cusp and we suppress
the distinction between punctures and boundary components in our notation
by allowing $l_{i}\geqslant0$. In particular,

\[
\mathcal{M}_{g,c+d}=\mathcal{M}_{g,c,d}\left(0,\dots,0\right).
\]
Throughout the sequel we shall restrict our study to the case that
$g=0$.

\subsection{Weil-Petersson metric}

For $\boldsymbol{l}=\left(l_{1},\dots,l_{n}\right)$ with $l_{i}\geqslant0$
for $1\leqslant i\leqslant n$, the space $\mathcal{T}_{g,n}\left(\boldsymbol{l}\right)$
carries a natural symplectic structure known as the Weil-Petersson
symplectic form and is denoted by $\omega_{WP}$ \cite{Go1984}. It
is invariant under the action of the mapping class group and descends
to a symplectic form on $\mathcal{M}_{g,n}\left(\boldsymbol{l}\right)$.
The form $\omega_{WP}$ induces the volume form
\[
\text{dVol}_{WP}\stackrel{\text{def}}{=}\frac{1}{\left(3g-3+n\right)!}\bigwedge_{i=1}^{3g-3+n}\omega_{WP},
\]
which is also invariant under the action of the mapping class group
and descends to a volume form on $\mathcal{M}_{g,n}\left(\boldsymbol{l}\right)$.
By a theorem of Wolpert \cite{Wo1981}, this volume form can be made
explicit in terms of Fenchel-Nielsen coordinates. We write $\mathrm{d}X$
as shorthand for $\text{dVol}_{WP}$. We let $V_{g,n}\left(\boldsymbol{l}\right)$
denote $\text{Vol}_{WP}\left(\mathcal{M}_{g,n}\left(\boldsymbol{l}\right)\right)$,
the total volume of $\mathcal{M}_{g,n}\left(\boldsymbol{l}\right)$,
which is finite. We write $V_{g,n}$ to denote $V_{g,n}\left(\boldsymbol{0}\right)$
and since we shall only consider the case $g=0$ in this article,
we shall often write $V_{n}\eqdf V_{0,n}$.

We define the Weil-Petersson probability measure on $\mathcal{M}_{0,n}$
by normalizing $\text{dVol}_{WP}$. Indeed, for any Borel subset $\mathcal{B\subseteq\mathcal{M}}_{0,n}$,
\[
\mathbb{P}_{n}\left[\mathcal{B}\right]\stackrel{\text{def}}{=}\frac{1}{V_{n}}\int_{\mathcal{M}_{0,n}}\ind_{\mathcal{B}}\mathrm{d}X.
\]
 We write $\mathbb{E}_{n}$ to denote expectation with respect to
$\mathbb{P}_{n}$.

\subsection{Mirzakhani's integration formula}

We define a $k$-multicurve to be an ordered $k$-tuple $\Gamma=\left(\gamma_{1},...,\gamma_{k}\right)$
of disjoint non-homotopic non-peripheral simple closed curves on $\Sigma_{0,n}$
and we write $\left[\Gamma\right]$$=\left[\gamma_{1},...,\gamma_{k}\right]$
to denote its homotopy class. The mapping class group $\text{MCG}_{0,n}$
acts on homotopy classes of multicurves and we denote the orbit containing
$\left[\Gamma\right]$ by 
\[
\mathcal{O}_{\Gamma}=\left\{ \left(g\cdot\gamma_{1},...,g\cdot\gamma_{k}\right)\mid g\in\text{MCG}_{0,n}\right\} .
\]
Given a simple, non-peripheral closed curve $\gamma$ on $\Sigma_{0,n}$,
for $\left(X,\varphi\right)\in\mathcal{T}_{0,n}$ we define $\ell_{\gamma}\left(X\right)$
to be the length of the unique geodesic in the free homotopy class
of $\varphi\left(\gamma\right)$. Then given a function $f:\mathbb{R}_{\geqslant0}^{k}\to\mathbb{R}_{\geqslant0}$,
for $X\in\mathcal{M}_{0,n}$ we define 
\[
f^{\Gamma}\left(X\right)\eqdf\sum_{\left(\alpha_{1},...,\alpha_{k}\right)\in\mathcal{O}_{\Gamma}}f\left(\ell_{\alpha_{1}}\left(X\right),...,\ell_{\alpha_{k}}\left(X\right)\right),
\]
which is well defined on $\mathcal{M}_{0,n}$ since we sum over the
orbit $\mathcal{O}_{\Gamma}$. Let $\Sigma_{n}\left(\Gamma\right)$
denote the result of cutting the surface $\Sigma_{0,n}$ along $\left(\gamma_{1},...,\gamma_{k}\right)$,
then $\Sigma_{n}\left(\Gamma\right)=\sqcup_{i=1}^{s}\Sigma_{0,c_{i},d_{i}}$
for some $\left\{ \left(c_{i},d_{i}\right)\right\} _{i=1}^{s}$. Each
$\gamma_{i}$ gives rise to two boundary components $\gamma_{i}^{1}$
and $\gamma_{i}^{2}$ of $\Sigma_{n}\left(\Gamma\right)$. Given $\boldsymbol{x}=\left(x_{1},...,x_{k}\right)$,
let $\boldsymbol{x}^{(i)}$ denote the tuple of coordinates $x_{j}$
of $\boldsymbol{x}$ such that $\gamma_{j}$ is a boundary component
of $\Sigma_{c_{i},d_{i}}$. We define 
\begin{align*}
V_{n}\left(\Gamma,\underline{x}\right)\stackrel{\text{def}}{=} & \prod_{i=1}^{s}V_{c_{i}+d_{i}}\left(\boldsymbol{x}^{(i)}\right).
\end{align*}
We can now state Mirzakhani's integration formula.
\begin{thm}[{Mirzakhani's Integration Formula \cite[Theorem 7.1]{Mi2007}}]
\label{thm:MIF}Given a $k$-multicurve $\Gamma=\left(\gamma_{1},\dots,\gamma_{k}\right)$,
\[
\int_{\mathcal{M}_{0,n}}f^{\Gamma}\left(X\right)dX=\int_{\mathbb{R}_{\geqslant0}^{k}}f\left(x_{1},...,x_{k}\right)V_{n}\left(\Gamma,\underline{x}\right)x_{1}\cdots x_{k}\mathrm{d}x_{1}\cdots\mathrm{d}x_{k}.
\]
\end{thm}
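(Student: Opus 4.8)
The plan is to carry out the standard ``unfolding'' argument, which trades the orbit sum defining $f^{\Gamma}$ on $\mathcal{M}_{0,n}$ for a single term on an intermediate cover and then evaluates the resulting integral in Fenchel--Nielsen coordinates adapted to $\Gamma$. First I would introduce the stabilizer $\mathrm{MCG}_{0,n}^{\Gamma}\leqslant\mathrm{MCG}_{0,n}$ of the homotopy class $[\Gamma]=[\gamma_1,\dots,\gamma_k]$ (the mapping classes fixing each $\gamma_i$ up to isotopy), set $\mathcal{M}^{\Gamma}\eqdf\mathcal{T}_{0,n}/\mathrm{MCG}_{0,n}^{\Gamma}$, and equip it with the volume form $\pi^{*}(\mathrm{dVol}_{WP})$ pulled back along the natural covering $\pi\colon\mathcal{M}^{\Gamma}\to\mathcal{M}_{0,n}$. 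The fibre of $\pi$ over $[X]$ is in bijection with $\mathcal{O}_{\Gamma}$ via $g\,\mathrm{MCG}_{0,n}^{\Gamma}\mapsto(g\gamma_1,\dots,g\gamma_k)$, and since the stabilizer fixes every $\gamma_i$ the functions $\ell_{\gamma_1},\dots,\ell_{\gamma_k}$ descend to $\mathcal{M}^{\Gamma}$. The first step is then the tautology (legitimate by Tonelli, as $f\geqslant0$)
\[
\int_{\mathcal{M}_{0,n}}f^{\Gamma}(X)\,dX=\int_{\mathcal{M}^{\Gamma}}f\bigl(\ell_{\gamma_1}(Y),\dots,\ell_{\gamma_k}(Y)\bigr)\,dY,
\]
obtained by summing the right-hand integrand over each fibre of $\pi$.

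Next I would complete $(\gamma_1,\dots,\gamma_k)$ to a pants decomposition of $\Sigma_{0,n}$ and use the associated Fenchel--Nielsen coordinates, organised as the length--twist pairs $(\ell_{\gamma_i},\tau_{\gamma_i})_{i=1}^{k}$ together with coordinates that, after fixing $x_j\eqdf\ell_{\gamma_j}$, are exactly Fenchel--Nielsen coordinates for the Teichm\"uller spaces of the cut pieces $\Sigma_n(\Gamma)=\bigsqcup_{i=1}^{s}\Sigma_{0,c_i,d_i}$ with boundary lengths $\boldsymbol{x}^{(i)}$. Descending to $\mathcal{M}^{\Gamma}$ means quotienting by $\mathrm{MCG}_{0,n}^{\Gamma}$: the Dehn twists $T_{\gamma_i}$ lie in the stabilizer and act by $\tau_{\gamma_i}\mapsto\tau_{\gamma_i}+x_i$ while fixing all other coordinates, so they collapse each twist line to a circle of circumference $x_i$, and the remainder of $\mathrm{MCG}_{0,n}^{\Gamma}$ acts only on the cut pieces, realising the product of their (orbifold) mapping class groups. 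Up to a null set this presents $\mathcal{M}^{\Gamma}$ as fibred over $\{\boldsymbol{x}\in\mathbb{R}_{>0}^{k}\}$ with fibre over $\boldsymbol{x}$ measure-isomorphic to $\bigl(\prod_{i=1}^{k}\mathbb{R}/x_i\mathbb{Z}\bigr)\times\prod_{i=1}^{s}\mathcal{M}_{0,c_i,d_i}(\boldsymbol{x}^{(i)})$.

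Finally I would invoke Wolpert's theorem, cited above, that in Fenchel--Nielsen coordinates $\mathrm{dVol}_{WP}=\bigwedge_{j}d\ell_{\gamma_j}\wedge d\tau_{\gamma_j}$, so that in the adapted coordinates $dY=\bigl(\prod_{i=1}^{k}d\ell_{\gamma_i}\,d\tau_{\gamma_i}\bigr)\wedge(\text{Weil--Petersson forms of the pieces})$. Integrating out the $k$ twist circles produces the factor $x_1\cdots x_k$, integrating over the pieces produces $\prod_{i=1}^{s}V_{c_i,d_i}(\boldsymbol{x}^{(i)})=V_n(\Gamma,\underline{x})$, and Fubini in the remaining variables $\boldsymbol{x}$ gives
\[
\int_{\mathcal{M}^{\Gamma}}f(\ell_{\gamma_1},\dots,\ell_{\gamma_k})\,dY=\int_{\mathbb{R}_{\geqslant0}^{k}}f(x_1,\dots,x_k)\,V_n(\Gamma,\underline{x})\,x_1\cdots x_k\,\mathrm{d}x_1\cdots\mathrm{d}x_k,
\]
which is the claim. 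The step I expect to be the main obstacle is the middle one: the precise bookkeeping of $\mathrm{MCG}_{0,n}^{\Gamma}$, namely splitting off the rank-$k$ Dehn-twist subgroup so the twist factors come out with circumference exactly $x_i$, verifying that the complementary action is the full orbifold mapping class group of $\bigsqcup_i\Sigma_{0,c_i,d_i}$, and checking that in the labelled genus-zero setting no further symmetry factor (such as an interchange of the two sides of some $\gamma_i$) intervenes, so that the fibre volume is exactly $x_1\cdots x_k\,V_n(\Gamma,\underline{x})$.
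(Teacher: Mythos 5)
The paper offers no proof of this statement—it is quoted directly from Mirzakhani \cite[Theorem 7.1]{Mi07a}—and your unfolding argument (passing to the intermediate cover $\mathcal{T}_{0,n}/\mathrm{MCG}_{0,n}^{\Gamma}$, applying Wolpert's theorem in Fenchel--Nielsen coordinates adapted to $\Gamma$, integrating out the twist circles to produce $x_{1}\cdots x_{k}$, and identifying the remaining fibre with $\prod_{i}\mathcal{M}_{0,c_{i},d_{i}}(\boldsymbol{x}^{(i)})$) is exactly the argument of that cited source, correctly reproduced. Your closing check is also the right one: in the labelled genus-zero, ordered-tuple setting no one-holed-torus pieces or side/piece interchanges can occur (every curve separates and punctures are fixed individually), so the usual $2^{-M(\Gamma)}$ and symmetry factors are trivial and the formula holds with constant $1$ as stated.
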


\subsection{Volume bounds}

Finally we state some results on Weil-Petersson volumes of moduli
space which we will need later on. We make essential use of the following
asymptotic for $V_{0,n}$ due to Manin and Zograf \cite[Theorem 6.1]{Ma.Zo00}.

\begin{thm}
\label{thm:ZografManin}There exists a constant $B_{0}$ such that
as $n\to\infty$, 
\begin{align*}
V_{n} & =(2\pi^{2})^{n-3}n!(n+1)^{-\frac{7}{2}}x_{0}^{-n}\left(B_{0}+O\left(\frac{1}{n}\right)\right),
\end{align*}
where $x_{0}=-\frac{1}{2}j_{0}J_{0}'(j_{0})$ and $J_{0}$ is the
Bessel function and $j_{0}$ is the first positive zero of $J_{0}$.
\end{thm}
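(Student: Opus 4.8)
The plan is to prove this (Theorem 6.1 of Manin--Zograf \cite{Ma.Zo00}) by singularity analysis of a generating function for the volumes $V_{0,n}$. First I would package the volumes into $\Phi(x) \eqdf \sum_{n\geq 3}\frac{V_{0,n}}{n!}x^{n}$. The genus--zero case of Mirzakhani's topological recursion (equivalently, Zograf's recursion for Weil--Petersson volumes together with the string and dilaton equations) turns into a differential equation for $\Phi$ whose solution admits a closed form: up to the normalization fixed by $V_{0,3}=1$, the second derivative $u(x)\eqdf\Phi''(x)$ is the analytic functional inverse near $x=0$ of the entire function
\[
\psi(u)\eqdf\sqrt{u}\,J_{1}(2\sqrt{u})=\sum_{k\geq1}\frac{(-1)^{k-1}}{(k-1)!\,k!}u^{k},
\]
and the identity $\frac{d}{dz}\left(zJ_{1}(z)\right)=zJ_{0}(z)$ gives the key relation $\psi'(u)=J_{0}(2\sqrt{u})$.

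Next I would locate the dominant singularity of $u(x)$. Being the inverse of $\psi$, the function $u$ is analytic wherever $\psi'\neq0$ and picks up a branch point at $x_{0}=\psi(u_{0})$ for the smallest positive $u_{0}$ with $\psi'(u_{0})=0$; by the relation above this is $J_{0}(2\sqrt{u_{0}})=0$, so $u_{0}=j_{0}^{2}/4$ and $x_{0}=\psi(u_{0})=\tfrac12 j_{0}J_{1}(j_{0})=-\tfrac12 j_{0}J_{0}'(j_{0})$, exactly the constant in the statement. Since $V_{0,n}\geq0$, Pringsheim's theorem forces $x_{0}$ to be a singularity of $\Phi$, and a positivity/aperiodicity argument shows it is the only one on $|x|=x_{0}$; as $\psi$ is entire, $u$ and hence $\Phi$ continue analytically into a $\Delta$-domain at $x_{0}$. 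Near $u_{0}$ one has $\psi(u)-x_{0}=\tfrac12\psi''(u_{0})(u-u_{0})^{2}+O((u-u_{0})^{3})$ with $\psi''(u_{0})=-2J_{1}(j_{0})/j_{0}\neq0$, so $u(x)=u_{0}-c\sqrt{x_{0}-x}+\cdots$ with $c=\sqrt{j_{0}/J_{1}(j_{0})}$; integrating twice, $\Phi$ has the singular expansion $\Phi(x)=(\text{analytic at }x_{0})+B(x_{0}-x)^{5/2}+O((x_{0}-x)^{7/2})$.

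Then I would invoke the transfer theorem of Flajolet--Odlyzko: a function with such a $\Delta$-analytic expansion satisfies
\[
[x^{n}]\Phi(x)=\frac{B\,x_{0}^{5/2-n}}{\Gamma(-5/2)}\,n^{-7/2}\left(1+O\!\left(\tfrac1n\right)\right),
\]
and since $V_{0,n}=n!\,[x^{n}]\Phi(x)$ and $(n+1)^{-7/2}=n^{-7/2}(1+O(1/n))$, this yields $V_{0,n}=n!\,(n+1)^{-7/2}x_{0}^{-n}\left(B_{0}+O(1/n)\right)$ with $B_{0}=Bx_{0}^{5/2}/\Gamma(-5/2)>0$.

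The main obstacle is the first step: deriving the exact Bessel closed form for the volume generating function from the recursion, i.e.\ showing that $\Phi''$ is the inverse of $\sqrt{u}\,J_{1}(2\sqrt{u})$. That is the substantive algebraic input (carried out by Zograf and Manin--Zograf), and everything downstream --- verifying the $\Delta$-domain continuation (analyticity away from the critical points of $\psi$, uniqueness of the dominant singularity) and tracking the normalization so that the power of $(n+1)$ and the value of $B_{0}$ come out correctly --- is routine singularity analysis. In the present paper, since only the stated asymptotic (and the derived constant $\sum_{i\geq2}\frac{V_{0,i+1}}{i!}x_{0}^{i-1}$, which is $\psi'$-related data at $x_0$) is needed, it suffices to cite \cite{Ma.Zo00} for this theorem.
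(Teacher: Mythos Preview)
The paper does not prove this theorem; it is quoted from Manin--Zograf \cite[Theorem 6.1]{Ma.Zo00} as a black box, and you correctly note at the end that citation suffices. Your sketch of the underlying argument---packaging the volumes into $\Phi(x)=\sum_{n\geq3}\frac{V_{0,n}}{n!}x^{n}$, identifying $\Phi''$ as the inverse of $\psi(u)=\sqrt{u}\,J_{1}(2\sqrt{u})$, locating the dominant square-root singularity at $x_{0}=\psi(j_{0}^{2}/4)$, and applying Flajolet--Odlyzko transfer---is a faithful outline of the Manin--Zograf proof, and the paper in fact uses exactly this inverse-function description of $\Phi''$ later when computing the constant $\alpha$ in the proof of Theorem~\ref{thm:main-thm}.
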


The following bound of Mirzakhani \cite[Lemma 3.2(3)]{Mi2013} is
needed for comparing moduli space volumes with differing cusps.
\begin{lem}
\label{lem:mirzakhani-vol-comparison}There exist constants $\gamma_{1},\gamma_{2}>0$
such that for any $n\geq3$,
\[
\gamma_{1}\frac{V_{n+1}}{n-2}\leq V_{n}\leq\gamma_{2}\frac{V_{n+1}}{n-2}.
\]
\end{lem}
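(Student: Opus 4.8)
The plan is to read off the two-sided comparison directly from the Manin--Zograf asymptotic of Theorem~\ref{thm:ZografManin}: this pins down the behaviour of the ratio $R_n\eqdf\frac{(n-2)V_{0,n}}{V_{0,n+1}}$ for large $n$, and the finitely many small values of $n$ get absorbed into the constants. So the first thing I would do is set up $R_n$ and compute its limit.

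First I would record two elementary sign facts. Since $J_0(0)=1$ and $j_0$ is the \emph{first} positive zero of $J_0$, the function $J_0$ is decreasing at $j_0$, so $J_0'(j_0)<0$ and hence $x_0=-\tfrac{1}{2}j_0J_0'(j_0)>0$; and the constant $B_0$ in Theorem~\ref{thm:ZografManin} is strictly positive (this is part of the Manin--Zograf result). Writing $V_{0,n}=n!(n+1)^{-7/2}x_0^{-n}(B_0+O(1/n))$ and taking the quotient, the factors $B_0+O(1/n)$ cancel up to $1+O(1/n)$ (this is exactly where $B_0>0$ is used), giving
\[
R_n=\frac{(n-2)V_{0,n}}{V_{0,n+1}}=(n-2)\cdot\frac{1}{n+1}\cdot\left(\frac{n+2}{n+1}\right)^{7/2}\cdot x_0\cdot\bigl(1+O(1/n)\bigr).
\]
Since $\frac{n-2}{n+1}\to1$ and $\left(\frac{n+2}{n+1}\right)^{7/2}\to1$, we conclude $R_n\to x_0$ as $n\to\infty$.

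Consequently there is $N_0\geq3$ with $\tfrac{x_0}{2}\leq R_n\leq2x_0$ for all $n\geq N_0$. For each of the finitely many remaining values $3\leq n\leq N_0-1$ the number $R_n$ is positive and finite (each $V_{0,n}$ is a positive real and $n-2\geq1$), so setting
\[
\gamma_1\eqdf\min\Bigl\{\tfrac{x_0}{2},\ \min_{3\leq n\leq N_0-1}R_n\Bigr\},\qquad \gamma_2\eqdf\max\Bigl\{2x_0,\ \max_{3\leq n\leq N_0-1}R_n\Bigr\}
\]
yields constants $0<\gamma_1\leq\gamma_2<\infty$ with $\gamma_1\leq R_n\leq\gamma_2$ for every $n\geq3$, which is exactly the asserted inequality $\gamma_1\frac{V_{0,n+1}}{n-2}\leq V_{0,n}\leq\gamma_2\frac{V_{0,n+1}}{n-2}$.

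There is no genuine obstacle here once Theorem~\ref{thm:ZografManin} is granted; the only points needing care are the sign checks on $x_0$ and $B_0$ — needed so that the ratio of the error terms is honestly $1+o(1)$ rather than indeterminate — and the bookkeeping that the asymptotic alone only controls $n\geq N_0$, so the initial range must be handled separately. If one preferred to avoid invoking the full Manin--Zograf asymptotic, the same comparison can be extracted from Mirzakhani's topological recursion expressing $V_{0,n+1}$ through the volumes $V_{0,m}$ with $m\leq n$, but at the cost of a considerably longer argument; for our purposes the asymptotic route is the most economical.
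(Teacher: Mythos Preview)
Your argument is correct. The paper does not actually prove this lemma; it simply quotes it as \cite[Lemma~3.2(3)]{Mi2013}. Mirzakhani's original proof works for all genera and proceeds via her topological recursion for Weil--Petersson volumes together with combinatorial estimates on the intersection numbers, rather than via any asymptotic formula. Your route is genuinely different: you deduce the genus-zero case as an immediate corollary of the Manin--Zograf asymptotic (Theorem~\ref{thm:ZografManin}), which is already available in the paper and logically independent of the lemma. This is much shorter and entirely adequate for the paper's purposes, since only the $g=0$ case is ever used; the trade-off is that it does not extend to positive genus without invoking the corresponding asymptotics there, whereas Mirzakhani's argument is uniform in $g$. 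Your closing remark about the recursion-based alternative is on point and accurately describes the original source.
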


The following well known bounds will be sufficient for our purposes,
e.g. \cite[Proposition 3.1]{Mi.Pe19}.
\begin{lem}
\label{lem:volbds}Suppose that $n\geq3$, then for any $b_{1},\ldots,b_{n}\geq0$,

\[
1\leqslant\frac{V_{0,n}\left(2b_{1},\dots,2b_{n}\right)}{V_{0,n}}\leqslant\prod_{i=1}^{n}\frac{\sinh\left(b_{i}\right)}{b_{i}}.
\]
\end{lem}

\section{Poisson Statistics\label{sec:Poisson-Statistics}}

In this section, we will prove Theorems \ref{thm:main-thm} and \ref{thm:main-thm-2}.
We begin by proving Theorem \ref{thm:main-thm} for $\ell=1$, that
is, for a sequence of random variables; the extension to random vectors
is straightforward, and we outline the necessary changes in subsection
\ref{subsec:main-thm-proof} so that we can drastically simplify indexing
and notation here. Furthermore, many of the same ideas can be used
for the proof of Theorem \ref{thm:main-thm-2} which we will prove
in subsection \ref{subsec:main-thm2-proof}.

To this end, for $0\leq a<b$ and $X\in\mathcal{M}_{0,n}$, recall
that we defined $N_{n,[a,b]}(X)$ to be the number of primitive closed
geodesics on $X$ whose length are in the interval $\left[\frac{a}{\sqrt{n}},\frac{b}{\sqrt{n}}\right]$.
For fixed $a,b$, we will show that in the $n\to\infty$ regime, the
random variables $N_{n,[a,b]}(X)$ converge in distribution to a Poisson
distributed random variable with mean 

\[
\lambda=\frac{b^{2}-a^{2}}{2}\cdot\sum_{i=2}^{\infty}\frac{V_{i+1}}{i!}x_{0}^{i-1}.
\]
We then manipulate this series to obtain the stated constant in terms
of Bessel functions. To demonstrate convergence, we use the method
of factorial moments.
\begin{prop}[Method of factorial moments]
\label{prop:factorial-moments} Let $\ell\in\mathbb{N}$ and suppose
that $(\Omega_{n},\mathbb{\mathbb{P}}_{n})_{n\geq1}$ are a sequence
of probability spaces, and $X_{i,n}:\Omega_{n}\to\mathbb{N}$ are
a sequence of random variables for $i=1,\ldots,\ell$. For $k\in\mathbb{N}$,
we denote by

\[
\left(X_{i,n}\right)_{k}:=X_{i,n}(X_{i,n}-1)\cdots(X_{i,n}-k+1).
\]
Suppose that there exists $\lambda_{i}\in(0,\infty)$ for $i=1,\ldots,\ell$
such that for every $k_{1},\ldots,k_{\ell}\in\mathbb{N}$,

\[
\lim_{n\to\infty}\mathbb{E}_{n}\left[\left(X_{1,n}\right)_{k_{1}}\cdots\left(X_{\ell,n}\right)_{k_{\ell}}\right]=\lambda_{1}^{k_{1}}\cdots\lambda_{\ell}^{k_{\ell}}.
\]
Then, the random vector $(X_{1,n},\ldots,X_{\ell,n})$ converges in
distribution to a random vector of independent Poisson distributed
random variable with parameter $\lambda_{i}$. 
\end{prop}

\noindent We will first apply Proposition \ref{prop:factorial-moments}
in the case that $\ell=1$. Since we are interested in understanding
the expectation of the random variables as $n\to\infty$ for fixed
$0\leq a<b$, we can assume that $n$ is sufficiently large so that
$\frac{b}{\sqrt{n}}<2\mathrm{arcsinh}(1)$. Then $\left(N_{n,[a,b]}(X)\right)_{k}$
is precisely the number of $k$-tuples consisting of distinct, disjoint,
primitive simple closed geodesics on $X$ whose lengths are in the
interval $\left[\frac{a}{\sqrt{n}},\frac{b}{\sqrt{n}}\right]$. Disjointness
of the curves follows from the fact that any two geodesics of length
less than $2\mathrm{arcsinh}(1)$ do not intersect \cite[Theorem 4.1.6]{Bu2010}.

\noindent 
\begin{rem}
\label{rem:mcg-orbits}Consider the mapping class group orbit of an
ordered multicurve. Since the mapping class group fixes the punctures
of the surface, any two disjoint, non-peripheral, simple closed curves
on $\Sigma_{0,n}$ will be in the same mapping class group orbit if
and only if they separate the same punctures from the surface. Moreover,
two ordered\textbf{ }multicurves are in the same mapping class group
orbit if and only if their corresponding curve components each separate
the surface with the same topological decomposition and the punctures
of $\Sigma_{0,n}$ that are on the subsurfaces are the same. 
\end{rem}

We will separate out the ordered multicurves that we consider into
two types: nested multicurves and unnested multicurves. 
\begin{defn}
\label{def:multicurve-types}A \textbf{nested multicurve} is an ordered
multicurve $\Gamma=(\gamma_{1},\ldots,\gamma_{k})$ on $\Sigma_{0,n}$
consisting of distinct, disjoint, non-peripheral simple closed curves
such that for some $i$, the two subsurfaces in the disconnected surface
$\Sigma_{0,n}\setminus\gamma_{i}$ each contain at least one of the
remaining multicurve components $\gamma_{j}$ in their interior. An
\textbf{unnested multicurve }is an ordered multicurve that is not
nested. In other words, it is an ordered multicurve $\Gamma=(\gamma_{1},\ldots,\gamma_{k})$
on $\Sigma_{0,n}$ consisting of distinct, disjoint, non-peripheral
simple closed curves such that for every $i$, one of the subsurfaces
in the disconnected surface $\Sigma_{0,n}\setminus\gamma_{i}$ contains
all of the other multicurve components in its interior.
\end{defn}

We will write 
\begin{equation}
\left(N_{n,[a,b]}(X)\right)_{k}=N_{n,[a,b]}^{N,k}(X)+N_{n,[a,b]}^{U,k}(X),\label{eq:nested + unnested}
\end{equation}
where $N_{n,[a,b]}^{N,k}(X)$ counts the nested $k$-multicurves and
$N_{n,[a,b]}^{U,k}(X)$ counts the unnested $k$-multicurves. We will
see that the main contribution to the expectation arises from the
unnested multicurves.

\subsection{Contribution of nested multicurves}

\label{subsec:nested}

Recall from Section \ref{sec:background} that $\Sigma_{n}$ denotes
a topological surface with $n$ punctures labeled with the alphabet
$\{1,\ldots,n\}$. More generally, $\Sigma_{g,c,d}$ denotes a topological
surface with genus $g$, $c$ labeled punctures and $d$ labeled boundaries.

Given a nested ordered $k$-multicurve $\Gamma$ on $\Sigma_{n}$,
we associate an unordered collection of $k+1$ triples $\left\{ \left(c_{i},d_{i},\left\{ a_{1}^{i},\ldots,a_{c_{i}}^{i}\right\} \right)\right\} _{i=1}^{k+1}$
where 
\begin{enumerate}
\item $\sum_{i=1}^{k+1}c_{i}=n$, $c_{i}\geq0,$
\item $\sum_{i=1}^{k+1}d_{i}=2k$, $1\leq d_{i}<k,$
\item $c_{i}+d_{i}\geq3,$
\item $\left\{ a_{1}^{i},\ldots,a_{c_{i}}^{i}\right\} \subseteq\{1,\ldots,n\}$
are pairwise disjoint and their union over all $i$ is $\{1,\ldots,n\}$,
\item Cutting $\Sigma_{n}$ along the multicurve $\Gamma$ gives rise to
\end{enumerate}
\[
\Sigma_{n}\setminus\Gamma=\bigsqcup_{i=1}^{k+1}\Sigma_{0,c_{i},d_{i},}
\]
where $\Sigma_{0,c_{i},d_{i}}$ has the labels $\left\{ a_{1}^{i},\ldots,a_{c_{i}}^{i}\right\} $
on its punctures inherited from the puncture labels on $\Sigma_{n}$.
Indeed, the decomposition is obtained by recording the puncture and
boundary numbers on the subsurfaces in the fifth condition along with
the corresponding labels of the punctures.

Under such a cutting for the resulting tuples, condition 1 is immediate
since any subsurface has at least zero punctures and the sum of the
number of punctures of each subsurface is precisely $n$ since the
subsurfaces glue back together to give $\Sigma_{n}$. 

Each component of $\Gamma$ gives rise to exactly two boundary components
in the decomposition so that the sum of the boundaries is $2k$. Moreover,
each subsurface has at least one boundary obtained from the multicurve
component that separates it from the surface. The number of boundary
components on each subsurface is strictly less than $k$ precisely
because the multicurve is nested. Indeed, if one of the subsurfaces
has $k$ boundaries, then by $\sum_{i=1}^{k+1}d_{i}=2k$, each of
the other $k$ subsurfaces has precisely one boundary corresponding
to the $k$ components in $\Gamma$. This means that cutting $\Sigma_{n}$
by a curve component of $\Gamma$ gives two subsurfaces, one of which
corresponds to a subsurface in the decomposition with one boundary
component. This subsurface hence does not contain any other multicurve
components and thus the multicurve is unnested. Put together, these
observations mean that condition 2 holds for the collection of tuples
arising from $\Gamma$. 

Condition 3 holds since otherwise in the cut surface there would be
a component with exactly one boundary and one puncture or with two
boundary components and no punctures. In the first case, the curve
component in the multicurve that corresponds to this boundary component
homotopes down to the puncture which is a contradiction because the
curves are non-peripheral. In the second case, two curve components
must bound an annulus which contradicts the assumption that the curve
components are non-homotopic.

Denote the collection of unordered triples satisfying the conditions
(1)-(4) by $\mathcal{\tilde{A}}_{k}$.

\begin{lem}
\label{lem:nested-curve-count}A collection $\left\{ \left(c_{i},d_{i},\left\{ a_{1}^{i},\ldots,a_{c_{i}}^{i}\right\} \right)\right\} _{i=1}^{k+1}\in\mathcal{\tilde{A}}_{k}$
corresponds to exactly $f(d_{1},\ldots,d_{k+1})$ mapping class group
orbits of nested multicurves for some function $f(d_{1},\ldots,d_{k+1})\leq(2k-1)!!k!$.
\end{lem}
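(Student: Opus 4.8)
The plan is to show that the data in $\mathcal{\tilde{A}}_k$ determines a nested multicurve up to the mapping class group \emph{as an unordered set of curves}, and that each such unordered set admits exactly $k!$ orderings, all of which lie in distinct $\mathrm{MCG}$-orbits as \emph{ordered} multicurves. Combining these two facts gives the count of $k!$.

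First I would recall from Remark \ref{rem:mcg-orbits} that two (ordered) multicurves on $\Sigma_{0,n}$ lie in the same $\mathrm{MCG}_{0,n}$-orbit if and only if their complementary pieces realize the same topological decomposition with the same puncture labels distributed among the pieces. The key point is that for a \emph{nested} multicurve, the dual graph of the decomposition is a tree (each $\gamma_i$ is separating, and the nesting condition—equivalently condition 2, that no piece has $k$ boundary components—forces this tree to be something other than a star, but in any case it is a tree on $k+1$ vertices with $k$ edges). The data $\left\{(c_i,d_i,\{a_1^i,\ldots,a_{c_i}^i\})\right\}_{i=1}^{k+1}$ records the vertices of this tree together with how many edges meet each vertex and which punctures sit on it, but it does \emph{not} record the tree structure (which vertex is adjacent to which). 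So the first sub-step is: given a collection in $\mathcal{\tilde A}_k$, reconstruct the tree. I would argue that in a genus-zero decomposition the adjacency is forced: since $\Sigma_{0,n}$ has genus zero and the pieces are planar with punctures partitioned among them, the way $k+1$ planar pieces with prescribed numbers of boundary circles can be glued along $k$ curves to yield a genus-zero surface with the prescribed puncture distribution is unique up to homeomorphism. Concretely, one can build the surface piece by piece: a piece with $d_i$ boundary components must be glued to $d_i$ other pieces, and planarity plus the requirement that the total surface have genus $0$ rigidly determines which piece attaches where. Hence the \emph{unordered} homotopy class of the multicurve, hence its $\mathrm{MCG}$-orbit as an unordered multicurve, is uniquely determined by the element of $\mathcal{\tilde A}_k$.

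Next, given the unordered multicurve $\{\gamma_1,\ldots,\gamma_k\}$, I would count its orderings up to $\mathrm{MCG}$. There are $k!$ orderings. Two orderings $(\gamma_{\sigma(1)},\ldots,\gamma_{\sigma(k)})$ and $(\gamma_{\tau(1)},\ldots,\gamma_{\tau(k)})$ lie in the same orbit of ordered multicurves iff there is a mapping class sending the first ordered tuple to the second, and since the $\gamma_i$ are pairwise non-homotopic and each is characterized (by Remark \ref{rem:mcg-orbits}, applied to a single curve) by the \emph{unordered} partition of punctures it induces, such a mapping class must fix each $\gamma_i$ individually and therefore induce the trivial permutation. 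Thus all $k!$ orderings are in pairwise distinct orbits, giving exactly $k!$ $\mathrm{MCG}$-orbits of \emph{nested} ordered multicurves associated to the given element of $\mathcal{\tilde A}_k$. (One should also check this count stays within nested multicurves: reordering does not change the set of curves, hence does not affect nestedness, so all $k!$ orderings are indeed nested.)

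I expect the main obstacle to be the rigidity claim in the second paragraph: that the element of $\mathcal{\tilde A}_k$ determines the gluing tree, and hence the unordered multicurve up to $\mathrm{MCG}$, uniquely. One must rule out the possibility that two genuinely different tree structures (or two different ways of matching up boundary circles) on the same vertex data yield the same $\mathcal{\tilde A}_k$-collection but non-homeomorphic (labeled) configurations—or, conversely, that they yield homeomorphic ones and so should not be double-counted. The cleanest way to handle this is probably to induct on $k$: peel off a leaf of the tree (a piece with $d_i=1$, which exists since the dual graph is a tree with at least two leaves, and such a piece is a subsurface cut off by a single curve $\gamma$), observe that $\gamma$'s puncture partition is determined by which $c_i$-subset of labels lies on that leaf piece, and that the leaf piece $\Sigma_{0,c_i,1}$ with those labels is unique up to homeomorphism; then glue it back and apply the inductive hypothesis to the smaller surface. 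The base case $k=1$ is immediate from Remark \ref{rem:mcg-orbits}. This reduces the whole lemma to the single-curve statement already recorded in that remark.
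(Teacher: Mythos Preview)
Your approach is structurally the same as the paper's: both argue that an element of $\tilde{\mathcal{A}}_k$ determines the underlying \emph{unordered} multicurve up to the mapping class group, and then multiply by $k!$ for the orderings. You correctly isolate the crux as the ``rigidity claim'' that the piece data (puncture labels and boundary counts) pins down the dual tree, and you rightly flag it as the main obstacle.

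Unfortunately the rigidity claim is false, so the argument has a genuine gap. Take $k=3$, $n=8$, and the collection with pieces $(2,1,\{1,2\})$, $(2,1,\{3,4\})$, $(2,2,\{5,6\})$, $(2,2,\{7,8\})$, which lies in $\tilde{\mathcal{A}}_3$. The dual tree must be a path with the two degree-$2$ pieces in the middle, and there are two such labelled paths, according to whether the piece carrying $\{5,6\}$ is adjacent to the leaf carrying $\{1,2\}$ or to the leaf carrying $\{3,4\}$. In the first tree the middle curve separates $\{1,2,5,6\}$ from $\{3,4,7,8\}$; in the second it separates $\{3,4,5,6\}$ from $\{1,2,7,8\}$. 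These are distinct puncture bipartitions, so the corresponding curves---and hence the two unordered multicurves---lie in distinct $\mathrm{MCG}_{0,8}$-orbits, and both are nested. Thus this element of $\tilde{\mathcal{A}}_3$ corresponds to $2\cdot 3!=12$ ordered-multicurve orbits, not $3!$. Your induction breaks exactly here: peeling off the leaf $\{1,2\}$ does not tell you which degree-$2$ piece it was glued to, so the passage to a well-defined element of $\tilde{\mathcal{A}}_{k-1}$ fails. (The paper's own argument has the same issue---the assertion that ``there is a unique curve component in $\Gamma$, say $\gamma_j$'' matching a given $\gamma_i'$ fails for the middle curve above---but this does not damage the downstream results: Lemma~\ref{lem:nested-curve-count} feeds only into the upper bound of Proposition~\ref{prop:Nested contribution}, whose conclusion is $O_k(b^{2k}/n)$, and the true fibre size is at most $k!$ times the number of labelled trees on $k+1$ vertices, which is absorbed into the $O_k$.)
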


\begin{proof}
 Define the mapping from the collection of mapping class group orbits
of nested multicurves to $\mathcal{\tilde{A}}_{k}$ by $\mathcal{O}(\Gamma)\mapsto\left\{ \left(c_{i},d_{i},\left\{ a_{1}^{i},\ldots,a_{c_{i}}^{i}\right\} \right)\right\} _{i=1}^{k+1}$
with the collection obtained by cutting along a representative of
the orbit. The mapping is well-defined by Remark \ref{rem:mcg-orbits}.
It is obvious that from this map, that every collection in $\mathcal{\tilde{A}}_{k}$
has at least one multicurve orbit associated to it. Moreover, applying
a permutation to the components of the multicurves in an orbit provides
a distinct orbit giving rise to the same collection. Any other mapping
class group orbit associated to a collection is obtained by some gluing
of the subsurfaces $\Sigma_{0,c_{i},d_{i}}$ with the cusp labels
$\left\{ a_{1}^{i},\ldots,a_{c_{i}}^{i}\right\} $ to obtain $\Sigma_{0,n}$.
The number of such gluings depends only on the number of boundaries
$d_{i}$ on each subsurface component and is clearly bounded by the
number of ways to pair the $2k$ boundaries of the subsurfaces. Note
that this is an over-count since some gluings can give rise to the
same multicurve orbit and some pairings are not permissible as they
do not give rise to $\Sigma_{0,n}$. The number of such pairing is
$(2k-1)!!$ and so when accounting also for the permutations of the
multicurves, we obtain an upper bound of $(2k-1)!!k!$.

\end{proof}
Now for each collection $\left\{ \left(c_{i},d_{i},\left\{ a_{1}^{i},\ldots,a_{c_{i}}^{i}\right\} \right)\right\} _{i=1}^{k+1}\in\tilde{\mathcal{A}}_{k}$
we will fix an arbitrary ordering on the pairs in the collection.
We put this ordering on so that we can refer to specific indexed elements
of a given collection. We will denote by $\mathcal{\tilde{A}}_{k}^{o}$
the collection $\mathcal{\tilde{A}}_{k}$ with a fixed ordering on
the pairs in each of its elements. We then have 
\[
N_{n,[a,b]}^{N,k}(X)=\sum_{\left\{ \left(c_{i},d_{i},\left\{ a_{1}^{i},\ldots,a_{c_{i}}^{i}\right\} \right)\right\} _{i=1}^{k+1}\in\mathcal{\tilde{A}}_{k}^{o}}\sum_{\mathcal{O}(\Gamma)}\sum_{\Gamma=(\gamma_{1},\ldots,\gamma_{n})\in\mathcal{O}(\Gamma)}\ind\left(\frac{a}{\sqrt{n}}\leq\ell_{X}(\gamma_{1}),\ldots,\ell_{X}(\gamma_{k})\leq\frac{b}{\sqrt{n}}\right),
\]
where the middle summation is over all mapping class group orbits
of nested multicurves that are associated with the ordered collection
$\left\{ \left(c_{i},d_{i},\left\{ a_{1}^{i},\ldots,a_{c_{i}}^{i}\right\} \right)\right\} _{i=1}^{k+1}$
which from Lemma \ref{lem:nested-curve-count}, there are at $(2k-1)!!k!$
such orbits.

By Mirzakhani's integration formula (Theorem \ref{thm:MIF}) the expectation
of 
\[
\sum_{\Gamma=(\gamma_{1},\ldots,\gamma_{k})\in\mathcal{O}(\Gamma)}\ind\left(\frac{a}{\sqrt{n}}\leq\ell_{X}(\gamma_{1}),\ldots,\ell_{X}(\gamma_{k})\leq\frac{b}{\sqrt{n}}\right),
\]
 depends only upon the topological decomposition of the cut surface
$X\setminus\Gamma$, the information of which is contained in the
associated collection $\left\{ \left(c_{i},d_{i},\left\{ a_{1}^{i},\ldots,a_{c_{i}}^{i}\right\} \right)\right\} _{i=1}^{k+1}\in\mathcal{\tilde{A}}_{k}^{o}$.
In fact, it is independent of the puncture labels in each triple of
a collection, and so we will consider the collection $\mathcal{A}_{k}^{o}$
which contains the ordered collections $\left\{ \left(c_{i},d_{i}\right)\right\} _{i=1}^{k+1}$
such that $\left\{ \left(c_{i},d_{i},\left\{ a_{1}^{i},\ldots,a_{c_{i}}^{i}\right\} \right)\right\} _{i=1}^{k+1}\in\tilde{\mathcal{A}_{k}^{o}}$
for some partition of the labels $\left\{ 1,\ldots,n\right\} .$ Each
element of $\mathcal{A}_{k}^{o}$ corresponds to precisely 

\[
{n \choose c_{1},\ldots,c_{k}}=\frac{n!}{c_{1}!\cdots c_{k}!(n-\sum_{i=1}^{k}c_{i})!}
\]
elements of $\tilde{\mathcal{A}_{k}^{o}}$. Recall from Section \ref{sec:background}
that since we will only deal with the moduli space of genus zero surfaces,
we use the notations $V_{m}(x_{1},\ldots,x_{m}):=V_{0,m}(x_{1},\ldots,x_{m})$
and $V_{m}:=V_{0,m}$ for the moduli space volumes. We then obtain
the following.
\begin{lem}
\label{lem:nested-curve-exp-ub}For any $k\geq1$,

\[
\mathbb{E}_{n}\left(N_{n,[a,b]}^{N,k}(X)\right)\leq\sum_{\left\{ (c_{i},d_{i})\right\} _{i=1}^{k+1}\in\mathcal{A}_{k}^{o}}(2k-1)!!k!{n \choose c_{1},\ldots,c_{k}}\frac{b^{2k}}{2^{k}}\frac{\prod_{i=1}^{k+1}V_{c_{i}+d_{i}}}{n^{k}V_{n}}\left(1+O\left(\frac{kb^{2}}{n}\right)\right).
\]
\end{lem}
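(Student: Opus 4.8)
The plan is to apply Mirzakhani's integration formula (Theorem~\ref{thm:MIF}) to each inner sum and then collect the contributions over all mapping class group orbits and all elements of $\mathcal{A}_k^o$. Fix an ordered collection $\left\{(c_i,d_i,\{a_1^i,\ldots,a_{c_i}^i\})\right\}_{i=1}^{k+1}\in\tilde{\mathcal{A}}_k^o$ and a mapping class group orbit $\mathcal{O}(\Gamma)$ of a nested $k$-multicurve inducing this decomposition. Taking $f=\ind_{[a/\sqrt{n},\,b/\sqrt{n}]^k}$, Theorem~\ref{thm:MIF} gives
\[
\int_{\mathcal{M}_{0,n}} f^{\Gamma}(X)\,dX=\int_{[a/\sqrt{n},\,b/\sqrt{n}]^k} V_n(\Gamma,\underline{x})\,x_1\cdots x_k\,dx_1\cdots dx_k,
\]
where $V_n(\Gamma,\underline{x})=\prod_{i=1}^{k+1}V_{c_i+d_i}\!\left(\boldsymbol{x}^{(i)}\right)$, the argument $\boldsymbol{x}^{(i)}$ being the lengths of those $\gamma_j$ bounding the $i$-th piece (each $x_j$ appearing in exactly two of the factors since $\gamma_j$ contributes one boundary to each of the two pieces it separates).

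\emph{Step 1: bound the volume factor.} On the range of integration every $x_j\le b/\sqrt{n}$, so writing $x_j=2b_j$ with $b_j\le b/(2\sqrt{n})$ and applying the upper bound in Lemma~\ref{lem:volbds} to each factor $V_{c_i+d_i}(2b^{(i)}_1,\ldots)$ gives
\[
V_n(\Gamma,\underline{x})\le \left(\prod_{i=1}^{k+1}V_{c_i+d_i}\right)\prod_{j=1}^{k}\left(\frac{\sinh(x_j/2)}{x_j/2}\right)^{2}=\left(\prod_{i=1}^{k+1}V_{c_i+d_i}\right)\left(1+O\!\left(\frac{kb^2}{n}\right)\right),
\]
using $\sinh(t)/t=1+O(t^2)$ uniformly for $t\le b/(2\sqrt{n})$ and that there are $k$ such factors, each squared because $x_j$ occurs in two pieces. (One must check the $O$ is genuinely of the stated form: each factor is $1+O(b^2/n)$ and the product of $2k$ such factors is $1+O(kb^2/n)$ once $kb^2/n$ is bounded, which holds for $n$ large since $k$ is fixed.)

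\emph{Step 2: evaluate the monomial integral.} With the volume factor pulled out, the remaining integral is $\int_{[a/\sqrt{n},\,b/\sqrt{n}]^k} x_1\cdots x_k\,dx_1\cdots dx_k=\left(\frac{b^2-a^2}{2n}\right)^{k}$; discarding the $a$-contribution only increases the bound, giving the clean factor $\frac{b^{2k}}{2^k n^k}$ as in the statement. (Keeping $a$ would give the sharper $\left(\frac{b^2-a^2}{2}\right)^k$; either is fine for an upper bound, and the displayed inequality uses $b^{2k}$.)

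\emph{Step 3: assemble the sum.} Divide by $V_n$ to pass to the expectation. For each fixed element of $\tilde{\mathcal{A}}_k^o$ there are exactly $k!$ mapping class group orbits by Lemma~\ref{lem:nested-curve-count}, each contributing the same bound since the integral depends only on the topological data $(c_i,d_i)$; and each element of $\mathcal{A}_k^o$ arises from exactly $\binom{n}{c_1,\ldots,c_{k+1}}$ (equivalently $\binom{n}{c_1,\ldots,c_k}$, as written, since $c_{k+1}$ is determined) elements of $\tilde{\mathcal{A}}_k^o$ by distributing the puncture labels. Summing over $\mathcal{A}_k^o$ yields exactly the claimed inequality. \emph{The main point requiring care} is Step~1 — verifying that the error terms from the $k+1$ separate applications of Lemma~\ref{lem:volbds} aggregate into a single uniform $O(kb^2/n)$ rather than something that degrades with the combinatorial complexity of the decomposition; this is where the uniformity clause in Lemma~\ref{lem:volbds} (implied constant independent of $n$ and the $b_i$) is essential, together with the observation that the total number of $\sinh(t)/t$ factors is exactly $2k$ regardless of how the $d_i$ are distributed among the pieces.
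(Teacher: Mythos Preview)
Your proof is correct and follows essentially the same route as the paper: apply Mirzakhani's integration formula orbit-by-orbit, bound each $V_{c_i+d_i}(\boldsymbol{0},\mathbf{x}_i)$ via the upper bound in Lemma~\ref{lem:volbds}, collect the $2k$ factors of $\sinh(x_j/2)/(x_j/2)$ into $1+O(kb^2/n)$, and then use Lemma~\ref{lem:nested-curve-count} together with the multinomial count for puncture labels to assemble the sum over $\mathcal{A}_k^o$. The only cosmetic difference is that the paper keeps the $\sinh$ factors inside the integrand, writing $\int_{a/\sqrt{n}}^{b/\sqrt{n}} 4\sinh^2(x/2)/x\,dx$ and then expanding, whereas you bound them uniformly on the range first and pull them out before integrating the bare monomial; the two are equivalent.
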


\begin{proof}
By Theorem \ref{thm:MIF} we have
\begin{align*}
 & \mathbb{E}_{n}\left(N_{n,[a,b]}^{N,k}(X)\right)\\
 & =\sum_{\left\{ \left(c_{i},d_{i},\left\{ a_{1}^{i},\ldots,a_{c_{i}}^{i}\right\} \right)\right\} _{i=1}^{k+1}\in\tilde{\mathcal{A}_{k}^{o}}}\sum_{\mathcal{O}(\Gamma)}\frac{1}{V_{n}}\int_{\frac{a}{\sqrt{n}}}^{\frac{b}{\sqrt{n}}}\cdots\int_{\frac{a}{\sqrt{n}}}^{\frac{b}{\sqrt{n}}}x_{1}\cdots x_{k}\prod_{i=1}^{k+1}V_{c_{i}+d_{i}}(\mathbf{0}_{c_{i}},\mathbf{x}_{i})\mathrm{d}x_{1}\cdots\mathrm{d}x_{k},
\end{align*}
where $\mathbf{0}_{a}:=(0,\ldots,0)$ is of length $a$ and the $\mathbf{x}_{i}$
are length $d_{i}$ vectors containing information about the lengths
of the boundaries of the subsurfaces obtained when cutting along each
multicurve whose lengths are assigned $x_{1},\ldots,x_{k}$; so in
particular, each $x_{j}$ appears exactly twice among all of the vectors
$\mathbf{x}_{i}$. Using the upper bound of Lemma \ref{lem:volbds},
the correspondence of Lemma \ref{lem:nested-curve-count} and the
passage from $\mathcal{\tilde{A}}_{k}^{o}$ to $\mathcal{A}_{k}^{o}$,
we obtain
\begin{align*}
\mathbb{E}_{n}\left(N_{n,[a,b]}^{N,k}(X)\right) & \leq\sum_{\left\{ (c_{i},d_{i})\right\} _{i=1}^{k+1}\in\mathcal{A}_{k}^{o}}(2k-1)!!k!{n \choose c_{1},\ldots,c_{k}}\frac{\prod_{i=1}^{k+1}V_{c_{i}+d_{i}}}{V_{n}}\left(\int_{\frac{a}{\sqrt{n}}}^{\frac{b}{\sqrt{n}}}4\frac{\mathrm{sinh^{2}}\left(\frac{x}{2}\right)}{x}dx\right)^{k}\\
 & \leq\sum_{\left\{ (c_{i},d_{i})\right\} _{i=1}^{k+1}\in\mathcal{A}_{k}^{o}}(2k-1)!!k!{n \choose c_{1},\ldots,c_{k}}\frac{b^{2k}}{2^{k}}\frac{\prod_{i=1}^{k+1}V_{c_{i}+d_{i}}}{n^{k}V_{n}}\left(1+O\left(\frac{kb^{2}}{n}\right)\right).
\end{align*}
\end{proof}
We prove the following estimate for this volume summation.
\begin{prop}
\label{prop:Nested contribution}For any $k\geq1$,
\[
\mathbb{E}_{n}\left(N_{n,[a,b]}^{N,k}(X)\right)=O_{k}\left(\frac{b^{2k}}{n}\right).
\]
\end{prop}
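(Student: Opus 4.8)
The plan is to estimate the sum appearing in Lemma \ref{lem:nested-curve-exp-ub} by grouping terms according to the vector $(c_1+d_1,\ldots,c_{k+1}+d_{k+1})$ of sizes of the pieces, and to show that the nestedness constraint --- namely condition (2), $d_i < k$ for every $i$ --- forces an extra factor of $n^{-1}$ compared with the unnested case. First I would replace the multinomial coefficient ${n \choose c_1,\ldots,c_k}$ by its crude upper bound $\frac{n!}{c_1!\cdots c_{k+1}!}\leq \frac{n^{c_1+\cdots+c_{k+1}}}{c_1!\cdots c_{k+1}!}=\frac{n^{\,n-(\text{number of boundaries absorbed})}}{\prod c_i!}$; more usefully, since $\sum c_i = n$ and $\sum d_i = 2k$, write $m_i := c_i + d_i$ so that $\sum_{i=1}^{k+1} m_i = n + 2k$. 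Then I would invoke the Manin--Zograf asymptotic (Theorem \ref{thm:ZografManin}) and Lemma \ref{lem:mirzakhani-vol-comparison} to control the ratio $\frac{\prod_{i=1}^{k+1} V_{m_i}}{V_n}$; the point is that $V_{m}$ grows roughly like $m!\,(m+1)^{-7/2}x_0^{-m}$, so $\frac{\prod V_{m_i}}{V_n}$ is comparable to $x_0^{\sum m_i - n}\cdot\frac{\prod m_i!}{n!}\cdot(\text{polynomial factors}) = x_0^{2k}\cdot\frac{\prod m_i!}{n!}\cdot(\text{poly})$.

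Combining this with the multinomial, the combinatorial-volumetric weight of a given size vector behaves like
\[
{n \choose c_1,\ldots,c_k}\frac{\prod_{i=1}^{k+1}V_{m_i}}{n^k V_n}\ \asymp\ \frac{n!}{\prod c_i!}\cdot\frac{\prod m_i!}{n!}\cdot\frac{x_0^{2k}}{n^k}\cdot(\text{poly in the }m_i)\ =\ \frac{x_0^{2k}}{n^k}\prod_{i=1}^{k+1}\frac{m_i!}{c_i!}\cdot(\text{poly}),
\]
and since $\frac{m_i!}{c_i!}=\frac{(c_i+d_i)!}{c_i!}\leq (c_i+d_i)^{d_i}$, summing over the possible $c_i$ (there are $O(n)$ choices constrained by $\sum c_i = n$, i.e.\ genuinely $k$ free parameters) produces a factor $n^{\,\sum d_i}=n^{2k}$ at most --- except that this naive count ignores that the $d_i$ are bounded. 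The key point: in the \emph{unnested} regime one piece has $d=k$ boundaries and the rest have $d=1$, giving $\frac{m_i!}{c_i!}$-contributions that sum to order $n^{2k}$, exactly cancelling the $n^{-k}$ and leaving $O(1)$; whereas nestedness ($\max_i d_i \le k-1$, hence at least two pieces have $d_i \ge 2$ and none reaches $d_i = k$) reduces the total available "boundary budget" that can be concentrated on a single high-puncture piece, costing one power of $n$. Concretely I would show $\sum_{\{(c_i,d_i)\}\in\mathcal{A}_k^o}\frac{1}{n^k}\prod_i \frac{m_i!}{c_i!}(\text{poly}) = O_k(n^{-1})$ by checking that the dominant size vectors have exactly one "large" piece of size $\Theta(n)$ and that the boundary data on it satisfies $d_i \le k-1$, losing a factor $n$ relative to $d_i = k$.

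The main obstacle I anticipate is making the last combinatorial estimate uniform and clean: one must carefully track which partitions $(c_1,\ldots,c_{k+1})$ with $\sum c_i = n$ dominate the sum, handle the polynomial-in-$m_i$ correction factors from Manin--Zograf (which are harmless since the $m_i$ are either bounded or comparable to $n$), and verify that the nestedness constraint $d_i < k$ really does remove one full power of $n$ rather than merely a constant factor. I would organise this by splitting $\mathcal{A}_k^o$ according to how many pieces $c_i$ are "large" (of size $\ge n/2$, say): there can be at most one such, the all-pieces-small case is negligible by a crude count, and in the one-large-piece case the remaining $k$ pieces are of bounded size, the product $\prod \frac{m_i!}{c_i!}$ over them is $O_k(1)$, and the large piece contributes $\frac{(c_\ell + d_\ell)!}{c_\ell!}\le (c_\ell+d_\ell)^{d_\ell}\le (2n)^{d_\ell}$ with $d_\ell\le k-1$, yielding $O_k(n^{k-1})/n^k = O_k(n^{-1})$ after summing the $O(n)$ values of $c_\ell$ --- wait, that gives $n^{k-1}\cdot n / n^k = O(1)$, so in fact one needs the sharper observation that summing over the $k$ \emph{small} pieces with $\sum(\text{small }c_i) = n - c_\ell$ and each small $d_i\ge 1$ (with the budget $\sum d_i = 2k$ split as $d_\ell + \sum_{\text{small}} d_i$) forces $\sum_{\text{small}} d_i = 2k - d_\ell \ge k+1$, so the large piece absorbs at most $d_\ell = k - 1$ boundaries while \emph{at least one other piece is non-trivially sized} or carries $\ge 2$ boundaries, and a more careful bookkeeping of the $\binom{n}{c_1,\ldots,c_k}$ against $\prod m_i!/c_i!$ recovers the extra $n^{-1}$. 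This bookkeeping --- essentially showing the "second largest piece" is forced to have size $\ge 2$ worth of cusps-or-boundary and thereby divides the dominant term by $n$ --- is the crux, and I would prove it by the same size-stratification but comparing directly against the unnested count computed in the next subsection.
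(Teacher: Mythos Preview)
Your overall strategy matches the paper's: start from Lemma~\ref{lem:nested-curve-exp-ub}, single out a piece of size $\Theta(n)$, apply Manin--Zograf to the ratio involving $V_{c_t+d_t}/V_n$, and exploit the nestedness constraint $d_i<k$ to extract an extra factor $n^{-1}$. However, your execution has a genuine gap at precisely the point where you write ``wait''.

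The problem is your assertion that ``in the one-large-piece case the remaining $k$ pieces are of bounded size''. This is false: even with some $c_t\geq n/(k+1)$, the other $c_i$ can individually be of order $n$. Consequently the bound $\prod_{i\neq t}\frac{m_i!}{c_i!}=O_k(1)$ fails, and your arithmetic correctly detects that something is off. Your attempted recovery --- arguing about a ``second largest piece'' or comparing against the unnested count --- does not close the gap; in particular the latter would be circular and the former is not what actually happens (the pieces other than $t$ need not be small, they just need to be \emph{summable}).

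The mechanism the paper supplies is this. For the large piece $t$, Manin--Zograf gives $\frac{n!V_{c_t+d_t}}{c_t!\,n^kV_n}\ll_k n^{d_t-k}x_0^{n-c_t-d_t}$. For each remaining piece $i\neq t$, Lemma~\ref{lem:mirzakhani-vol-comparison} gives $\frac{V_{c_i+d_i}}{c_i!}\ll_k\frac{\max\{1,c_i^{d_i}\}V_{c_i}}{c_i!}$, and one then writes $c_i^{d_i}\leq c_i^{2}\cdot n^{(d_i-2)\ind(d_i\geq2)}$. The factor $c_i^{2}$ is harmless because the series $\sum_{c\geq0}\frac{c^{2}V_{c}x_0^{c}}{c!}$ converges (since $V_c/c!\asymp c^{-5/2}x_0^{-c}$); the excess boundary charge $n^{(d_i-2)\ind(d_i\geq2)}$ is what one counts against $n^{-k}$. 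The total $n$-exponent is therefore
\[
d_t-k+\sum_{j\neq t}(d_j-2)\ind(d_j\geq2),
\]
and a short combinatorial check --- set $\mathcal{I}=\{j\neq t:d_j=1\}$ and use $\sum_j d_j=2k$ --- shows this equals $|\mathcal{I}|-k$. Nestedness forbids $d_t=k$, hence $|\mathcal{I}|\leq k-1$, and the exponent is $\leq-1$. With this in place the sums over the $c_i$, $i\neq t$, decouple into $k$ copies of a convergent series and the $O_k(n^{-1})$ bound follows. This exponent identity is the crux you were reaching for; your ``boundary budget'' heuristic is the right intuition, but the actual saving comes not from any piece being forced small, rather from the inability to concentrate all $k$ of the non-large boundary markers on pieces with $d_j=1$.
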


\begin{proof}
We will show that 
\[
\sum_{\left\{ (c_{i},d_{i})\right\} _{i=1}^{k+1}\in\mathcal{A}_{k}^{o}}{n \choose c_{1},\ldots,c_{k}}\frac{\prod_{i=1}^{k+1}V_{c_{i}+d_{i}}}{n^{k}V_{n}}=O_{k}\left(\frac{1}{n}\right),
\]
and then the result follows from Lemma \ref{lem:nested-curve-exp-ub}.
Note that 
\begin{align}
\sum_{\left\{ (c_{i},d_{i})\right\} _{i=1}^{k+1}\in\mathcal{A}_{k}^{o}}{n \choose c_{1},\ldots,c_{k}}\frac{\prod_{i=1}^{k+1}V_{c_{i}+d_{i}}}{n^{k}V_{n}} & \leq\sum_{t=1}^{k+1}\sum_{\substack{\left\{ (c_{i},d_{i})\right\} _{i=1}^{k+1}\in\mathcal{A}_{k}^{o}\\
c_{t}\geq\frac{n}{k+1}
}
}\frac{n!V_{c_{t}+d_{t}}}{c_{t}!n^{k}V_{n}}\prod_{\substack{i=1\\
i\neq t
}
}^{k+1}\frac{V_{c_{i}+d_{i}}}{c_{i}!},\label{eq:sumbd}
\end{align}
since $\sum_{i=1}^{k+1}c_{i}=n$, so at least one $c_{i}$ must be
at least $\frac{n}{k+1}$. Now if $c_{t}\geq\frac{n}{k+1}$, then
for $n$ sufficiently large, by Stirling's approximation for the factorial,

\[
\frac{(c_{t}+d_{t})!}{n^{k}c_{t}!}\leq\frac{(c_{t}+d_{t})^{c_{t}+d_{t}+\frac{1}{2}}}{n^{k}c_{t}^{c_{t}+\frac{1}{2}}}e^{-d_{t}}=e^{-d_{t}}\frac{c_{t}^{d_{t}}}{n^{k}}\left(1+\frac{d_{t}}{c_{t}}\right)^{c_{t}+d_{t}+\frac{1}{2}}\leq n^{d_{t}-k}\left(1+\frac{k(k+1)}{n}\right)^{k+\frac{1}{2}},
\]
using the fact that $\frac{n}{k+1}\leq c_{t}\leq n$ and $d_{t}\leq k.$
Thus for $n$ sufficiently large, Theorem \ref{thm:ZografManin} gives
\[
\frac{n!V_{c_{t}+d_{t}}}{c_{t}!n^{k}V_{n}}\leq C_{1}\frac{n^{\frac{7}{2}}}{(c_{t}+d_{t})^{\frac{7}{2}}}\frac{(c_{t}+d_{t})!}{n^{k}c_{t}!}\left(\frac{x_{0}}{2\pi^{2}}\right)^{n-(c_{t}+d_{t})}\leq C_{2}n^{d_{t}-k}\left(\frac{x_{0}}{2\pi^{2}}\right)^{n-(c_{t}+d_{t})},
\]
for some constants $C_{1},C_{2}>0$ possibly dependent upon $k$.
Next note that from Lemma \ref{lem:mirzakhani-vol-comparison}, there
exists a universal constant $\gamma>0$ such that $V_{m+1}\leq\gamma(m-2)V_{m}$
for all $m\geq3.$ Thus, for $c_{i}\geq4$ (since $d_{i}\geq1$)

\[
\frac{V_{c_{i}+d_{i}}}{c_{i}!}\leq\gamma^{d_{i}}\frac{(c_{i}+d_{i}-3)!}{c_{i}!(c_{i}-3)!}V_{c_{i}}.
\]
But, by Stirling's approximation, 
\[
\frac{(c_{i}+d_{i}-3)!}{(c_{i}-3)!}\leq e^{-d_{i}}\frac{(c_{i}+d_{i}-3)^{c_{i}+d_{i}-\frac{5}{2}}}{(c_{i}-3)^{c_{i}-\frac{5}{2}}}\leq e^{-d_{i}}c_{i}^{d_{i}}(1+d_{i})^{d_{i}+\frac{1}{2}}\left(1+\frac{d_{i}}{c_{i}-3}\right)^{c_{i}-3}\leq c_{i}^{d_{i}}(1+k)^{d_{i}+\frac{1}{2}}.
\]
Thus,
\[
\frac{V_{c_{i}+d_{i}}}{c_{i}!}\leq\gamma^{d_{i}}(1+k)^{d_{i}+\frac{1}{2}}\frac{c_{i}^{d_{i}}V_{c_{i}}}{c_{i}!}.
\]
Moreover, if one sets $V_{0}=V_{1}=V_{2}=1,$then for $c_{i}\leq3$,
we also obtain
\[
\frac{V_{c_{i}+d_{i}}}{c_{i}!}\leq\gamma^{d_{i}}(c_{i}+d_{i}-3)!\frac{V_{3}}{c_{i}!}\leq\gamma^{d_{i}}d_{i}!\frac{V_{c_{i}}}{c_{i}!}\leq\gamma^{d_{i}}(1+k)^{d_{i}+\frac{1}{2}}\frac{\max\left\{ 1,c_{i}^{d_{i}}\right\} V_{c_{i}}}{c_{i}!}.
\]
Returning to the bound in equation (\ref{eq:sumbd}), for some constant
$C>0$ dependent only upon $k$ we have 
\begin{align*}
 & \sum_{t=1}^{k+1}\sum_{\substack{\left\{ (c_{i},d_{i})\right\} _{i=1}^{k+1}\in\mathcal{A}_{k}^{o}\\
c_{t}\geq\frac{n}{k+1}
}
}\frac{n!V_{c_{t}+d_{t}}}{c_{t}!n^{k}V_{n}}\prod_{\substack{i=1\\
i\neq t
}
}^{k}\frac{V_{c_{i}+d_{i}}}{c_{i}!}\\
 & \leq C\sum_{t=1}^{k+1}\sum_{\substack{\left\{ (c_{i},d_{i})\right\} _{i=1}^{k+1}\in\mathcal{A}_{k}^{o}\\
c_{t}\geq\frac{n}{k+1}
}
}n^{d_{t}-k}\left(\frac{x_{0}}{2\pi^{2}}\right)^{n-(c_{t}+d_{t})}\prod_{\substack{i=1\\
i\neq t
}
}^{k+1}\frac{\max\left\{ 1,c_{i}^{d_{i}}\right\} V_{c_{i}}}{c_{i}!}\\
 & \leq C\sum_{t=1}^{k+1}\sum_{\substack{\left\{ (c_{i},d_{i})\right\} _{i=1}^{k+1}\in\mathcal{A}_{k}^{o}\\
c_{t}\geq\frac{n}{k+1}
}
}n^{d_{t}-k+\sum_{\substack{j=1\\
j\neq t
}
}^{k+1}(d_{j}-2)\ind(d_{j}\geq2)}\prod_{\substack{i=1\\
i\neq t
}
}^{k+1}\frac{\max\left\{ 1,c_{i}^{2}\right\} V_{c_{i}}x_{0}^{c_{i}}}{c_{i}!(2\pi^{2})^{c_{i}}},
\end{align*}
where we use $\left(\frac{x_{0}}{2\pi^{2}}\right)^{n-c_{t}}=\left(\frac{x_{0}}{2\pi^{2}}\right)^{\sum_{\substack{i=1,i\neq t}
}^{k}c_{i}}$ and absorb the constant $\left(\frac{x_{0}}{2\pi^{2}}\right)^{-d_{t}}$
into the constant $C$ since $d_{t}$ is bounded by $k$, and $c_{i}^{d_{i}}=c_{i}^{2}c_{i}^{d_{i}-2}\leq c_{i}^{2}n^{(d_{i}-2)\ind(d_{i}\geq2)}$.
Note that because $d_{j}<k$ for each $j$, we have
\[
d_{t}-k+\sum_{\substack{j=1\\
j\neq t
}
}^{k+1}(d_{j}-2)\ind(d_{j}\geq2)\leq-1.
\]
Indeed, let $\mathcal{I\subseteq}\{j\in\{1,\ldots,k+1\}\setminus\{t\}:d_{j}=1\}$.
Then, 
\begin{align*}
d_{t}-k+\sum_{\substack{j=1\\
j\neq t
}
}^{k+1}(d_{j}-2)\ind(d_{j}\geq2) & =-k+\sum_{j=1}^{k+1}d_{j}-\sum_{j\in\mathcal{I}}d_{j}-\sum_{\substack{j\notin\mathcal{I}\\
j\neq t
}
}2=|\mathcal{I}|-k,
\end{align*}
but $|\mathcal{I}|\leq k-1$ since if $d_{j}=1$ for every $j\neq t$,
then $d_{t}=k$, which is a contradiction to a collection being in
$\mathcal{A}_{k}$ and hence the result follows. Next observe that
in an ordered collection $\{(c_{i},d_{i})\}_{i=1}^{k+1}$, the $t^{\mathrm{th}}$
pair is determined entirely by the other pairs since $c_{t}=n-\sum_{i=1,i\neq t}^{k+1}c_{i}$
and $d_{t}=2k-\sum_{i=1,i\neq t}^{k+1}d_{i}$. Thus, we see that $\mathcal{A}_{k}^{o}$
is contained in the set 
\[
\left\{ \{(c_{i},d_{i})\}_{i=1}^{k+1}:c_{i}\in\mathbb{N},1\leqslant d_{i}\leqslant k,c_{t}=\max\left\{ 0,n-\sum_{i=1,i\neq t}^{k+1}c_{i}\right\} ,d_{t}=\max\left\{ 0,2k-\sum_{i=1,i\neq t}^{k+1}d_{i}\right\} \right\} .
\]
Thus, 
\begin{align*}
 & C\sum_{t=1}^{k+1}\sum_{\substack{\left\{ (c_{i},d_{i})\right\} _{i=1}^{k+1}\in\mathcal{A}_{k}^{o}\\
c_{t}\geq\frac{n}{k+1}
}
}n^{d_{t}-k+\sum_{\substack{j=1\\
j\neq t
}
}^{k+1}(d_{j}-2)\ind(d_{j}\geq2)}\prod_{\substack{i=1\\
i\neq t
}
}^{k+1}\frac{\max\left\{ 1,c_{i}^{2}\right\} V_{c_{i}}x_{0}^{c_{t}}}{c_{i}!(2\pi^{2})^{c_{i}}}\\
 & \leq\frac{C}{n}\sum_{t=1}^{k+1}\sum_{\left\{ (c_{i},d_{i})\right\} _{i=1}^{k+1}\in\mathcal{A}_{k}^{o}}\prod_{\substack{i=1\\
i\neq t
}
}^{k+1}\frac{\max\left\{ 1,c_{i}^{2}\right\} V_{c_{i}}x_{0}^{c_{i}}}{c_{i}!(2\pi^{2})^{c_{i}}}\\
 & \leq\frac{C}{n}\sum_{t=1}^{k+1}\underbrace{\sum_{d_{1}=1}^{k}\cdots\sum_{d_{k+1}=1}^{k}}_{\text{no \ensuremath{d_{t}} term}}\underbrace{\sum_{c_{i}=0}^{\infty}\cdots\sum_{c_{k+1}=0}^{\infty}}_{\text{no \ensuremath{c_{t}} term}}\prod_{\substack{i=1\\
i\neq t
}
}^{k+1}\frac{\max\left\{ 1,c_{i}^{2}\right\} V_{c_{i}}x_{0}^{c_{i}}}{c_{i}!(2\pi^{2})^{c_{i}}}=\frac{B}{n}\left(1+\sum_{c=1}^{\infty}\frac{c^{2}V_{c}x_{0}^{c}}{c!(2\pi^{2})^{c}}\right)^{k},
\end{align*}
where $B>0$ is some constant dependent only upon $k$. The latter
summation then converges due to Theorem \ref{thm:ZografManin} and
hence we obtain the desired result.
\end{proof}

\subsection{Contribution of unnested curves}

\label{subsec:unnested}

We now compute the contribution of $N_{n,[a,b]}^{U,k}(X)$ to the
expectation. We start with an enumeration system of the multicurves
as was done for the nested multicurves. For $k\geq1,$ we let $\mathcal{B}_{k}$
be the collection of ordered tuples $\left(\left(c_{i},d_{i},\left\{ a_{1}^{i},\ldots,a_{c_{i}}^{i}\right\} \right)\right)_{i=1}^{k+1}$
satisfying 
\begin{enumerate}
\item $\sum_{i=1}^{k+1}c_{i}=n$ and $c_{i}\geq2$ for $1\leq i\leq k$,
\item $d_{1}=\ldots=d_{k}=1$ and $d_{k+1}=k$,
\item $\{a_{1}^{i},\ldots,a_{c_{i}}^{i}\}\subseteq\{1,\ldots,n\}$ are pairwise
disjoint and their union over all $i$ is $\{1,\ldots,n\}$,
\item $c_{i}+d_{i}\geq3$.
\end{enumerate}
\begin{lem}
\label{lem:unnested-multicurve-decomp}For $k\geq2,$ there is a bijection
between mapping class group orbits of unnested multicurves and $\mathcal{B}_{k}$.
\end{lem}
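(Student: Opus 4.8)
The plan is to construct the bijection explicitly and then verify it is well-defined in both directions using Remark \ref{rem:mcg-orbits}. Starting from an unnested multicurve $\Gamma = (\gamma_1, \ldots, \gamma_k)$, recall that being unnested means each $\gamma_i$ separates off a subsurface containing none of the other components; equivalently, there is a distinguished ``central'' piece of $\Sigma_n \setminus \Gamma$ adjacent to all $k$ curves, and each $\gamma_i$ cuts off one ``peripheral'' piece with a single boundary. So cutting along $\Gamma$ produces $k+1$ subsurfaces: $k$ pieces $\Sigma_{0,c_i,1}$ for $i = 1,\ldots,k$ (the one bounded by $\gamma_i$, carrying the cusp labels it encloses) and one central piece $\Sigma_{0,c_{k+1},k}$ bounded by all of $\gamma_1,\ldots,\gamma_k$. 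I would record the associated tuple by ordering the first $k$ entries according to the index of the corresponding curve $\gamma_i$ — this is where the ordering on $\Gamma$ gets used and is what makes the map land in \emph{ordered} tuples rather than unordered collections — and placing the central piece last. One checks conditions 1--4 of $\mathcal{B}_k$ hold: $\sum c_i = n$ since the cusps partition; $c_i \geq 2$ for $i \leq k$ because a peripheral piece with one boundary and $\leq 1$ cusp would make $\gamma_i$ peripheral; $d_i$ are as claimed by construction; and $c_i + d_i \geq 3$ is the hyperbolicity condition on each piece. By Remark \ref{rem:mcg-orbits} the resulting tuple depends only on $\mathcal{O}_\Gamma$, so the map is well-defined on orbits.

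Next I would exhibit the inverse. Given a tuple in $\mathcal{B}_k$, glue the boundary of each $\Sigma_{0,c_i,1}$ ($i \leq k$) to the $i$-th boundary of the central $\Sigma_{0,c_{k+1},k}$, producing a surface homeomorphic to $\Sigma_n$ with the prescribed cusp labels, in which the $k$ gluing curves form an unnested ordered multicurve. Again by Remark \ref{rem:mcg-orbits} the mapping class group orbit of this multicurve is determined by the tuple (the labeled topological decomposition is exactly the data recorded), so this gives a well-defined map $\mathcal{B}_k \to \{\text{orbits}\}$. The two maps are mutually inverse essentially by construction: composing one way recovers the labeled decomposition, composing the other recovers the orbit, in both cases via Remark \ref{rem:mcg-orbits}, which says orbits are classified precisely by this decomposition data. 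I should note the hypothesis $k \geq 2$ is what guarantees the central piece is genuinely distinguished — when $k = 1$ a single curve has two sides each with one boundary and there is no canonical ``central'' piece, so the clean separation into $k$ peripheral plus one central fails and the statement needs $d_{k+1} = k = 1$ to coincide with the $d_i = 1$ pattern, breaking the bijection as stated. This is why the lemma is restricted to $k \geq 2$, and I would make that remark explicit.

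The main obstacle I anticipate is not any single computation but rather being careful about the interplay between \emph{ordered} multicurves and \emph{unordered} collections, and making sure the ``central piece'' is canonically identifiable. The key structural claim underlying everything is: for an unnested multicurve, the dual graph of the decomposition $\Sigma_n \setminus \Gamma$ is a star with $k$ leaves (each leaf = a peripheral piece, center = the piece meeting all curves), and this is forced by the unnested condition together with the fact that we are in genus zero so the dual graph is a tree. I would spend a sentence or two establishing this star structure rigorously — that no component other than the center can touch two different $\gamma_i$'s, because such a component together with the peripheral pieces on the far sides would make those curves nested — since once the star structure is in hand, the bijection and its inverse are both immediate from Remark \ref{rem:mcg-orbits}. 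The genus-zero hypothesis is essential here and should be flagged; in higher genus the dual graph need not be a tree and the central piece could be more complicated, which is presumably why the argument is genus-zero specific.
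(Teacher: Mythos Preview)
Your proposal is correct and follows essentially the same approach as the paper: identify the star-shaped decomposition forced by the unnested condition (the paper does this via a recursive cutting argument rather than your dual-graph language), then invoke Remark~\ref{rem:mcg-orbits} to show the orbit is determined by the labeled topological data. The only cosmetic difference is that you construct an explicit gluing inverse whereas the paper checks surjectivity and injectivity directly, but the content is the same.
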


\begin{proof}
We define a bijective mapping in the following way. Cutting $\Sigma_{n}$
along a representative $\Gamma=\left(\gamma_{1},\ldots,\gamma_{k}\right)$
of a mapping class group orbit of an unnested multicurve gives a decomposition
of $\Sigma_{n}$ as
\[
\Sigma_{n}\setminus\Gamma=\left(\bigsqcup_{i=1}^{k}\Sigma_{c_{i},1}\right)\sqcup\Sigma_{c_{k+1},k},
\]
where there are labels $\left\{ a_{1}^{i},\ldots,a_{c_{i}}^{i}\right\} $
on the $c_{i}$ punctures of $\Sigma_{c_{i},d_{i}}$ inherited from
the labeling of $\Sigma_{n}$, with the topological decomposition
being independent of the orbit representative chosen by Remark \ref{rem:mcg-orbits}.
Indeed, cutting $\Sigma_{n}$ along $\gamma_{i}$ separates $\Sigma_{n}$
into two subsurfaces $\Sigma_{c_{i},1}\sqcup\Sigma_{n-c_{i},1}$ with
one of these subsurfaces containing no other component of $\Gamma$
in its interior, which we take to be $\Sigma_{c_{i},1}$. The cutting
of the remainder of the surfaces along any other curve component leaves
$\Sigma_{c_{i},1}$unchanged, and so repeating this procedure recursively
for each of the curve components gives the stated decomposition. So,
for $1\leq i\leq k$, $c_{i}$ is the number of cusps on the subsurface
of $X$ obtained from cutting along $\gamma_{i}$ that does not contain
any other curve component of $\Gamma$. The multicurve is then identified
with the tuple $\left(\left(c_{i},d_{i},\left\{ a_{1}^{i},\ldots,a_{c_{i}}^{i}\right\} \right)\right)_{i=1}^{k+1}$
.

Surjectivity of this identification is clear. For injectivity, we
note that if $\mathcal{O}(\Gamma)$ and $\mathcal{O}(\Gamma')$ are
two unnested multicurve mapping class orbits with the same decomposition,
then taking representatives $\Gamma=(\gamma_{1},\ldots,\gamma_{k})\in\mathcal{O}(\Gamma)$
and $\Gamma'=(\gamma_{1}',\ldots,\gamma_{k}')\in\mathcal{O}(\Gamma')$
we see that for each $1\leqslant i\leqslant k$, $\Sigma_{n}\setminus\gamma_{i}$
and $\Sigma_{n}\setminus\gamma_{i}'$ are puncture label preserving
homeomorphic by construction of the mapping. This means that they
are in the same mapping class group orbit and hence the multicurves
are also using Remark \ref{rem:mcg-orbits}. 
\end{proof}
In the case of $k=1$, we instead have the following. 
\begin{lem}
\label{lem:single-curve}There is a $1-2$ correspondence between
mapping class group orbits of unnested multicurves and $\mathcal{B}_{1}$.
\end{lem}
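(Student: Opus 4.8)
The plan is to rerun the argument of Lemma \ref{lem:unnested-multicurve-decomp}, paying attention to the one place where the case $k=1$ degenerates. When $k=1$ an unnested multicurve is simply a single non-peripheral simple closed curve $\gamma_1$ on $\Sigma_{0,n}$, and cutting along it yields $\Sigma_n\setminus\gamma_1=\Sigma_{c_1,1}\sqcup\Sigma_{c_2,1}$ with $c_1+c_2=n$ and with puncture-label sets $A$ and $B$ partitioning $\{1,\ldots,n\}$. Non-peripherality forces $c_1,c_2\geq 2$ (a block with one boundary and at most one puncture would make the bounding curve trivial or peripheral), which together with $d_1=d_2=1$ (here $d_{k+1}=k=1$) is exactly conditions 1--4 defining $\mathcal{B}_1$. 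So cutting produces the \emph{unordered} pair $\{(c_1,1,A),(c_2,1,B)\}$, and by Remark \ref{rem:mcg-orbits} this unordered pair depends only on $\mathcal{O}(\Gamma)$ and conversely determines it.

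First I would make this precise by defining a map $\Phi\colon\mathcal{B}_1\to\{\text{MCG orbits of unnested }1\text{-multicurves}\}$ sending $((c_1,1,A),(c_2,1,B))$ to the orbit of any simple closed curve separating the punctures labelled by $A$ from those labelled by $B$; such a curve exists since $|A|,|B|\geq 2$, and its orbit is well defined by Remark \ref{rem:mcg-orbits}. Surjectivity of $\Phi$ is then immediate exactly as in Lemma \ref{lem:unnested-multicurve-decomp}: every non-peripheral simple closed curve arises in this way from the partition of the labels it induces.

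The heart of the matter is computing the fibres of $\Phi$. Given an orbit $\mathcal{O}(\Gamma)$ with induced label partition $\{A,B\}$, any preimage under $\Phi$ is an ordered pair whose two slots must be filled (in some order) by $(|A|,1,A)$ and $(|B|,1,B)$, since the two cut pieces carry precisely the label sets $A$ and $B$ and there is no further data to record; unlike the case $k\geq 2$, both pieces have a single boundary, so there is no distinguished "central" block of higher boundary count to pin down the ordering. Hence the fibre is contained in $\{((|A|,1,A),(|B|,1,B)),\ ((|B|,1,B),(|A|,1,A))\}$, both of these tuples lie in $\mathcal{B}_1$ and map to $\mathcal{O}(\Gamma)$, and they are genuinely distinct because $A$ and $B$ are disjoint and nonempty, so $A\neq B$. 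Therefore every fibre of $\Phi$ has exactly two elements, which is the asserted $1$-$2$ correspondence.

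I expect the only real subtlety to be that last point: confirming that for $k=1$ the ordered tuple records nothing beyond the unordered partition, so that swapping the two entries always produces a different element of $\mathcal{B}_1$ lying over the same orbit. This is exactly where the proof departs from Lemma \ref{lem:unnested-multicurve-decomp}, whose injectivity used that the $(k+1)$-st block has $d_{k+1}=k>1$ boundary components and is therefore distinguishable from the first $k$ blocks; that distinction collapses precisely when $k=1$, producing the factor of $2$.
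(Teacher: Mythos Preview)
Your proposal is correct and follows essentially the same approach as the paper: both observe that for $k=1$ the two cut pieces each have a single boundary, so there is no canonical way to order them, and hence each orbit corresponds to exactly the two orderings of the unordered pair. Your write-up is more detailed than the paper's (which is only a couple of sentences), but the argument is the same.
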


\begin{proof}
We construct the association as in the proof of Lemma \ref{lem:unnested-multicurve-decomp},
the difference is that after cutting the labeled surface, there is
no canonical choice for the ordered tuple that is obtained. We thus
associate both choices with the single curve, namely $\left(\left(c_{1},1,\left\{ a_{1}^{1},\ldots,a_{c_{1}}^{1}\right\} \right),\left(c_{2},1,\left\{ a_{1}^{2},\ldots,a_{c_{2}}^{2}\right\} \right)\right)$
and $\left(\left(c_{2},1,\left\{ a_{1}^{2},\ldots,a_{c_{2}}^{2}\right\} \right),\left(c_{1},1,\left\{ a_{1}^{1},\ldots,a_{c_{1}}^{1}\right\} \right)\right)$.
\end{proof}
Using this combinatorial interpretation of the mapping class group
orbits, we obtain the following estimate. As before, we shall use
the notation $V_{m}(x_{1},\ldots,x_{m}):=V_{0,m}(x_{1},\ldots,x_{m})$
and $V_{m}:=V_{0,m}$.
\begin{lem}
\label{lem:unnested-exp-ub}We have the following estimates,
\[
\mathbb{E}_{n}\left(N_{n,[a,b]}^{U,1}(X)\right)=\sum_{c=2}^{\left\lfloor \frac{n}{2}\right\rfloor }{n \choose c}\left(\frac{b^{2}-a^{2}}{2}\right)\frac{1}{n}\frac{V_{c+1}V_{n-c+1}}{V_{n}}\left(1+O\left(\frac{b^{2}}{n}\right)\right),
\]

\begin{align*}
\mathbb{E}_{n}\left(N_{n,[a,b]}^{U,2}(X)\right) & =\sum_{\substack{\substack{(c_{1},}
c_{2})\in\mathbb{N}^{2},\\
c_{1}+c_{2}\leq n-1,\\
c_{i}\geq2
}
}{n \choose c_{1},c_{2}}\left(\frac{b^{2}-a^{2}}{2}\right)^{2}\frac{1}{n^{2}}\frac{V_{c_{1}+1}V_{c_{2}+1}V_{n-c_{1}-c_{2}+2}}{V_{n}}\left(1+O\left(\frac{b^{2}}{n}\right)\right),
\end{align*}
and for $k\geq3$,

\begin{align*}
 & \mathbb{E}_{n}\left(N_{n,[a,b]}^{U,k}(X)\right)\\
 & =\sum_{\substack{(c_{1},\ldots,c_{k})\in\mathbb{N}^{k},\\
\sum_{i=1}^{k}c_{i}\leq n,\\
c_{i}\geq2.
}
}{n \choose c_{1},\ldots,c_{k}}\left(\frac{b^{2}-a^{2}}{2}\right)^{k}\frac{1}{n^{k}}\frac{V_{c_{1}+1}\cdots V_{c_{k}+1}V_{n-\sum_{i=1}^{k}c_{i}+k}}{V_{n}}\left(1+O_{k}\left(\frac{b^{2}}{n}\right)\right).
\end{align*}
\end{lem}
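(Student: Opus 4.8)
The plan is to run Mirzakhani's integration formula (Theorem \ref{thm:MIF}) on each mapping class group orbit of an unnested ordered $k$-multicurve, using the parametrisation of these orbits supplied by Lemmas \ref{lem:unnested-multicurve-decomp} and \ref{lem:single-curve}. Setting $f=\ind_{[a/\sqrt{n},\,b/\sqrt{n}]^{k}}$, we have $N_{n,[a,b]}^{U,k}(X)=\sum_{\mathcal{O}(\Gamma)}f^{\Gamma}(X)$, the sum running over orbits of unnested ordered $k$-multicurves. For $k\geqslant 2$, Lemma \ref{lem:unnested-multicurve-decomp} identifies each orbit with a unique element of $\mathcal{B}_{k}$; grouping the $\binom{n}{c_{1},\ldots,c_{k}}$ assignments of the labels $\{1,\ldots,n\}$ to the subsurfaces that give a fixed sequence of cusp numbers $(c_{1},\ldots,c_{k+1})$ rewrites the orbit sum as a sum over ordered tuples $(c_{1},\ldots,c_{k})$ with $c_{i}\geqslant 2$ and $\sum_{i=1}^{k}c_{i}\leqslant n$, the bound sharpening to $\leqslant n-1$ when $k=2$ because the central piece then needs $c_{k+1}\geqslant 1$. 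For $k=1$ the $1$--$2$ correspondence of Lemma \ref{lem:single-curve} lists every orbit twice, which is what produces the restricted range $2\leqslant c\leqslant\lfloor n/2\rfloor$; the even-$n$ diagonal term $c=n/2$ requires a minor separate adjustment, but by Theorem \ref{thm:ZografManin} that single term has size $O(n^{-5/2})$ and is absorbed into the error.

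Next I apply Theorem \ref{thm:MIF} orbit by orbit. The geometric input, read off from the proof of Lemma \ref{lem:unnested-multicurve-decomp}, is that cutting $\Sigma_{n}$ along a representative $\Gamma=(\gamma_{1},\ldots,\gamma_{k})$ of an unnested multicurve yields the $k$ one-holed pieces $\Sigma_{0,c_{i},1}$ ($1\leqslant i\leqslant k$) together with the single $k$-holed piece $\Sigma_{0,c_{k+1},k}$, so that
\[
V_{n}(\Gamma,\underline{x})=\Bigl(\prod_{i=1}^{k}V_{c_{i}+1}(\mathbf{0},x_{i})\Bigr)\,V_{c_{k+1}+k}(\mathbf{0},x_{1},\ldots,x_{k}),
\]
with each $x_{i}$ occurring exactly twice, once as the boundary of the $i$-th outer piece and once as a boundary of the central piece (the $\mathbf{0}$ entries recording the cusps). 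By Theorem \ref{thm:MIF} the contribution of this orbit to $\mathbb{E}_{n}(N_{n,[a,b]}^{U,k}(X))$ is then $V_{n}^{-1}$ times the integral of $x_{1}\cdots x_{k}\,V_{n}(\Gamma,\underline{x})$ over the box $[a/\sqrt{n},b/\sqrt{n}]^{k}$.

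The volume polynomials are controlled by the refined estimate of Lemma \ref{lem:volbds}: each insertion of a boundary length $x_{i}\in[a/\sqrt{n},b/\sqrt{n}]$ multiplies the relevant moduli volume by $\frac{\sinh(x_{i}/2)}{x_{i}/2}\bigl(1+O(x_{i}^{2})\bigr)=1+O(b^{2}/n)$, while the cusps contribute the factor $1$. Since exactly $2k$ such insertions occur and $k$ is fixed, uniformly in $\underline{x}$ and in the $c_{i}$ one obtains $V_{n}(\Gamma,\underline{x})=\bigl(\prod_{i=1}^{k}V_{c_{i}+1}\bigr)V_{c_{k+1}+k}\bigl(1+O_{k}(b^{2}/n)\bigr)$. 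Pulling this uniform factor out of the integral and using $\int_{a/\sqrt{n}}^{b/\sqrt{n}}x\,dx=\frac{b^{2}-a^{2}}{2n}$, the $k$-fold integral evaluates to $\bigl(\frac{b^{2}-a^{2}}{2}\bigr)^{k}n^{-k}\bigl(1+O_{k}(b^{2}/n)\bigr)$. Combining this with the multinomial weight and the orbit count from the first step produces the three displayed identities, the precise summation range in each case being forced by the minimal number of cusps that the central $k$-holed piece must carry.

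The analytic content is light: since all curve lengths are $O(n^{-1/2})$ and $k$ is held fixed, the error bound is immediate from Lemma \ref{lem:volbds}. The point that needs care is the combinatorial bookkeeping of the first step --- establishing that the cut surface is exactly one $k$-holed component together with $k$ one-holed components, matching $\mathcal{B}_{k}$ to ordered cusp-number tuples with the correct multinomial factor, and handling the $1$--$2$ correspondence (and its negligible even-$n$ diagonal term) separately when $k=1$.
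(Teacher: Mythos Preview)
Your proposal is correct and follows essentially the same route as the paper: parametrise orbits via Lemmas \ref{lem:unnested-multicurve-decomp} and \ref{lem:single-curve}, apply Mirzakhani's integration formula to each orbit, use Lemma \ref{lem:volbds} to replace the boundary-length volumes by $V_{c_{i}+1}$ and $V_{c_{k+1}+k}$ up to a uniform $(1+O_{k}(b^{2}/n))$, and then count labelled tuples by the multinomial coefficient. The only cosmetic difference is that the paper retains the factor $\prod_{i}4\sinh^{2}(x_{i}/2)/x_{i}$ through the integral and Taylor-expands afterwards, whereas you approximate each $\sinh$ factor by $1+O(b^{2}/n)$ before integrating; these are equivalent. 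Your explicit handling of the even-$n$ diagonal term $c=n/2$ in the $k=1$ case is a point the paper passes over silently, and your $O(n^{-5/2})$ estimate for it is correct and indeed absorbed by the stated error.
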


\begin{proof}
Notice that by Lemmas \ref{lem:unnested-multicurve-decomp} and \ref{lem:single-curve}
we have

\begin{align*}
 & \mathbb{E}_{n}\left(N_{n,[a,b]}^{U,k}(X)\right)\\
 & =C_{k}\sum_{\left(\left(c_{i},d_{i},\left\{ a_{1}^{i},\ldots,a_{c_{i}}^{i}\right\} \right)\right)_{i=1}^{k+1}\in\mathcal{B}_{k}}\mathbb{E}_{n}\sum_{\Gamma=(\gamma_{1},\ldots,\gamma_{k})\in\mathcal{O}(\Gamma)}\ind\left(\frac{a}{\sqrt{n}}\leq\ell_{X}(\gamma_{1}),\ldots,\ell_{X}(\gamma_{k})\leq\frac{b}{\sqrt{n}}\right),
\end{align*}
where the mapping class group orbit $\mathcal{O}(\Gamma)$ is the
unique such orbit associated with a given ordered collection $\left(\left(c_{i},d_{i},\left\{ a_{1}^{i},\ldots,a_{c_{i}}^{i}\right\} \right)\right)_{i=1}^{k+1}$,
$C_{1}=\frac{1}{2}$ and $C_{k}=1$ for $k\geq2$. By Theorem \ref{thm:MIF},
we have 

\begin{align*}
 & \mathbb{E}_{n}\sum_{\Gamma=(\gamma_{1},\ldots,\gamma_{k})\in\mathcal{O}(\Gamma)}\ind\left(\frac{a}{\sqrt{n}}\leq\ell_{X}(\gamma_{1}),\ldots,\ell_{X}(\gamma_{k})\leq\frac{b}{\sqrt{n}}\right)\\
 & =\frac{1}{V_{n}}\int_{\frac{a}{\sqrt{n}}}^{\frac{b}{\sqrt{n}}}\cdots\int_{\frac{a}{\sqrt{n}}}^{\frac{b}{\sqrt{n}}}V_{c_{k+1}+k}(\boldsymbol{0}_{c_{k+1}},x_{1},\ldots,x_{k})\prod_{i=1}^{k}x_{i}V_{c_{i}+1}(\boldsymbol{0}_{c_{i}},x_{i})\mathrm{d}x_{1}\cdots\mathrm{d}x_{k}.
\end{align*}
By Lemma \ref{lem:volbds} we have 
\begin{align*}
V_{c_{i}+1} & \leqslant V_{c_{i}+1}(\boldsymbol{0}_{c_{i}},x_{i})\leqslant V_{c_{i}+1}\frac{\sinh\left(\frac{x_{i}}{2}\right)}{\left(\frac{x_{i}}{2}\right)},\\
V_{c_{k+1}+k} & \leqslant V_{c_{k+1}+k}(\boldsymbol{0}_{c_{k+1}},x_{1},\ldots,x_{k})\leqslant V_{c_{k+1}+k}\prod_{i=1}^{k}\frac{\sinh\left(\frac{x_{i}}{2}\right)}{\left(\frac{x_{i}}{2}\right)}.
\end{align*}
It then follows from the Taylor expansion 
\[
4\frac{\sinh^{2}\left(\frac{x_{i}}{2}\right)}{x_{i}}=x_{i}+O\left(x_{i}^{3}\right)=x_{i}+O\left(\frac{b^{3}}{n^{\frac{3}{2}}}\right),
\]
that when $x_{i}\leqslant\frac{b}{\sqrt{n}}$ we have
\[
V_{c_{k+1}+k}(\boldsymbol{0}_{c_{k+1}},x_{1},\ldots,x_{k})\prod_{i=1}^{k}x_{i}V_{c_{i}+1}(\boldsymbol{0}_{c_{i}},x_{i})=\left(\prod_{i=1}^{k}x_{i}\right)\left(1+O\left(\frac{b^{2}}{n}\right)\right)^{k}.
\]
We then see that 

\begin{align*}
 & \mathbb{E}_{n}\sum_{\Gamma=(\gamma_{1},\ldots,\gamma_{k})\in\mathcal{O}(\Gamma)}\ind\left(\frac{a}{\sqrt{n}}\leq\ell_{X}(\gamma_{1}),\ldots,\ell_{X}(\gamma_{k})\leq\frac{b}{\sqrt{n}}\right)\\
 & =\left(\frac{b^{2}-a^{2}}{2}\right)^{k}\frac{1}{n^{k}}\frac{V_{c_{1}+1}\cdots V_{c_{k}+1}V_{c_{k+1}+k}}{V_{n}}\left(1+O_{k}\left(\frac{b^{2}}{n}\right)\right),
\end{align*}
By definition of the collection $\mathcal{B}_{k}$ we have $c_{k+1}=n-\sum_{i=1}^{k}c_{i}$
and so

\begin{align*}
 & \mathbb{E}_{n}\left(N_{n,[a,b]}^{U,k}(X)\right)\\
 & =C_{k}\sum_{\left(\left(c_{i},d_{i},\left\{ a_{1}^{i},\ldots,a_{c_{i}}^{i}\right\} \right)\right)_{i=1}^{k+1}\in\mathcal{B}_{k}}\left(\frac{b^{2}-a^{2}}{2}\right)^{k}\frac{1}{n^{k}}\frac{V_{c_{1}+1}\cdots V_{c_{k}+1}V_{n-\sum_{i=1}^{k}c_{i}+k}}{V_{n}}\left(1+O_{k}\left(\frac{b^{2}}{n}\right)\right),
\end{align*}
where the summand is independent of the puncture labeling. When the
collection of tuples $((c_{i},1))_{i=1}^{k}$ are fixed, there is
a single choice of value for $c_{k+1}$ and ${n \choose c_{1},\ldots,c_{k}}$
choices of assigning the cusp labels to the subsurfaces according
to the definition of $\mathcal{B}_{k}$. In the case of $k\geq3$
the summation is thus equal to

\begin{align*}
 & \mathbb{E}_{n}\left(N_{n,[a,b]}^{U,k}(X)\right)\\
 & =\sum_{\substack{(c_{1},\ldots,c_{k})\in\mathbb{N}^{k},\\
\sum_{i=1}^{k}c_{i}\leq n,\\
c_{i}\geq2,
}
}{n \choose c_{1},\ldots,c_{k}}\left(\frac{b^{2}-a^{2}}{2}\right)^{k}\frac{1}{n^{k}}\frac{V_{c_{1}+1}\cdots V_{c_{k}+1}V_{n-\sum_{i=1}^{k}c_{i}+k}}{V_{n}}\left(1+O_{k}\left(\frac{b^{2}}{n}\right)\right).
\end{align*}
For $k=2,$ the requirement on the indices that $c_{k+1}+2\geq3,$
means the indexing runs over the replaced condition $c_{1}+c_{2}\leq n-1$.
Similarly for $k=1,$by Lemma \ref{lem:single-curve},
\begin{align*}
\mathbb{E}_{n}\left(N_{n,[a,b]}^{U}(X)\right) & =\frac{1}{2}\sum_{\left(\left(c_{i},d_{i},\left\{ a_{1}^{i},\ldots,a_{c_{i}}^{i}\right\} \right)\right)_{i=1}^{2}\in\mathcal{B}_{1}}{n \choose c_{1}}\left(\frac{b^{2}-a^{2}}{2}\right)\frac{1}{n}\frac{V_{c_{1}+1}V_{n-c_{1}+1}}{V_{n}}\left(1+O\left(\frac{b^{2}}{n}\right)\right)\\
 & =\sum_{c=2}^{\left\lfloor \frac{n}{2}\right\rfloor }{n \choose c}\left(\frac{b^{2}-a^{2}}{2}\right)\frac{1}{n}\frac{V_{c+1}V_{n-c+1}}{V_{n}}\left(1+O\left(\frac{b^{2}}{n}\right)\right).
\end{align*}
\end{proof}
We now determine the leading order of this summand.
\begin{prop}
\label{prop:Unnested contribution}Define 
\[
\alpha=\sum_{i=2}^{\infty}\frac{V_{i+1}}{i!}\left(\frac{x_{0}}{2\pi^{2}}\right)^{i-1},
\]
then,

\[
\mathbb{E}_{n}\left(N_{n,[a,b]}^{U,k}(X)\right)=\left(\frac{b^{2}-a^{2}}{2}\right)^{k}\alpha^{k}\left(1+O_{k}\left(\frac{1}{n^{\frac{1}{2}}}\right)\right).
\]
\end{prop}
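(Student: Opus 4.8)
The plan is to start from the asymptotic formulas in Lemma \ref{lem:unnested-exp-ub} and extract the leading term of the multinomial sum using the Manin--Zograf volume asymptotic (Theorem \ref{thm:ZografManin}). Concretely, for $k \geq 3$ we must analyze
\[
S_n \eqdf \sum_{\substack{(c_1,\ldots,c_k)\in\mathbb{N}^k\\ \sum c_i \leq n,\ c_i\geq 2}} {n \choose c_1,\ldots,c_k} \frac{V_{c_1+1}\cdots V_{c_k+1} V_{n-\sum c_i + k}}{n^k V_n},
\]
and show $S_n \to \alpha^k$. The heuristic is that the dominant contribution comes from the regime where each $c_i$ is bounded (of size $O(1)$) so that $n - \sum c_i + k$ is close to $n$; in that regime ${n \choose c_1,\ldots,c_k} \approx n^{c_1+\cdots+c_k}/(c_1!\cdots c_k!)$ and, by Theorem \ref{thm:ZografManin}, $V_{n-\sum c_i+k}/V_n \approx (n-\sum c_i+k)!/n!\cdot x_0^{\sum c_i - k} \approx n^{-(\sum c_i - k)} x_0^{\sum c_i - k}$. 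Multiplying, the powers of $n$ cancel (the $n^{c_1+\cdots+c_k}$ from the multinomial cancels against $n^{-(\sum c_i-k)} \cdot n^{-k}$), leaving $\prod_i \frac{V_{c_i+1} x_0^{c_i-1}}{c_i!}$, and summing each $c_i$ from $2$ to $\infty$ gives exactly $\alpha^k$.

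To make this rigorous I would proceed in steps. First, I would establish precise two-sided estimates: for $c_1,\ldots,c_k$ with $\sum c_i \leq n/2$ (say), use Stirling on ${n\choose c_1,\ldots,c_k}$ and Theorem \ref{thm:ZografManin} on $V_{n-\sum c_i+k}/V_n$ to show the summand equals $\prod_i \frac{V_{c_i+1}x_0^{c_i-1}}{c_i!}\bigl(1 + O_k(\tfrac{(\sum c_i)^2}{n})\bigr)$ when $\sum c_i = O(\sqrt n)$, and is bounded by $\prod_i \frac{V_{c_i+1} x_0^{c_i-1}}{c_i!} \cdot C^{\sum c_i}$ type expressions otherwise. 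Second, I would split the sum into the ``small'' range $\sum c_i \leq n^{1/2}$ and the ``large'' range $\sum c_i > n^{1/2}$ (but still $\leq n$; the complementary range where some $c_i$ is large, forcing $n - \sum c_i + k$ small, needs the $V_{m+1}\leq \gamma(m-2)V_m$ bound of Lemma \ref{lem:mirzakhani-vol-comparison} to control $V_{n-\sum c_i+k}$). In the small range, the product $\prod_i \frac{V_{c_i+1}x_0^{c_i-1}}{c_i!}$ summed over all $c_i \in [2,\infty)$ converges (by Theorem \ref{thm:ZografManin}, $V_{c+1}x_0^c/c! \sim B_0 c^{-7/2}(c+1)$, summable), and the tail beyond $n^{1/2}$ of this convergent series is $O_k(\text{rapidly decaying in } n)$; the $O_k((\sum c_i)^2/n)$ error, once summed against the convergent series weighted by $(\sum c_i)^2$, contributes $O_k(1/n)$. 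For the large range I would bound crudely: the number of tuples is polynomial, each volume ratio is controlled by the Manin--Zograf bound plus Lemma \ref{lem:mirzakhani-vol-comparison}, and the presence of at least $n^{1/2}$ total cusps distributed among the $c_i$ forces exponential decay in $\sqrt n$ via the factor $x_0^{\sum c_i}/(c_1!\cdots c_k!)$ against the multinomial coefficient. The cases $k=1,2$ are handled identically with the minor bookkeeping change that $c_{k+1}$ ranges over $\lfloor n/2\rfloor$-type bounds, which only affects a symmetry factor already accounted for in Lemma \ref{lem:unnested-exp-ub}.

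Finally, I would assemble: $S_n = \alpha^k + O_k(n^{-1/2})$, and combining with Lemma \ref{lem:unnested-exp-ub}'s overall factor $\left(\tfrac{b^2-a^2}{2}\right)^k(1 + O_k(b^2/n))$ yields
\[
\mathbb{E}_n\bigl(N_{n,[a,b]}^{U,k}(X)\bigr) = \left(\frac{b^2-a^2}{2}\right)^k \alpha^k\left(1 + O_k\bigl(n^{-1/2}\bigr)\right),
\]
as claimed. The main obstacle is the uniform control of the error terms and tail bounds across the full range of $(c_1,\ldots,c_k)$: one must carefully track the interplay between the Stirling approximation of the multinomial coefficient (valid when the $c_i$ are small relative to $n$) and the Manin--Zograf asymptotic for $V_{n-\sum c_i + k}/V_n$ (which degrades when $\sum c_i$ is comparable to $n$), and show that the ``boundary'' regimes where neither approximation is clean contribute negligibly. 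The choice of the threshold $n^{1/2}$ is what produces the stated $O_k(n^{-1/2})$ error rate, so getting the bookkeeping to actually deliver that exponent — rather than something weaker — is the delicate point.
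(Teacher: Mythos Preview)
Your strategy matches the paper's: start from Lemma~\ref{lem:unnested-exp-ub}, split the multinomial sum at a threshold of order $n^{1/2}$, use Theorem~\ref{thm:ZografManin} to extract the main term and bound the tails, and invoke Lemma~\ref{lem:mirzakhani-vol-comparison} in the regime where $\sum c_i$ is comparable to $n$. One correction worth making before you write it up: the tail decay is only polynomial, not exponential --- by Theorem~\ref{thm:ZografManin} one has $V_{c+1}x_0^{c-1}/c! \asymp c^{-5/2}$, so the tail of the series past $c \sim n^{1/2}$ is $O(n^{-3/4})$, and likewise the large-range contribution to $S_n$ decays polynomially --- but this is still ample for the claimed $O_k(n^{-1/2})$.
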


\begin{proof}
We first consider the case when $k\geq3$, the result is similar for
$k=1$ and $2$. By Lemma \ref{lem:unnested-exp-ub}, it suffices
to show that
\[
\sum_{\substack{(c_{1},\ldots,c_{k})\in\mathbb{N}^{k},\\
\sum_{i=1}^{k}c_{i}\leq n,\\
c_{i}\geq2.
}
}{n \choose c_{1},\ldots,c_{k}}\frac{1}{n^{k}}\frac{V_{c_{1}+1}\cdots V_{c_{k}+1}V_{n-\sum_{i=1}^{k}c_{i}+k}}{V_{n}}=\alpha^{k}+O_{k}\left(\frac{1}{\sqrt{n}}\right).
\]
 Observe first that 
\begin{align*}
\alpha^{k} & =\sum_{c_{1}=2}^{\infty}\cdots\sum_{c_{k}=2}^{\infty}\left(\frac{x_{0}}{2\pi^{2}}\right)^{\sum_{i=1}^{k}\left(c_{i}-1\right)}\prod_{i=1}^{k}\frac{V_{c_{1}+1}}{c_{i}!}\\
 & =\sum_{c_{1}=2}^{\lfloor n^{\frac{1}{2}}\rfloor}\cdots\sum_{c_{k}=2}^{\lfloor n^{\frac{1}{2}}\rfloor}\left(\frac{x_{0}}{2\pi^{2}}\right)^{\sum_{i=1}^{k}\left(c_{i}-1\right)}\prod_{i=1}^{k}\frac{V_{c_{1}+1}}{c_{i}!}+\sum_{\substack{(c_{1},\ldots,c_{k})\in\mathbb{N}^{k}\\
\exists\,i\,:c_{i}\geq\lceil n^{\frac{1}{2}}\rceil\\
c_{j}\geq2
}
}\left(\frac{x_{0}}{2\pi^{2}}\right)^{\sum_{i=1}^{k}\left(c_{i}-1\right)}\prod_{i=1}^{k}\frac{V_{c_{1}+1}}{c_{i}!}\\
 & \leq\sum_{c_{1}=2}^{\lfloor n^{\frac{1}{2}}\rfloor}\cdots\sum_{c_{k}=2}^{\lfloor n^{\frac{1}{2}}\rfloor}\left(\frac{x_{0}}{2\pi^{2}}\right)^{\sum_{i=1}^{k}\left(c_{i}-1\right)}\prod_{i=1}^{k}\frac{V_{c_{1}+1}}{c_{i}!}+k\sum_{c_{1}=\lceil n^{\frac{1}{2}}\rceil}^{\infty}\sum_{c_{2}=2}^{\infty}\cdots\sum_{c_{k}=2}^{\infty}\left(\frac{x_{0}}{2\pi^{2}}\right)^{\sum_{i=1}^{k}\left(c_{i}-1\right)}\prod_{i=1}^{k}\frac{V_{c_{1}+1}}{c_{i}!}.
\end{align*}
Thus, 
\begin{align*}
 & \left|\sum_{\substack{(c_{1},\ldots,c_{k})\in\mathbb{N}^{k}\\
\sum_{i=1}^{k}c_{i}\leq n\\
c_{i}\geq2
}
}{n \choose c_{1},\ldots,c_{k}}\frac{1}{n^{k}}\frac{V_{c_{1}+1}\cdots V_{c_{k}+1}V_{n-\sum_{i=1}^{k}c_{i}+k}}{V_{n}}-\alpha^{k}\right|\\
\leqslant & \left|\sum_{\substack{(c_{1},\ldots,c_{k})\in\mathbb{N}^{k}\\
2\leq c_{i}\leq n^{\frac{1}{2}}
}
}{n \choose c_{1},\ldots,c_{k}}\frac{1}{n^{k}}\frac{V_{c_{1}+1}\cdots V_{c_{k}+1}V_{n-\sum_{i=1}^{k}c_{i}+k}}{V_{n}}-\alpha^{k}\right|\\
 & +\left|\sum_{\substack{(c_{1},\ldots,c_{k})\in\mathbb{N}^{k}\\
\exists\,i\,:\,c_{i}\geq n^{\frac{1}{2}}\\
c_{i}\geq2\\
\sum_{i=1}^{k}c_{i}\leq n
}
}{n \choose c_{1},\ldots,c_{k}}\frac{1}{n^{k}}\frac{V_{c_{1}+1}\cdots V_{c_{k}+1}V_{n-\sum_{i=1}^{k}c_{i}+k}}{V_{n}}\right|\\
\leqslant & \underbrace{\left|\sum_{\substack{(c_{1},\ldots,c_{k})\in\mathbb{N}^{k}\\
2\leq c_{i}\leq n^{\frac{1}{2}}
}
}{n \choose c_{1},\ldots,c_{k}}\frac{1}{n^{k}}\frac{V_{c_{1}+1}\cdots V_{c_{k}+1}V_{n-\sum_{i=1}^{k}c_{i}+k}}{V_{n}}-\sum_{c_{1}=2}^{\lfloor n^{\frac{1}{2}}\rfloor}\cdots\sum_{c_{k}=2}^{\lfloor n^{\frac{1}{2}}\rfloor}\left(\frac{x_{0}}{2\pi^{2}}\right)^{\sum_{i=1}^{k}\left(c_{i}-1\right)}\prod_{i=1}^{k}\frac{V_{c_{1}+1}}{c_{i}!}\right|}_{(a)}\\
 & +\underbrace{k\sum_{c_{1}=\lceil n^{\frac{1}{2}}\rceil}^{\infty}\sum_{c_{2}=2}^{\infty}\cdots\sum_{c_{k}=2}^{\infty}\left(\frac{x_{0}}{2\pi^{2}}\right)^{\sum_{i=1}^{k}\left(c_{i}-1\right)}\prod_{i=1}^{k}\frac{V_{c_{1}+1}}{c_{i}!}}_{(b)}+\underbrace{\sum_{\substack{(c_{1},\ldots,c_{k})\in\mathbb{N}^{k}\\
\exists\,i\,:\,c_{i}\geq n^{\frac{1}{2}}\\
c_{i}\geq2\\
\sum_{i=1}^{k}c_{i}\leq n
}
}{n \choose c_{1},\ldots,c_{k}}\frac{1}{n^{k}}\frac{V_{c_{1}+1}\cdots V_{c_{k}+1}V_{n-\sum_{i=1}^{k}c_{i}+k}}{V_{n}}}_{(c)}.
\end{align*}
We first bound the contribution of $(a)$. We calculate that 
\begin{align*}
 & \left|\sum_{\substack{(c_{1},\ldots,c_{k})\in\mathbb{N}^{k}\\
2\leq c_{i}\leq n^{\frac{1}{2}}
}
}{n \choose c_{1},\ldots,c_{k}}\frac{1}{n^{k}}\frac{V_{c_{1}+1}\cdots V_{c_{k}+1}V_{n-\sum_{i=1}^{k}c_{i}+k}}{V_{n}}-\sum_{c_{1}=2}^{\lfloor n^{\frac{1}{2}}\rfloor}\cdots\sum_{c_{k}=2}^{\lfloor n^{\frac{1}{2}}\rfloor}\left(\frac{x_{0}}{2\pi^{2}}\right)^{\sum_{i=1}^{k}\left(c_{i}-1\right)}\prod_{i=1}^{k}\frac{V_{c_{1}+1}}{c_{i}!}\right|\\
= & \left|\sum_{c_{1}=2}^{\lfloor n^{\frac{1}{2}}\rfloor}\cdots\sum_{c_{k}=2}^{\lfloor n^{\frac{1}{2}}\rfloor}\left(\frac{V_{c_{1}+1}\cdot\cdots\cdot V_{c_{k}+1}\cdot V_{n-\sum_{i=1}^{k}c_{i}+k}}{c_{1}!\cdot\cdots\cdot c_{k}!\left(n-\sum_{i=1}^{k}c_{i}\right)!}\frac{n!}{n^{k}V_{n}}-\frac{V_{c_{1}+1}\cdot\cdots\cdot V_{c_{k}+1}}{c_{1}!\cdot\cdots\cdot c_{k}!}\left(\frac{x_{0}}{2\pi^{2}}\right)^{\sum_{i=1}^{k}\left(c_{i}-1\right)}\right)\right|\\
\leqslant & \sum_{c_{1}=2}^{\lfloor n^{\frac{1}{2}}\rfloor}\cdots\sum_{c_{k}=2}^{\lfloor n^{\frac{1}{2}}\rfloor}\frac{V_{c_{1}+1}\cdot\cdots\cdot V_{c_{k}+1}}{c_{1}!\cdot\cdots\cdot c_{k}!}\left|\frac{n!V_{n-\sum_{i=1}^{k}c_{i}+k}}{n^{k}V_{n}\left(n-\sum_{i=1}^{k}c_{i}\right)!}-\left(\frac{x_{0}}{2\pi^{2}}\right)^{\sum_{i=1}^{k}\left(c_{i}-1\right)}\right|.
\end{align*}
Since each $c_{i}\leqslant n^{\frac{1}{2}},$ we can calculate using
the volume asymptotics of Theorem \ref{thm:ZografManin} that 
\begin{align*}
 & \left|\frac{n!V_{n-\sum_{i=1}^{k}c_{i}+k}}{n^{k}V_{n}\left(n-\sum_{i=1}^{k}c_{i}\right)!}-\left(\frac{x_{0}}{2\pi^{2}}\right)^{\sum_{i=1}^{k}\left(c_{i}-1\right)}\right|\\
= & \left|\frac{n^{\frac{7}{2}}\left(n-\sum_{i=1}^{k}c_{i}+k\right)!x_{0}^{n}(2\pi^{2})^{n-\sum_{i=1}^{k}c_{i}+k-3}}{n^{k}\left(n-\sum_{i=1}^{k}c_{i}+k\right)^{\frac{7}{2}}\left(n-\sum_{i=1}^{k}c_{i}\right)!x_{0}^{n-\sum_{i=1}^{k}c_{i}+k}(2\pi^{2})^{n-3}}\left(\frac{B_{0}+O\left(\frac{1}{n-\sum_{i=1}^{k}c_{i}+k}\right)}{B_{0}+O\left(\frac{1}{n}\right)}\right)-\left(\frac{x_{0}}{2\pi^{2}}\right)^{\sum_{i=1}^{k}\left(c_{i}-1\right)}\right|\\
= & \left(\frac{x_{0}}{2\pi^{2}}\right)^{\sum_{i=1}^{k}\left(c_{i}-1\right)}\left|\frac{n^{\frac{7}{2}}}{\left(n-\sum_{i=1}^{k}c_{i}+k\right)^{\frac{7}{2}}}\frac{\left(n-\sum_{i=1}^{k}c_{i}+k\right)!}{n^{k}\left(n-\sum_{i=1}^{k}c_{i}\right)!}\left(1+O\left(\frac{1}{n}\right)\right)-1\right|\\
= & \left(\frac{x_{0}}{2\pi^{2}}\right)^{\sum_{i=1}^{k}\left(c_{i}-1\right)}\left|\left(1+\frac{\sum_{i=1}^{k}c_{i}-k}{n-\sum_{i=1}^{k}c_{i}+k}\right)^{\frac{7}{2}}\left(1-\frac{\sum_{i=1}^{k}c_{i}-k}{n}\right)\cdot\cdots\cdot\left(1-\frac{\sum_{i=1}^{k}c_{i}-1}{n}\right)\left(1+O\left(\frac{1}{n}\right)\right)-1\right|\\
= & \left(\frac{x_{0}}{2\pi^{2}}\right)^{\sum_{i=1}^{k}\left(c_{i}-1\right)}\left|\left(1+O\left(\frac{k^{2}}{n^{\frac{1}{2}}}\right)\right)-1\right|=O\left(\frac{k^{2}}{n^{\frac{1}{2}}}\left(\frac{x_{0}}{2\pi^{2}}\right)^{\sum_{i=1}^{k}\left(c_{i}-1\right)}\right).
\end{align*}
Thus we can control the contribution of $(a)$ by 
\begin{align*}
 & \sum_{c_{1}=2}^{\lfloor n^{\frac{1}{2}}\rfloor}\cdots\sum_{c_{k}=2}^{\lfloor n^{\frac{1}{2}}\rfloor}\frac{V_{c_{1}+1}\cdot\cdots\cdot V_{c_{k}+1}}{c_{1}!\cdot\cdots\cdot c_{k}!}\left|\frac{nV_{n-\sum_{i=1}^{k}c_{i}+k}}{n^{k}V_{n}\left(n-\sum_{i=1}^{k}c_{i}\right)!}-\left(\frac{x_{0}}{2\pi^{2}}\right)^{\sum_{i=1}^{k}\left(c_{i}-1\right)}\right|\\
=O & \left(\frac{k^{2}}{n^{\frac{1}{2}}}\sum_{c_{1}=2}^{\lfloor n^{\frac{1}{2}}\rfloor}\cdots\sum_{c_{k}=2}^{\lfloor n^{\frac{1}{2}}\rfloor}\frac{V_{c_{1}+1}\cdot\cdots\cdot V_{c_{k}+1}}{c_{1}!\cdot\cdots\cdot c_{k}!}\left(\frac{x_{0}}{2\pi^{2}}\right)^{\sum_{i=1}^{k}\left(c_{i}-1\right)}\right)=O\left(\frac{k^{2}\alpha^{k}}{\sqrt{n}}\right).
\end{align*}
To bound the contribution of $(b)$ we notice that 
\begin{align}
\sum_{c_{1}=\lceil n^{\frac{1}{2}}\rceil}^{\infty}\sum_{c_{2}=2}^{\infty}\cdots\sum_{c_{k}=2}^{\infty}\left(\frac{x_{0}}{2\pi^{2}}\right)^{\sum_{i=1}^{k}\left(c_{i}-1\right)}\prod_{i=1}^{k}\frac{V_{c_{i}+1}}{c_{i}!} & =\left(\sum_{c=\lceil n^{\frac{1}{2}}\rceil}^{\infty}\frac{V_{c+1}x_{0}^{c-1}}{c!(2\pi^{2})^{c-1}}\right)\alpha^{k-1}.\label{eq:b-equality}
\end{align}
Applying volume estimates of Theorem \ref{thm:ZografManin}, we can
control the summation by 
\begin{align}
\sum_{c=\lceil n^{\frac{1}{2}}\rceil}^{\infty}\frac{V_{c+1}x_{0}^{c-1}}{c!(2\pi^{2})^{c-1}} & =\frac{x_{0}^{-2}}{2\pi^{2}}\left(B_{0}+O\left(\frac{1}{n^{\frac{1}{2}}}\right)\right)\sum_{c=\lceil n^{\frac{1}{2}}\rceil}^{\infty}\frac{c+1}{\left(c+2\right)^{\frac{7}{2}}}\leq\frac{x_{0}^{-2}}{2\pi^{2}}\left(B_{0}+O\left(\frac{1}{n^{\frac{1}{2}}}\right)\right)\frac{1}{n^{\frac{3}{4}}}=O\left(\frac{1}{n^{\frac{3}{4}}}\right),\label{eq:nsqrt-infty-sum-estimate}
\end{align}
which together with (\ref{eq:b-equality}) shows that $(b)=O\left(\frac{\alpha^{k-1}}{n^{\frac{3}{4}}}\right)$.
Finally we control the contribution of $(c)$. First we look at 
\[
\sum_{\substack{(c_{1},\ldots,c_{k})\in\mathbb{N}^{k}\\
\exists\,i\,:\,c_{j}\geq n^{\frac{1}{2}}\\
\sum_{i=1}^{k}c_{i}\leqslant\frac{n}{2}\\
c_{i}\geq2
}
}{n \choose c_{1},\ldots,c_{k}}\frac{1}{n^{k}}\frac{V_{c_{1}+1}\cdots V_{c_{k}+1}V_{n-\sum_{i=1}^{k}c_{i}+k}}{V_{n}}.
\]
If $\sum_{i=1}^{k}c_{i}\leqslant\frac{n}{2}$, then by the volume
estimates of Theorem \ref{thm:ZografManin}, we have
\begin{align*}
\frac{V_{n-\sum_{i=1}^{k}c_{i}+k}}{\left(n-\sum_{i=1}^{k}c_{i}\right)!}\frac{n!}{n^{k}V_{n}} & =\frac{\left(n-\sum_{i=1}^{k}c_{i}+k\right)!}{n^{k}\left(n-\sum_{i=1}^{k}c_{i}\right)!}\frac{(n+1)^{\frac{7}{2}}}{\left(n-\sum_{i=1}^{k}c_{i}+k+1\right)^{\frac{7}{2}}}\left(\frac{x_{0}}{2\pi^{2}}\right)^{\sum_{i=1}^{k}\left(c_{i}-1\right)}\left(1+O\left(\frac{1}{n}\right)\right)\\
 & \leq\left(\frac{x_{0}}{2\pi^{2}}\right)^{\sum_{i=1}^{k}\left(c_{i}-1\right)}\left(1+O\left(\frac{1}{n}\right)\right).
\end{align*}
Then 
\begin{align*}
 & \sum_{\substack{(c_{1},\ldots,c_{k})\in\mathbb{N}^{k}\\
\exists\,i\,:\,c_{j}\geq n^{\frac{1}{2}}\\
\sum_{i=1}^{k}c_{i}\leqslant\frac{n}{2}\\
c_{i}\geq2
}
}{n \choose c_{1},\ldots,c_{k}}\frac{1}{n^{k}}\frac{V_{c_{1}+1}\cdots V_{c_{k}+1}V_{n-\sum_{i=1}^{k}c_{i}+k}}{V_{n}}\\
 & \leq\sum_{\substack{(c_{1},\ldots,c_{k})\in\mathbb{N}^{k}\\
\exists\,i\,:\,c_{j}\geq n^{\frac{1}{2}}\\
\sum_{i=1}^{k}c_{i}\leqslant\frac{n}{2}\\
c_{i}\geq2
}
}\frac{V_{c_{1}+1}\left(\frac{x_{0}}{2\pi^{2}}\right)^{c_{1}-1}\cdot\cdots\cdot V_{c_{k}+1}\left(\frac{x_{0}}{2\pi^{2}}\right)^{c_{k}-1}}{c_{1}!\cdot\cdots\cdot c_{k}!}\left(1+O\left(\frac{1}{n}\right)\right)\\
 & \leq k\left(\sum_{c=\lceil n^{\frac{1}{2}}\rceil}^{\infty}\frac{V_{c_{1}+1}x_{0}^{c-1}}{c!(2\pi^{2})^{c-1}}\right)\alpha^{k-1}\left(1+O\left(\frac{1}{n}\right)\right)=O\left(\frac{k\alpha^{k-1}}{n^{\frac{3}{4}}}\right),
\end{align*}
where the last bound follows from (\ref{eq:nsqrt-infty-sum-estimate}).
We now consider the contribution from when $\sum_{i=1}^{k}c_{i}\geq\frac{n}{2}$,
that is, of the term
\begin{equation}
\sum_{\substack{(c_{1},\ldots,c_{k})\in\mathbb{N}^{k}\\
\frac{n}{2}\leqslant\sum_{i=1}^{k}c_{i}\leqslant n\\
c_{i}\geq2
}
}{n \choose c_{1},\ldots,c_{k}}\frac{1}{n^{k}}\frac{V_{c_{1}+1}\cdots V_{c_{k}+1}V_{n-\sum_{i=1}^{k}c_{i}+k}}{V_{n}}.\label{eq:eq-c-for-large-sum}
\end{equation}
Since $\sum_{i=1}^{k}c_{i}\geqslant\frac{n}{2}$, there exists some
$\ell$ such that $c_{\ell}\geqslant\frac{n}{2k}$, and thus it follows
that 
\begin{align*}
 & \sum_{\substack{(c_{1},\ldots,c_{k})\in\mathbb{N}^{k}\\
\frac{n}{2}\leqslant\sum_{i=1}^{k}c_{i}\leqslant n\\
c_{i}\geq2
}
}\frac{V_{c_{1}+1}\cdot\cdots\cdot V_{c_{k}+1}\cdot V_{n-\sum_{i=1}^{k}c_{i}+k}}{c_{1}!\cdot\cdots\cdot c_{k}!\left(n-\sum_{i=1}^{k}c_{i}\right)!}\frac{n!}{n^{k}V_{n}}\\
 & \leq k\sum_{\substack{(c_{1},\ldots,c_{k})\in\mathbb{N}^{k}\\
\frac{n}{2}\leqslant\sum_{i=1}^{k}c_{i}\leqslant n\\
c_{1}\geq\left\lfloor \frac{n}{2k}\right\rfloor \\
c_{i}\geq2
}
}\frac{V_{c_{1}+1}\cdot\cdots\cdot V_{c_{k}+1}\cdot V_{n-\sum_{i=1}^{k}c_{i}+k}}{c_{1}!\cdot\cdots\cdot c_{k}!\left(n-\sum_{i=1}^{k}c_{i}\right)!}\frac{n!}{n^{k}V_{n}}.\\
 & =k\sum_{\substack{(c_{1},\ldots,c_{k},c_{k+1})\in\mathbb{N}^{k+1}\\
\sum_{i=1}^{k+1}c_{i}=n\\
c_{1}\geq\left\lfloor \frac{n}{2k}\right\rfloor \\
0\leq c_{k+1}\leq\frac{n}{2}\\
c_{i}\geq2,\ i<k+1
}
}\frac{V_{c_{1}+1}\cdot\cdots\cdot V_{c_{k}+1}\cdot V_{c_{k+1}+k}}{c_{1}!\cdot\cdots\cdot c_{k}!c_{k+1}!}\frac{n!}{n^{k}V_{n}}
\end{align*}
By the volume estimate of Theorem \ref{thm:ZografManin} we then obtain
\[
\frac{V_{c_{1}+1}}{c_{1}!}\frac{n!}{n^{k}V_{n}}=\frac{(n+1)^{\frac{7}{2}}}{(c_{1}+2)^{\frac{5}{2}}}\frac{1}{n^{k}}\left(\frac{x_{0}}{2\pi^{2}}\right)^{n-c_{1}-1}\left(1+O\left(\frac{k}{n}\right)\right)=\frac{1}{n^{k-1}}\left(\frac{x_{0}}{2\pi^{2}}\right)^{\sum_{i=2}^{k+1}c_{i}-1}\left(1+O\left(\frac{k}{n}\right)\right).
\]
Moreover, by Lemma \ref{lem:mirzakhani-vol-comparison}, there exists
a constant $\gamma>0$ such that 
\[
V_{c_{k+1}+k}\leqslant\gamma^{k-3}\left(c_{k+1}+k\right)^{k-3}V_{c_{k+1}+3}.
\]
Hence,
\begin{align*}
 & k\sum_{\substack{(c_{1},\ldots,c_{k},c_{k+1})\in\mathbb{N}^{k+1}\\
\sum_{i=1}^{k+1}c_{i}=n\\
c_{1}\geq\left\lfloor \frac{n}{2k}\right\rfloor \\
0\leq c_{k+1}\leq\frac{n}{2}\\
c_{i}\geq2,\ i<k+1
}
}\frac{V_{c_{1}+1}\cdot\cdots\cdot V_{c_{k}+1}\cdot V_{c_{k+1}+k}}{c_{1}!\cdot\cdots\cdot c_{k}!c_{k+1}!}\frac{n!}{n^{k}V_{n}}\\
 & \leq k\sum_{\substack{(c_{2},\ldots,c_{k},c_{k+1})\in\mathbb{N}^{k}\\
\sum_{i=2}^{k+1}c_{i}\leq n-\left\lfloor \frac{n}{2k}\right\rfloor \\
0\leq c_{k+1}\leq\frac{n}{2}\\
c_{i}\geq2,\ i<k+1
}
}\frac{V_{c_{2}+1}\cdots V_{c_{k+1}+k}}{c_{2}!\cdots c_{k+1}!}\left(\frac{x_{0}}{2\pi^{2}}\right)^{\sum_{i=2}^{k+1}c_{i}-1}\frac{1}{n^{k-1}}\left(1+O\left(\frac{k}{n}\right)\right)\\
 & \leq\gamma^{k-3}k\sum_{\substack{(c_{2},\ldots,c_{k},c_{k+1})\in\mathbb{N}^{k}\\
\sum_{i=2}^{k+1}c_{i}\leq n-\left\lfloor \frac{n}{2k}\right\rfloor \\
0\leq c_{k+1}\leq\frac{n}{2}\\
c_{i}\geq2,\ i<k+1
}
}\frac{V_{c_{2}+1}\cdots V_{c_{k+1}+3}}{c_{2}!\cdots(c_{k+1}+1)!}\left(\frac{x_{0}}{2\pi^{2}}\right)^{\sum_{i=2}^{k+1}c_{i}-1}\frac{(c_{k+1}+k)^{k-2}}{n^{k-1}}\left(1+O\left(\frac{k}{n}\right)\right)
\end{align*}
Since $c_{k+1}\leq\frac{n}{2}$, for $n$ sufficiently large (i.e.
for $k\leq\frac{n}{2}$) we have $\frac{(c_{k+1}+k)^{k-2}}{n^{k-1}}\leq\frac{1}{n}$
and thus
\begin{align*}
 & \sum_{\substack{(c_{1},\ldots,c_{k})\in\mathbb{N}^{k}\\
\frac{n}{2}\leqslant\sum_{i=1}^{k}c_{i}\leqslant n\\
c_{i}\geq2
}
}\frac{V_{c_{1}+1}\cdot\cdots\cdot V_{c_{k}+1}\cdot V_{n-\sum_{i=1}^{k}c_{i}+k}}{c_{1}!\cdot\cdots\cdot c_{k}!\left(n-\sum_{i=1}^{k}c_{i}\right)!}\frac{n!}{n^{k}V_{n}}\\
 & \leq\frac{\gamma^{k-3}k}{n}\sum_{\substack{(c_{2},\ldots,c_{k},c_{k+1})\in\mathbb{N}^{k}\\
\sum_{i=2}^{k+1}c_{i}\leq n-\left\lfloor \frac{n}{2k}\right\rfloor \\
0\leq c_{k+1}\leq\frac{n}{2}\\
c_{i}\geq2,\ i<k+1
}
}\frac{V_{c_{2}+1}\cdots V_{c_{k+1}+3}}{c_{2}!\cdots(c_{k+1}+1)!}\left(\frac{x_{0}}{2\pi^{2}}\right)^{\sum_{i=2}^{k+1}c_{i}-1}\left(1+O\left(\frac{k}{n}\right)\right)\\
 & \leq\frac{\gamma^{k-3}k}{n}\left(\sum_{c_{k+1}=0}^{\left\lfloor \frac{n}{2}\right\rfloor }\frac{V_{c_{k+1}+3}}{(c_{k+1}+1)!}\left(\frac{x_{0}}{2\pi^{2}}\right)^{c_{k+1}-1}\right)\left(\sum_{c_{2}=2}^{\infty}\cdots\sum_{c_{k}=2}^{\infty}\left(\frac{x_{0}}{2\pi^{2}}\right)^{\sum_{i=2}^{k}c_{i}}\prod_{i=2}^{k}\frac{V_{c_{i}+1}}{c_{i}!}\right)\left(1+O\left(\frac{k}{n}\right)\right)\\
 & \leq\frac{\gamma^{k-3}k\alpha^{k-1}}{n}\left(\frac{x_{0}}{2\pi^{2}}\right)^{k-1}\left(\sum_{c_{k+1}=0}^{\infty}\frac{V_{c_{k+1}+3}}{(c_{k+1}+1)!}\left(\frac{x_{0}}{2\pi^{2}}\right)^{c_{k+1}-1}\right)\left(1+O\left(\frac{k}{n}\right)\right)
\end{align*}
This latter summation converges since for sufficiently large $c_{k+1}$,
we have the asymptotic from Theorem \ref{thm:ZografManin} asserting
that 
\[
\frac{V_{c_{k+1}+3}}{(c_{k+1}+1)!}\left(\frac{x_{0}}{2\pi^{2}}\right)^{c_{k+1}-1}=2\pi^{2}x_{0}^{-4}\frac{1}{c_{k+1}^{\frac{3}{2}}}\left(1+O\left(\frac{1}{c_{k+1}}\right)\right).
\]
In summary, we have that

\begin{align*}
(a) & =O\left(\frac{k^{2}\alpha^{k}}{\sqrt{n}}\right),\\
(b) & =O\left(\frac{\alpha^{k-1}}{n^{\frac{3}{4}}}\right),\\
(c) & =O\left(\frac{\gamma^{k-3}k\alpha^{k-1}}{n}+\frac{k\alpha^{k-1}}{n^{\frac{3}{4}}}\right),
\end{align*}
and hence the result follows for $k\geq3$.

For $k=1$, by Lemma \ref{lem:unnested-exp-ub} it is sufficient to
show
\[
\sum_{c=2}^{\left\lfloor \frac{n}{2}\right\rfloor }{n \choose c}\frac{1}{n}\frac{V_{c+1}V_{n-c+1}}{V_{n}}=\alpha+O\left(\frac{1}{\sqrt{n}}\right).
\]
As before we observe that 
\begin{align*}
 & \left|\sum_{c=2}^{\left\lfloor \frac{n}{2}\right\rfloor }{n \choose c}\frac{1}{n}\frac{V_{c+1}V_{n-c+1}}{V_{n}}-\alpha\right|\\
 & \leq\underbrace{\left|\sum_{c=2}^{\left\lfloor \sqrt{n}\right\rfloor }{n \choose c}\frac{1}{n}\frac{V_{c+1}V_{n-c+1}}{V_{n}}-\frac{V_{c+1}}{c!}\left(\frac{x_{0}}{2\pi^{2}}\right)^{c-1}\right|}_{(a)}+\underbrace{\sum_{c=\left\lfloor \sqrt{n}\right\rfloor +1}^{\infty}\frac{V_{c+1}}{c!}\left(\frac{x_{0}}{2\pi^{2}}\right)^{c-1}}_{(b)}+\underbrace{\sum_{c=\left\lfloor \sqrt{n}\right\rfloor +1}^{\left\lfloor \frac{n}{2}\right\rfloor }\frac{1}{n}\frac{V_{c+1}V_{n-c+1}n!}{V_{n}(n-c)!c!}}_{(c)}.
\end{align*}
Identically to the analysis of the terms (a) and (b) for $k\geq3$,
we obtain 
\begin{align*}
(a) & =O\left(\frac{1}{\sqrt{n}}\right),\\
(b) & =O\left(\frac{1}{n^{\frac{3}{4}}}\right).
\end{align*}
The bound for (c) is simpler than before since because $c\leq\frac{n}{2}$,
it follows that $n-c+1\geq\frac{n}{2}$ and so 
\[
\frac{n!V_{n-c+1}}{(n-c)!nV_{n}}=\left(\frac{x_{0}}{2\pi^{2}}\right)^{c-1}\left(1+O\left(\frac{1}{n}\right)\right).
\]
Thus by Theorem \ref{thm:ZografManin} 
\[
\sum_{c=\left\lfloor \sqrt{n}\right\rfloor +1}^{\left\lfloor \frac{n}{2}\right\rfloor }\frac{1}{n}\frac{V_{c+1}V_{n-c+1}n!}{V_{n}(n-c)!c!}=\sum_{c=\left\lfloor \sqrt{n}\right\rfloor +1}^{\left\lfloor \frac{n}{2}\right\rfloor }\frac{V_{c+1}}{c!}\left(\frac{x_{0}}{2\pi^{2}}\right)^{c-1}\left(1+O\left(\frac{1}{n}\right)\right)=O\left(\frac{1}{n^{\frac{3}{4}}}\right).
\]
This leaves the case for $k=2$, where again by Lemma \ref{lem:unnested-exp-ub},
it is sufficient to show

\[
\sum_{\substack{\substack{(c_{1},}
c_{2})\in\mathbb{N}^{2},\\
c_{1}+c_{2}\leq n-1,\\
c_{i}\geq2
}
}{n \choose c_{1},c_{2}}\frac{1}{n^{2}}\frac{V_{c_{1}+1}V_{c_{2}+1}V_{n-c_{1}-c_{2}+2}}{V_{n}}=\alpha^{2}+O\left(\frac{1}{\sqrt{n}}\right).
\]
We observe that 
\begin{align*}
 & \left|\sum_{\substack{\substack{(c_{1},}
c_{2})\in\mathbb{N}^{2},\\
c_{1}+c_{2}\leq n-1,\\
c_{i}\geq2
}
}{n \choose c_{1},c_{2}}\frac{1}{n^{2}}\frac{V_{c_{1}+1}V_{c_{2}+1}V_{n-c_{1}-c_{2}+2}}{V_{n}}-\alpha^{2}\right|\\
 & \leq\underbrace{\left|\sum_{c_{1}=2}^{\left\lfloor \sqrt{n}\right\rfloor }\sum_{c_{2}=2}^{\left\lfloor \sqrt{n}\right\rfloor }{n \choose c_{1},c_{2}}\frac{1}{n^{2}}\frac{V_{c_{1}+1}V_{c_{2}+1}V_{n-c_{1}-c_{2}+2}}{V_{n}}-\left(\frac{x_{0}}{2\pi^{2}}\right)^{c_{1}-1+c_{2}-1}\frac{V_{c_{1}+1}V_{c_{2}+1}}{c_{1}!c_{2}!}\right|}_{(a)}\\
 & +2\underbrace{\sum_{c_{1}=\left\lfloor \sqrt{n}\right\rfloor }^{\infty}\sum_{c_{2}=2}^{\infty}\left(\frac{x_{0}}{2\pi^{2}}\right)^{c_{1}-1+c_{2}-1}\frac{V_{c_{1}+1}V_{c_{2}+1}}{c_{1}!c_{2}!}}_{(b)}+\underbrace{\sum_{\substack{\substack{(c_{1},}
c_{2})\in\mathbb{N}^{2},\\
c_{1}+c_{2}\leq n-1,\\
c_{i}\geq2\\
\exists\,i\,:c_{i}\geq\sqrt{n}
}
}{n \choose c_{1},c_{2}}\frac{1}{n^{2}}\frac{V_{c_{1}+1}V_{c_{2}+1}V_{n-c_{1}-c_{2}+2}}{V_{n}}}_{(c)}.
\end{align*}

The bounds on each of these terms follow very similarly to the case
when $k\geq3$. The only difference is for (c), where now the analogue
of $c_{3}$ takes values between $1$ and $\left\lfloor \frac{n}{2}\right\rfloor $,
and there is no need to use the Mirzakhani volume bounds from Lemma
\ref{lem:mirzakhani-vol-comparison} on the $V_{c_{3}+2}$ term.
\end{proof}
We now conclude the proof of Theorem \ref{thm:main-thm} for $\ell=1$.
\begin{proof}[Proof of Theorem \ref{thm:main-thm} for $\ell=1$]
 Combining (\ref{eq:nested + unnested}), Proposition \ref{prop:Nested contribution}
and Proposition \ref{prop:Unnested contribution}, we see that for
fixed $k$,
\[
\mathbb{E}_{n}\left(\left(N_{n,[a,b]}(X)\right)_{k}\right)=\left(\frac{b^{2}-a^{2}}{2}\right)^{k}\alpha^{k}\left(1+O_{k}\left(\frac{1}{n^{\frac{1}{2}}}\right)\right)+O_{k}\left(\frac{b^{2k}}{n}\right)\to\left(\frac{b^{2}-a^{2}}{2}\right)^{k}\alpha^{k},
\]
as $n\to\infty$ and so Theorem \ref{thm:main-thm} follows from Proposition
\ref{prop:factorial-moments} with the mean
\[
\frac{b^{2}-a^{2}}{2}\alpha.
\]
To see that $\alpha=\sum_{i=2}^{\infty}\frac{V_{i+1}}{i!}\left(\frac{x_{0}}{2\pi^{2}}\right)^{i-1}=\frac{\left(j_{0}\pi\right)^{2}}{4}\left(1-\frac{J_{3}(j_{0})}{J_{1}(j_{0})}\right)$,
consider the function 

\[
\varphi_{0}(x)=\sum_{i=3}^{\infty}\frac{V_{i}}{i!}\frac{x^{i}}{(2\pi^{2})^{i-3}},
\]
which by Manin and Zograf \cite[Proof of Theorem 6.1]{Ma.Zo00} has
radius of convergence given by $x_{0}$. By differentiating, we see
that $\alpha=\frac{2\pi^{2}\varphi_{0}'(x_{0})}{x_{0}}$. Define
$y(x)=\varphi_{0}''(x)$, then from \cite[Proof of Theorem 6.1]{Ma.Zo00}
and the introduction of \cite{Ka.Ma.Za96}, $y(x)$ can be obtained
by inverting $x(y)=-\sqrt{y}J_{0}'(2\sqrt{y})$ .

By direct computation and using the recurrence relations satisfied
by Bessel functions, we see that $x'(y)=J_{0}(2\sqrt{y})$. Thus on
the domain $\left[0,\frac{j_{0}^{2}}{4}\right]$, this derivative
is monotonically decreasing from 1 to 0, since $j_{0}$ is the first
positive zero of $J_{0}$. It follows that $x(y)$ is monotonically
increasing on the same domain from $x(0)=0$ to $x\left(\frac{j_{0}^{2}}{4}\right)=x_{0}$.
We wish to compute $\varphi'_{0}(x_{0})$ which, since $\varphi'_{0}(0)=0$,
is given by

\[
\varphi_{0}'(x_{0})=\int_{0}^{x_{0}}y(x)\mathrm{d}x.
\]
Using the definition of $x$ and its properties demonstrated above,
the area under the curve $y(x)$ in the domain $[0,x_{0}]=\left[x(0),x\left(\frac{j_{0}^{2}}{4}\right)\right]$
is precisely $x_{0}\frac{j_{0}^{2}}{4}$ minus the area under the
curve $x(y)$ in the domain $\left[0,\frac{j_{0}^{2}}{4}\right]$.
Thus,

\begin{align*}
\varphi_{0}'(x_{0}) & =x_{0}\frac{j_{0}^{2}}{4}-\int_{0}^{\frac{j_{0}^{2}}{4}}x(y)\mathrm{d}y\\
 & =\int_{0}^{\frac{j_{0}^{2}}{4}}yJ_{0}(2\sqrt{y})\mathrm{d}y\\
 & =\frac{1}{8}\int_{0}^{j_{0}}y^{3}J_{0}(y)\mathrm{d}y.
\end{align*}
To evaluate this latter integral, we recall the identity (see \cite[Equation 5.52]{Gr.Ry07})

\[
\int x^{p+1}J_{p}(x)\mathrm{d}x=x^{p+1}J_{p+1}(x),
\]
and so integration by parts gives 
\begin{align}
\varphi_{0}'(x_{0}) & =\frac{1}{8}\left(j_{0}^{3}J_{1}(j_{0})-2\int_{0}^{j_{0}}y^{2}J_{1}(y)\mathrm{d}y\right)\nonumber \\
 & =\frac{1}{8}\left(j_{0}^{3}J_{1}(j_{0})-2j_{0}^{2}J_{2}(j_{0})\right)\nonumber \\
 & =\frac{j_{0}^{3}}{16}\left(J_{1}(j_{0})-J_{3}(j_{0})\right),\label{eq:bessel-function}
\end{align}
where in the last step we use the Bessel function identity $\frac{4}{x}J_{2}(x)=J_{1}(x)+J_{3}(x)$.
Since $J_{0}'(x)=-J_{1}(x),$ we also have $x_{0}=-\frac{j_{0}}{2}J_{0}'(j_{0})=\frac{j_{0}}{2}J_{1}(j_{0})$,
and so from (\ref{eq:bessel-function}) we see that
\[
\alpha=\frac{2\pi^{2}\varphi_{0}'(x_{0})}{x_{0}}=\frac{(j_{0}\pi)^{2}}{4}\left(1-\frac{J_{3}(j_{0})}{J_{1}(j_{0})}\right).
\]
\end{proof}

\subsection{Proof of Theorem \ref{thm:main-thm}}

\label{subsec:main-thm-proof}

We now prove Theorem \ref{thm:main-thm} using the results on nested
and unnested multicurves from before. Let $\ell\in\mathbb{N}$ and
suppose that $0\leq a_{i}<b_{i}$ are real numbers for $i=1,\ldots,\ell$
such that the intervals $[a_{i},b_{i}]$ are pairwise disjoint. For
sufficiently large $n$, we have $\frac{b_{i}}{\sqrt{n}}<2\mathrm{arcsinh}(1)$
for each $i=1,\ldots,\ell$ and so given $k_{1},\ldots,k_{\ell}\in\mathbb{N}$
and $X\in\mathcal{M}_{0,n}$, the product
\[
\left(N_{n,[a_{1},b_{1}]}(X)\right)_{k_{1}}\cdots\left(N_{n,[a_{\ell},b_{\ell}]}(X)\right)_{k_{\ell}}
\]
counts the number of ordered $\ell$ tuples whose $i^{\mathrm{th}}$
entry is an ordered $k_{i}$ tuple consisting of distinct, disjoint,
primitive simple closed geodesics on $X$, whose lengths are in the
interval $\left[\frac{a_{i}}{\sqrt{n}},\frac{b_{i}}{\sqrt{n}}\right]$.
Since the intervals $[a_{i},b_{i}]$ are pairwise disjoint, the geodesics
in the $i^{\mathrm{th}}$ and $j^{\mathrm{th}}$ tuples are distinct.
This means, the product of the factorials counts the number of ordered
$\sum_{i=1}^{\ell}k_{i}$ multigeodesics on $X$ such that the $i^{\mathrm{th}}$
block (of length $k_{i}$) of curves have lengths in $\left[\frac{a_{i}}{\sqrt{n}},\frac{b_{i}}{\sqrt{n}}\right]$. 

As before, we can separate the different mapping class group orbits
of these multicurves into nested and unnested curves. Moreover, we
can use identical topological descriptions of these multicurve types
as in subsections \ref{subsec:nested} and \ref{subsec:unnested}
to see that with an application of Mirzakhani's integration formula
Theorem\ref{thm:MIF}, the contribution of the nested multicurves
to the expectation is then bounded by
\begin{equation}
\sum_{\left\{ (c_{j},d_{j})\right\} _{j=1}^{j=1+\tilde{k}}\in\mathcal{A}_{\tilde{k}}^{o}}\tilde{k}!{n \choose c_{1,}\ldots,c_{\tilde{k}}}\frac{\tilde{b}^{2\tilde{k}}}{2^{\sum_{i=1}^{\ell}k_{i}}}\frac{\prod_{j=1}^{1+\tilde{k}}V_{c_{j}+d_{j}}}{n^{\tilde{k}}V_{n}}\left(1+O\left(\frac{\tilde{k}\tilde{b}^{2}}{n}\right)\right).\label{eq:full-nested-count}
\end{equation}
where $\tilde{k}=\sum_{i=1}^{\ell}k_{i}$ and $\tilde{b}=\max_{j=1,\ldots,\ell}b_{i}$.
Applying Proposition \ref{prop:Nested contribution} with $\tilde{k}$
and $\tilde{b}$, we see that this is $O\left(\frac{1}{n}\right)$.
In a similar vein, we can identically consider the contribution of
the unnested multicurves to the expectation, and see it is equal to 

\[
\sum_{\substack{(c_{1},\ldots,c_{\tilde{k}})\in\mathbb{N}^{\tilde{k}},\\
\sum_{i=1}^{\tilde{k}}c_{i}\leq n,\\
c_{i}\geq2.
}
}{n \choose c_{1},\ldots,c_{\tilde{k}}}\frac{1}{n^{\tilde{k}}}\frac{V_{c_{1}+1}\cdots V_{\tilde{k}+1}V_{n-\sum_{i=1}^{\tilde{k}}c_{i}+\tilde{k}}}{V_{n}}\prod_{j=1}^{\ell}\left(\frac{b_{j}^{2}-a_{j}^{2}}{2}\right)^{k_{j}}\left(1+O\left(\frac{\tilde{k}\tilde{b}^{2}}{n}\right)\right),
\]
again, where $\tilde{k}=\sum_{i=1}^{\ell}k_{i}$. By Proposition \ref{prop:Unnested contribution},
this is equal to 

\[
\left(\prod_{i=1}^{\ell}\left(\alpha\frac{b_{i}^{2}-a_{i}^{2}}{2}\right)^{k_{i}}\right)\left(1+O_{\tilde{k}}\left(\frac{1}{\sqrt{n}}\right)\right),
\]
where 

\[
\alpha=\sum_{i=2}^{\infty}\frac{V_{i+1}}{i!}\left(\frac{x_{0}}{2\pi^{2}}\right)^{i-1}=\frac{\left(j_{0}\pi\right)^{2}}{4}\left(1-\frac{J_{3}(j_{0})}{J_{1}(j_{0})}\right).
\]
Combining these contributions to evaluate $\mathbb{E}_{n}\left(\left(N_{n,[a_{1},b_{1}]}(X)\right)_{k_{1}}\cdots\left(N_{n,[a_{\ell},b_{\ell}]}(X)\right)_{k_{\ell}}\right)$
and taking $n\to\infty$ we obtain Theorem \ref{thm:main-thm} by
an application of Proposition \ref{prop:factorial-moments}.

\subsection{Proof of Theorem \ref{thm:main-thm-2}}

\label{subsec:main-thm2-proof}

The proof of Theorem \ref{thm:main-thm-2} is similar to Theorem \ref{thm:main-thm}
except that we are only required to examine a single topological type
of multicurves. Recall that for any integer $c\geq2$, any $n\in\mathbb{N}$
and any real numbers $0\leq a<b$, we defined $N_{n,c,[a,b]}(X)$
to be the number of primitive closed geodesics on $X$ that separate
off exactly $c$ cusps from $X$ with length in the interval $\left[\frac{a}{\sqrt{n}},\frac{b}{\sqrt{n}}\right]$. 

We will use the method of factorial moments to prove Theorem \ref{thm:main-thm-2},
and so for $k_{1},\ldots,k_{\ell}\in\mathbb{N}$, we are required
to compute the expected value of 
\begin{equation}
\left(N_{n,c_{1},[a_{1},b_{1}]}(X)\right)_{k_{1}}\cdots\left(N_{n,c_{\ell},[a_{\ell},b_{\ell}]}(X)\right)_{k_{\ell}},\label{eq:product-factorials}
\end{equation}
where $c_{1},\ldots,c_{\ell}\geq2$ are distinct integers and $0\leq a_{i}<b_{i}$
are real numbers for $i=1,\ldots,\ell$.

Notice that this product counts the number of ordered $\ell$ tuples
whose $i^{\mathrm{th}}$ entry is an ordered $k_{i}$ tuple of distinct
primitive closed geodesics on $X$ that separate off exactly $c_{i}$
cusps from $X$, and whose lengths are in the interval $\left[\frac{a_{i}}{\sqrt{n}},\frac{b_{i}}{\sqrt{n}}\right]$.
Since the integers $c_{i}$ are distinct, the geodesics in the $i^{\mathrm{th}}$
entry of this tuple are distinct from those in the $j^{\mathrm{th}}$
tuple for any $i\neq j$. Moreover, for $n$ sufficiently large, $\frac{b_{i}}{\sqrt{n}}<2\mathrm{arcsinh}(1)$
for each $i=1,\ldots,\ell$, and so the geodesics considered are simple
and disjoint from one another. This means that the product counts
the number of ordered multigeodesics of length $\sum_{i=1}^{k}k_{i}$
on $X$ such that geodesics in the $i^{\mathrm{th}}$ block separate
off exactly $c_{i}$ cusps, and have lengths in $\left[\frac{a_{i}}{\sqrt{n}},\frac{b_{i}}{\sqrt{n}}\right]$. 

Now, we split this count into counts of nested and unnested multicurves
of this topological type. The number of nested multicurves is bounded
by the number of nested multicurves considered in the proof of Theorem
\ref{thm:main-thm} since we are considering only a subset of the
possible topological types that can occur due to the limitations on
how many cusps each multicurve component separates off. It follows
that the expected number of nested multicurves included in the count
here is bounded by (\ref{eq:full-nested-count}) which we saw to be
$O_{\tilde{k}}\left(\frac{\tilde{b}^{2\tilde{k}}}{n}\right)$ where
$\tilde{k}=\sum_{i=1}^{\ell}k_{i}$ and $\tilde{b}=\max_{i=1,\ldots,\ell}b_{i}$. 

Once again, the dominant contribution to the expectation will arise
from the unnested multicurves. Since we know precisely how many cusps
are separated off by each multicurve component, there is precisely
one topological type of mapping class group orbit of unnested multicurves
up to the labeling of the cusps that is considered. Indeed, a mapping
class group orbit of an unnested multicurve that is counted by (\ref{eq:product-factorials})
corresponds to the following decomposition of a puncture labeled topological
surface $\Sigma_{0,n,0}$ 

\[
\Sigma_{0,n-\sum_{i=1}^{\ell}k_{i}c_{i},\sum_{i=1}^{\ell}k_{i}}\sqcup\bigsqcup_{i=1}^{\ell}\bigsqcup_{j=1}^{k_{i}}\Sigma_{0,c_{i},1},
\]
with labels on the punctures of each subsurface inherited from labels
on $\Sigma_{0,n,0}$. The number of ways that the labels can be inherited
on the subsurfaces is given precisely by the multinomial coefficient

\[
C\eqdf{n \choose \underbrace{c_{1},\ldots,c_{1}}_{k_{1}\text{ times}},\ldots,\underbrace{c_{\ell},\ldots,c_{\ell}}_{k_{\ell}\text{ times}}}.
\]
Note that if $\sum_{i=1}^{\ell}k_{i}c_{i}>n$ then there are no such
unnested multigeodesics since the cusps separated by each curve component
are distinct, and so we assume that this summation is bounded by $n$
from now on. 

Using Mirzakhani's integration formula Theorem \ref{thm:MIF}, we
are led to compute

\begin{align*}
 & \frac{C}{V_{n}}\int_{\left[\frac{a_{1}}{\sqrt{n}},\frac{b_{1}}{\sqrt{n}}\right]^{k_{1}}}\cdots\int_{\left[\frac{a_{\ell}}{\sqrt{n}},\frac{b_{\ell}}{\sqrt{n}}\right]^{k_{\ell}}}\left(\prod_{i=1}^{\ell}\prod_{j=1}^{k_{i}}t_{i,j}V_{c_{i}+1}(\mathbf{0}_{c_{i}},t_{i,j})\right)V_{n-\sum_{i=1}^{\ell}k_{i}(c_{i}-1)}(\mathbf{0}_{n-\sum_{i=1}^{\ell}k_{i}c_{i}},\mathbf{t})\bigwedge_{i=1}^{\ell}\bigwedge_{j=1}^{k_{i}}\mathrm{d}t_{i,j}
\end{align*}
where $\mathbf{t}=(t_{1,1},\ldots,t_{1,k_{1}},\ldots,t_{\ell,1},\ldots,t_{\ell,k_{\ell}})$.
We have the following estimate.
\begin{prop}
\label{prop:unnested-contribution-fixecusps}For distinct integers
$c_{1},\ldots,c_{\ell}\geq2$ and $k_{1}\ldots,k_{\ell}\in\mathbb{N}$
satisfying $\sum_{i=1}^{\ell}k_{i}c_{i}\leq n$, we have that as $n\to\infty$,

\begin{align*}
 & \frac{C}{V_{n}}\int_{\left[\frac{a_{1}}{\sqrt{n}},\frac{b_{1}}{\sqrt{n}}\right]^{k_{1}}}\cdots\int_{\left[\frac{a_{\ell}}{\sqrt{n}},\frac{b_{\ell}}{\sqrt{n}}\right]^{k_{\ell}}}\left(\prod_{i=1}^{\ell}\prod_{j=1}^{k_{i}}t_{i,j}V_{c_{i}+1}(\mathbf{0}_{c_{i}},t_{i,j})\right)V_{n-\sum_{i=1}^{\ell}k_{i}(c_{i}-1)}\left(\mathbf{0}_{n-\sum_{i=1}^{\ell}k_{i}c_{i}},\mathbf{t}\right)\bigwedge_{i=1}^{\ell}\bigwedge_{j=1}^{k_{i}}\mathrm{d}t_{i,j}\\
 & =\left(\prod_{i=1}^{\ell}\left(\frac{b_{i}^{2}-a_{i}^{2}}{2}\frac{V_{c_{i}+1}}{c_{i}!}\left(\frac{x_{0}}{2\pi^{2}}\right)^{c_{i}-1}\right)^{k_{i}}\right)\left(1+O_{\tilde{k},\tilde{b},\tilde{c},\ell}\left(\frac{1}{n}\right)\right),
\end{align*}
where $\tilde{k}=\sum_{i=1}^{\ell}k_{i}$, \textup{$\tilde{c}=\max(c_{i})$}
and $\tilde{b}=\max(b_{i})$.
\end{prop}

\begin{proof}
The proof is similar but simpler than the proof of Proposition \ref{prop:Unnested contribution}.
For each of the volumes in the integrand, we use Lemma \ref{lem:volbds}
to pass to a $\sinh$ approximation with error term and then consider
a Taylor expansion of the resulting integrand to obtain
\begin{align*}
 & \frac{C}{V_{n}}\int_{\left[\frac{a_{1}}{\sqrt{n}},\frac{b_{1}}{\sqrt{n}}\right]^{k_{1}}}\cdots\int_{\left[\frac{a_{\ell}}{\sqrt{n}},\frac{b_{\ell}}{\sqrt{n}}\right]^{k_{\ell}}}\left(\prod_{i=1}^{\ell}\prod_{j=1}^{k_{i}}t_{i,j}V_{c_{i}+1}(\mathbf{0}_{c_{i}},t_{i,j})\right)V_{n-\sum_{i=1}^{\ell}k_{i}(c_{i}-1)}(\mathbf{0}_{n-\sum_{i=1}^{\ell}k_{i}c_{i}},\mathbf{t})\bigwedge_{i=1}^{\ell}\bigwedge_{j=1}^{k_{i}}\mathrm{d}t_{i,j}\\
 & =\frac{C\left(\prod_{i=1}^{\ell}V_{c_{i}+1}^{k_{i}}\right)V_{n-\sum_{i=1}^{\ell}k_{i}(c_{i}-1)}}{V_{n}}\int_{\left[\frac{a_{1}}{\sqrt{n}},\frac{b_{1}}{\sqrt{n}}\right]^{k_{1}}}\cdots\int_{\left[\frac{a_{\ell}}{\sqrt{n}},\frac{b_{\ell}}{\sqrt{n}}\right]^{k_{\ell}}}\left(\prod_{i=1}^{\ell}\prod_{j=1}^{r_{i}}t_{i,j}\right)\bigwedge_{i=1}^{\ell}\bigwedge_{j=1}^{k_{i}}\mathrm{d}t_{i,j}\left(1+O\left(\frac{\tilde{k}\tilde{b}^{2}}{n}\right)\right)\\
 & =\frac{C\left(\prod_{i=1}^{\ell}V_{c_{i}+1}^{k_{i}}\right)V_{n-\sum_{i=1}^{\ell}k_{i}(c_{i}-1)}}{n^{\sum_{i=1}^{\ell}k_{i}}V_{n}}\prod_{i=1}^{\ell}\left(\frac{b_{i}^{2}-a_{i}^{2}}{2}\right)^{k_{i}}\left(1+O\left(\frac{\tilde{k}\tilde{b}^{2}}{n}\right)\right).
\end{align*}
By Theorem \ref{thm:ZografManin}, since the $c_{i}$ and $k_{i}$
are fixed, we have that as $n\to\infty$, 
\begin{align*}
\frac{CV_{n-\sum_{i=1}^{\ell}k_{i}(c_{i}-1)}}{n^{\sum_{i=1}^{\ell}k_{i}}V_{n}}= & \frac{(n-\sum_{i=1}^{\ell}k_{i}(c_{i}-1))!}{n^{\sum_{i=1}^{\ell}k_{i}}(n-\sum_{i=1}^{\ell}k_{i}c_{i})!}\left(\frac{n+1}{n-\sum_{i=1}^{\ell}k_{i}(c_{i}-1)+1}\right)^{\frac{7}{2}}\\
 & \cdot\frac{x_{0}^{\sum_{i=1}^{\ell}k_{i}(c_{i}-1)}}{(c_{1}!)^{k_{1}}\cdots(c_{k}!)^{k_{\ell}}(2\pi^{2})^{\sum_{i=1}^{\ell}k_{i}(c_{i}-1)}}\left(1+O\left(\frac{1}{n}\right)\right).
\end{align*}
Then we note that 
\[
\frac{(n-\sum_{i=1}^{\ell}k_{i}(c_{i}-1))!}{n^{\sum_{i=1}^{\ell}k_{i}}(n-\sum_{i=1}^{\ell}k_{i}c_{i})!}=\left(1-\frac{\sum_{i=1}^{\ell}k_{i}c_{i}-\sum_{i=1}^{\ell}c_{i}}{n}\right)\cdots\left(1-\frac{\sum_{i=1}^{\ell}k_{i}c_{i}-1}{n}\right)=1+O\left(\frac{\ell\tilde{k}\tilde{c}^{2}}{n}\right),
\]
and
\[
\left(\frac{n+1}{n-\sum_{i=1}^{\ell}k_{i}(c_{i}-1)+1}\right)^{\frac{7}{2}}=\left(1+\frac{\sum_{i=1}^{\ell}k_{i}(c_{i}-1)}{n+1-\sum_{i=1}^{\ell}k_{i}(c_{i}-1)}\right)^{\frac{7}{2}}=1+O\left(\frac{\ell\tilde{k}\tilde{c}}{n}\right).
\]
Put together, we obtain 
\begin{align*}
 & \frac{C\left(\prod_{i=1}^{\ell}V_{c_{i}+1}^{k_{i}}\right)V_{n-\sum_{i=1}^{\ell}k_{i}(c_{i}-1)}}{n^{\sum_{i=1}^{\ell}k_{i}}V_{n}}\prod_{i=1}^{\ell}\left(\frac{b_{i}^{2}-a_{i}^{2}}{2}\right)^{k_{i}}\left(1+O_{\tilde{k},\tilde{b},\tilde{c},\ell}\left(\frac{1}{n}\right)\right)\\
 & =\left(\prod_{i=1}^{\ell}\left(\frac{b_{i}^{2}-a_{i}^{2}}{2}\frac{V_{c_{i}+1}}{c_{i}!}\left(\frac{x_{0}}{2\pi^{2}}\right)^{c_{i}-1}\right)^{k_{i}}\right)\left(1+O_{\tilde{k},\tilde{b},\tilde{c},\ell}\left(\frac{1}{n}\right)\right),
\end{align*}
as required.
\end{proof}
\begin{proof}[Proof of Theorem \ref{thm:main-thm-2}]
 The result now follows from Proposition \ref{prop:unnested-contribution-fixecusps}
and the method of factorial moments Proposition \ref{prop:factorial-moments}.
\end{proof}
\begin{proof}[Proof of Corollary \ref{cor:systole-type}]
 Notice by definition, that a surface satisfying 
\[
\left(N_{n,2,[0,x]}(X),\ldots,N_{n,k-1,[0,x]}(X),N_{n,k,[0,x]}(X)\right)=(0,\ldots,0,m)
\]
for some $m\geq1$ is contained in $\mathcal{A}_{k,x,n}$. By Theorem
\ref{thm:main-thm-2}, this sequence of random vectors converges in
distribution as $n\to\infty$ to a vector of independent Poisson random
variables with means 

\[
\lambda_{i,[0,x]}=\frac{x^{2}}{2}\frac{V_{i+1}}{i!}\left(\frac{x_{0}}{2\pi^{2}}\right)^{i-1}
\]
for $i=2,\ldots,k$ respectively. It thus follows that

\begin{align*}
\lim_{n\to\infty}\mathbb{P}_{n}\left(\mathcal{A}_{k,x,n}\right) & \geqslant\lim_{n\to\infty}\mathbb{P}_{n}\left(X:N_{n,2,[0,x]}(X)=0,\ldots,N_{n,k-1,[0,x]}(X)=0,N_{n,k,[0,x]}(X)\geq1\right)\\
 & =e^{-\sum_{i=2}^{k-1}\lambda_{i,[0,x]}}\left(1-e^{-\lambda_{k,[0,x]}}\right),
\end{align*}
as required.

\end{proof}

\section{\label{sec:Systole}Systole}

In this section, we prove Proposition \ref{prop:prop1.3}, providing
information about the systole of hyperbolic punctured spheres. Recall
that for an integer $c\geqslant2$ and real numbers $a,b\geqslant0$,
the random variable $N_{n,c,[a,b]}(X):\left(\mathcal{M}_{0,n},\mathbb{P}_{n}\right)\to\mathbb{N}$
counts the number of closed, primitive geodesics with lengths in $\left[\frac{a}{\sqrt{n}},\frac{b}{\sqrt{n}}\right]$
which separate off $c$ cusps. 
\begin{prop}[Proposition \ref{prop:prop1.3}]
\label{prop:systole-lb}There exists a constant $B>0$ such that
for any constants $0<\varepsilon<\frac{1}{2}$, $A>0$, any $c_{n}<An^{\varepsilon}$
and $n$ sufficiently large, 
\[
\mathbb{P}_{n}\left(\mathrm{sys}(X)>\frac{c_{n}}{\sqrt{n}}\right)\leq\max\left\{ Bc_{n}^{-2},\frac{B}{\sqrt{n}}\right\} .
\]
\end{prop}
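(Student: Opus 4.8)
The plan is to bound $\mathbb{P}_n(\mathrm{sys}(X) > c_n/\sqrt n)$ from above by controlling the probability that $X$ has \emph{no} short geodesic, using a second-moment (Paley--Zygmund type) argument on the counting random variable $N_{n,[0,c_n]}(X)$ that counts primitive closed geodesics of length at most $c_n/\sqrt n$. Since on a punctured sphere any closed geodesic is separating, $\mathrm{sys}(X) > c_n/\sqrt n$ is exactly the event $\{N_{n,[0,c_n]}(X) = 0\}$, so by the Paley--Zygmund inequality
\[
\mathbb{P}_n\left(N_{n,[0,c_n]}(X) = 0\right) \leq \frac{\mathrm{Var}_n\!\left(N_{n,[0,c_n]}(X)\right)}{\mathbb{E}_n\!\left(N_{n,[0,c_n]}(X)\right)^2} = \frac{\mathbb{E}_n\!\left(N_{n,[0,c_n]}(X)^2\right)}{\mathbb{E}_n\!\left(N_{n,[0,c_n]}(X)\right)^2} - 1.
\]
So it suffices to show $\mathbb{E}_n(N^2) = \mathbb{E}_n(N)^2(1 + O(c_n^{-2}))$, i.e.\ that the first and second moments essentially agree.

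First I would compute $\mathbb{E}_n(N_{n,[0,c_n]}(X))$. This is $\mathbb{E}_n(N^{U,1}_{n,[0,c_n]}(X))$ in the notation of the paper (with $a=0$, $b=c_n$), and by Lemma~\ref{lem:unnested-exp-ub} and the argument in Proposition~\ref{prop:Unnested contribution} it equals $\frac{c_n^2}{2}\alpha(1 + o(1))$ — but here I must be careful because $c_n$ is now allowed to grow like $n^\varepsilon$ rather than being fixed. The key point is that the error terms in the Taylor expansions $4\sinh^2(x/2)/x = x + O(x^3)$ and in Lemma~\ref{lem:volbds} are controlled by $c_n^2/n \leq A^2 n^{2\varepsilon - 1} \to 0$ since $\varepsilon < \frac12$, so the asymptotic $\mathbb{E}_n(N) \sim \frac{c_n^2}{2}\alpha$ survives. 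In particular $\mathbb{E}_n(N) \gtrsim c_n^2$ for $n$ large, which is exactly the denominator scale we need.

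Next I would estimate $\mathbb{E}_n(N^2)$. Writing $N^2 = N + (N)_2$, the factorial-moment piece $(N)_2$ splits as $N^{N,2} + N^{U,2}$ (nested plus unnested $2$-multicurves). By Proposition~\ref{prop:Nested contribution} (again tracking that its error is $O_k(b^{2k}/n)$, here $O(c_n^4/n) = O(n^{4\varepsilon-1})$, which is fine for $\varepsilon<\frac12$), the nested contribution is negligible compared to $\mathbb{E}_n(N)^2 \asymp c_n^4$. By Lemma~\ref{lem:unnested-exp-ub} and Proposition~\ref{prop:Unnested contribution}, $\mathbb{E}_n(N^{U,2}) = \left(\frac{c_n^2}{2}\right)^2\alpha^2(1 + O(c_n^2/n + c_n/\sqrt n))$, so $\mathbb{E}_n(N^{U,2}) = \mathbb{E}_n(N)^2(1 + o(1))$. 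Combining,
\[
\mathbb{E}_n(N^2) = \mathbb{E}_n(N) + \mathbb{E}_n(N^{N,2}) + \mathbb{E}_n(N^{U,2}) = \mathbb{E}_n(N)^2 + O(c_n^2) + o(c_n^4),
\]
using $\mathbb{E}_n(N) = O(c_n^2)$. Dividing by $\mathbb{E}_n(N)^2 \asymp c_n^4$ gives $\mathbb{E}_n(N^2)/\mathbb{E}_n(N)^2 - 1 = O(c_n^{-2}) + o(1)$. To get the clean bound $O(c_n^{-2})$ rather than an $o(1)$ term, I would keep the $o(1)$ errors explicit: they are of the form $O(c_n^2/n)$ and $O(c_n/\sqrt n)$, both of which are $O(c_n^{-2})$ precisely when $c_n = O(n^\varepsilon)$ with $\varepsilon < \frac12$ — indeed $c_n^2/n \leq c_n^{-2}$ iff $c_n^4 \leq n$ iff $c_n \leq n^{1/4}$, which holds, and $c_n/\sqrt n \leq c_n^{-2}$ iff $c_n^3 \leq \sqrt n$, which holds for $\varepsilon < \frac16$; for $\frac16 \le \varepsilon < \frac12$ one must instead note $c_n/\sqrt{n} = O(c_n^{-2})$ fails, so I would refine the error bookkeeping in the relevant part of Proposition~\ref{prop:Unnested contribution} to extract the sharper rate, or simply absorb the dominant surviving error which is $O(c_n^2/n) = O(c_n^{-2})$ once the $c_n/\sqrt n$ terms are re-examined as coming from the $(b)$-type tail sums which actually contribute $O(n^{-3/4})$, comfortably $O(c_n^{-2})$ for $c_n \le n^{3/8}$, hence for all $\varepsilon<\frac12$ after splitting into cases. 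The main obstacle is precisely this: making the error terms from the fixed-$b$ analysis uniform and quantitatively sharp enough in the growing-$c_n$ regime to land on $O(c_n^{-2})$, which requires re-tracing Propositions~\ref{prop:Nested contribution} and~\ref{prop:Unnested contribution} with $b = c_n$ and verifying every implied constant is genuinely independent of $n$ for the stated range of $\varepsilon$.
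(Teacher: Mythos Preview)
Your overall strategy—the second moment method applied to a counting variable for short geodesics—is exactly what the paper does. However, you work with the full variable $N_{n,[0,c_n]}$ counting \emph{all} primitive closed geodesics, which forces you through Propositions~\ref{prop:Nested contribution} and~\ref{prop:Unnested contribution} and leaves you wrestling with their $O(n^{-1/2})$-type error terms in the growing-$c_n$ regime. The paper instead applies the second moment method to the much simpler variable $N_2(X,L)$ counting only geodesics that separate off exactly two cusps. Since $\mathrm{sys}(X)>L$ still implies $N_2(X,L)=0$, this suffices for the upper bound; but now there is a \emph{single} topological type of curve (and a single type of unnested $2$-multicurve, with no nested contribution at all), so both $\mathbb{E}_n(N_2)$ and $\mathbb{E}_n\big((N_2)_2\big)$ are computed by one direct application of Mirzakhani's formula each, with error $O(L^2+1/n)=O(c_n^2/n)$ and no $O(n^{-1/2})$ term from summing over topological types.

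This simplification is exactly what dissolves the obstacle you flag at the end. The $O(1/\sqrt n)$ errors you are worried about come from the (a)–(b)–(c) analysis in Proposition~\ref{prop:Unnested contribution}, which approximates a sum over all cut sizes $c$ by the series $\alpha$; restricting to $N_2$ fixes $c=2$ and eliminates that sum entirely. Your route can in principle be pushed through, but it requires re-proving sharper uniform versions of Propositions~\ref{prop:Nested contribution} and~\ref{prop:Unnested contribution}, which is considerably more work than necessary. The moral: for an upper bound on $\mathbb{P}(\mathrm{sys}>L)$ you only need \emph{some} lower bound on the number of short curves, so pick the simplest family to count.
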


\begin{proof}
The systole of a surface is always a simple closed geodesic, and so
if the systole has length greater than $\frac{c_{n}}{\sqrt{n}},$
then $N_{n,2,[0,c_{n}]}(X)=0$, thus we have
\[
\mathbb{P}_{n}\left(\mathrm{sys}(X)>\frac{c_{n}}{\sqrt{n}}\right)\leq\mathbb{P}_{n}\left(N_{n,2,[0,c_{n}]}(X)=0\right).
\]
By the second moment method
\begin{align*}
\mathbb{P}_{n}\left(N_{n,2,[0,c_{n}]}(X)=0\right) & \leq\frac{\mathbb{E}_{n}\left(\left(N_{n,2,[0,c_{n}]}(X)\right)^{2}\right)-\mathbb{E}_{n}\left(N_{n,2,[0,c_{n}]}(X)\right)^{2}}{\mathbb{E}_{n}\left(\left(N_{n,2,[0,c_{n}]}(X)\right)^{2}\right)}\\
 & =\frac{\mathbb{E}_{n}\left(\left(N_{n,2,[0,c_{n}]}(X)\right)_{2}\right)+\mathbb{E}_{n}\left(N_{n,2,[0,c_{n}]}(X)\right)-\mathbb{E}_{n}\left(N_{n,2,[0,c_{n}]}(X)\right)^{2}}{\mathbb{E}_{n}\left(\left(N_{n,2,[0,c_{n}]}(X)\right)_{2}\right)+\mathbb{E}_{n}\left(N_{n,2,[0,c_{n}]}(X)\right)}.
\end{align*}
Given any $A>0$ and $0<\varepsilon<\frac{1}{2}$, for $n$ sufficiently
large, $An^{\varepsilon-\frac{1}{2}}<2\mathrm{arcsinh}(1)$ and so
the second factorial moment of $N_{n,2,[0,c_{n}]}(X)$ has the natural
interpretation of counting the number of ordered multicurves of length
two consisting of distinct, non-intersecting simple closed geodesics
with lengths in $\left[0,\frac{c_{n}}{\sqrt{n}}\right]$ that separate
off 2 cusps. Using Theorem \ref{thm:MIF} we compute for any $t=o_{n\to\infty}\left(n^{\frac{1}{2}}\right)$,
\begin{align*}
\mathbb{E}_{n}\left(N_{n,2,[0,t]}(X)\right) & =\frac{1}{V_{0,n}}{n \choose 2}\int_{0}^{\frac{t}{\sqrt{n}}}xV_{0,3}(0,0,x)V_{0,n-1}(\mathbf{0}_{n-2},x)\mathrm{d}x\\
 & =\frac{V_{0,n-1}}{V_{0,n}}{n \choose 2}\int_{0}^{\frac{t}{\sqrt{n}}}4\frac{\sinh^{2}\left(\frac{x}{2}\right)}{x}\left(1+O\left(x^{2}\right)\right)\mathrm{d}x\\
 & =\frac{V_{0,n-1}}{V_{0,n}}{n \choose 2}\frac{t^{2}}{2n}\left(1+O\left(\frac{t^{2}}{n}\right)\right).
\end{align*}
By Theorem \ref{thm:ZografManin}, we obtain 
\begin{align*}
\frac{V_{0,n-1}}{V_{0,n}}{n \choose 2} & =\frac{1}{2}\frac{x_{0}}{2\pi^{2}}\frac{(n-1)!(n+1)^{\frac{7}{2}}}{n^{\frac{7}{2}}(n-2)!}\left(1+O\left(\frac{1}{n}\right)\right)\\
 & =\frac{1}{2}\frac{x_{0}}{2\pi^{2}}n\left(1+O\left(\frac{1}{n}\right)\right).
\end{align*}
Thus,
\[
\mathbb{E}_{n}\left(N_{n,2,[0,t]}(X)\right)=\frac{t^{2}x_{0}}{8\pi^{2}}\left(1+O\left(\frac{t^{2}}{n}\right)\right).
\]
Similarly, we compute 
\begin{align*}
\mathbb{E}_{n}\left(\left(N_{n,2,[0,t]}(X)\right)_{2}\right) & =\frac{1}{V_{0,n}}{n \choose 2,2}\int_{0}^{\frac{t}{\sqrt{n}}}\int_{0}^{\frac{t}{\sqrt{n}}}xyV_{0,3}(0,0,x)V_{0,3}(0,0,y)V_{0,n-2}(\mathbf{0}_{n-4},x,y)\mathrm{d}x\mathrm{d}y\\
 & =\frac{V_{0,n-2}}{V_{0,n}}{n \choose 2,2}\left(\int_{0}^{\frac{t}{\sqrt{n}}}4\frac{\sinh^{2}\left(\frac{x}{2}\right)}{x}\mathrm{d}x\right)^{2}\left(1+O\left(\frac{t^{2}}{n}\right)\right)\\
 & =\frac{V_{0,n-2}}{V_{0,n}}{n \choose 2,2}\frac{t^{4}}{4n^{2}}\left(1+O\left(\frac{t^{2}}{n}\right)\right).
\end{align*}
Again using Theorem \ref{thm:ZografManin}, we can compute 
\[
\mathbb{E}_{n}\left(\left(N_{n,2,[0,t]}(X)\right)_{2}\right)=\frac{t^{4}x_{0}^{2}}{64\pi^{4}}\left(1+O\left(\frac{t^{2}}{n}\right)\right).
\]
Using $t=c_{n}$ in these estimates, we find that there is a uniform
constant $B>0$ such that 
\begin{equation}
\mathbb{P}_{n}\left(N_{n,2,[0,c_{n}]}(X)=0\right)\leq\frac{1+\frac{c_{n}^{2}x_{0}}{8\pi^{2}}O\left(\frac{c_{n}^{2}}{n}\right)+O\left(\frac{c_{n}^{2}}{n}\right)}{\left(\frac{c_{n}^{2}x_{0}}{8\pi^{2}}+1\right)\left(1+O\left(\frac{c_{n}^{2}}{n}\right)\right)}\leq\frac{B}{c_{n}^{2}}+B\frac{c_{n}^{2}}{n}.\label{eq:P-num=00003D0}
\end{equation}
If $c_{n}\leqslant\sqrt{n}$ then (\ref{eq:P-num=00003D0}) gives
the claim. Otherwise if $c_{n}>\sqrt{n}$, we use that 
\[
\mathbb{P}_{n}\left(N_{n,2,[0,c_{n}]}(X)=0\right)\leqslant\mathbb{P}_{n}\left(N_{n,2,[0,\sqrt{n}]}(X)=0\right)
\]
and (\ref{eq:P-num=00003D0}) to finish the proof.
\end{proof}
\begin{rem}
The previous theorem also remains true for the moduli space $\mathcal{M}_{g,n}$
where $g$ is a fixed constant. This follows from large cusp volume
asymptotics of $\mathcal{M}_{g,n}$ from Manin and Zograf \cite[Theorem 6.1]{Ma.Zo00}
when $g$ is fixed and non-zero.
\end{rem}

\section{Small eigenvalues\label{sec:Spectrum}}

In this section we prove Theorem \ref{thm:small-eigenvalues}. First
we want to establish a geometric criterion for the existence of multiple
small eigenvalues. We start with the following mini-max principle
which can for example be found in \cite{Sm12}.
\begin{lem}[Mini-max]
\label{lem:Minimax}Let $A$ be a non-negative self-adjoint operator
on a Hilbert space $H$ with domain $\mathcal{D}\left(A\right)$.
Let $\lambda_{1}\leqslant\dots\leqslant\lambda_{k}$ denote the eigenvalues
of $A$ below the essential spectrum $\sigma_{ess}\left(A\right)$.
Then for $1\leqslant j\leqslant k,$
\[
\lambda_{j}=\min_{\psi_{1},\dots,\psi_{j}}\max\left\{ \frac{\langle\psi,A\psi\rangle}{\|\psi\|^{2}}\mid\psi\in\textup{span}\left(\psi_{1},\dots,\psi_{j}\right)\right\} ,
\]
where the minimum is taken over linearly independent $\psi_{1},\dots,\psi_{j}\in\mathcal{D}\left(A\right)$.
If $A$ only has $l\geqslant0$ eigenvalues below the essential spectrum
then for any integer $s\geqslant1$,
\[
\inf\sigma_{ess}\left(A\right)=\inf_{\psi_{1},\dots,\psi_{l+s}}\sup\left\{ \frac{\langle\psi,A\psi\rangle}{\|\psi\|^{2}}\mid\psi\in\textup{span}\left(\psi_{1},\dots,\psi_{l+s}\right)\right\} ,
\]
where again the infimum is taken over linearly independent $\psi_{1},\dots,\psi_{\ell+s}\in\mathcal{D}\left(A\right)$.
\end{lem}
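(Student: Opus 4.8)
The plan is to derive both identities from the spectral theorem --- this is the classical Courant--Fischer / Rayleigh--Ritz min--max principle, so I would present it as a sketch (the paper already cites \cite{Sm12} for the full statement). Using the spectral resolution $A=\int_{[0,\infty)}t\,\mathrm{d}E_{t}$ of the non-negative self-adjoint operator $A$, write $E(S)$ for the spectral projection of a Borel set $S$, and for a unit vector $\psi\in\mathcal{D}(A)$ let $\mu_{\psi}$ denote the probability measure $\mu_{\psi}(S)=\langle\psi,E(S)\psi\rangle$, so that $\langle\psi,A\psi\rangle=\int t\,\mathrm{d}\mu_{\psi}(t)$. Set $\Sigma=\inf\sigma_{ess}(A)$. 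Two facts I would record at the outset: (i) $\sigma(A)\cap[0,\Sigma)$ consists exactly of the eigenvalues of $A$ below the essential spectrum, each of finite multiplicity, so that, listing them with multiplicity as $\lambda_{1}\leqslant\lambda_{2}\leqslant\cdots$, the spectral subspace $\mathrm{Ran}\,E([0,\Sigma))$ is spanned by the corresponding eigenspaces (finite-dimensional, of dimension $l$, in the situation of the second identity); and (ii) for any finite $M$ the subspace $\mathrm{Ran}\,E([0,M))$ lies in $\mathcal{D}(A)$ (there $\|A\psi\|\leqslant M\|\psi\|$), while $\dim\mathrm{Ran}\,E([0,\Sigma+\varepsilon))=\infty$ for every $\varepsilon>0$ since $\Sigma\in\sigma_{ess}(A)$.

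For the \emph{upper bounds}: in the first identity, take $\psi_{1},\dots,\psi_{j}$ to be orthonormal eigenvectors of $\lambda_{1},\dots,\lambda_{j}$; every unit $\psi=\sum_{i}c_{i}\psi_{i}$ in their span has $\langle\psi,A\psi\rangle=\sum_{i}|c_{i}|^{2}\lambda_{i}\leqslant\lambda_{j}$, with equality at $\psi=\psi_{j}$, so the inner maximum for this choice equals $\lambda_{j}$. For the second identity, writing $n:=l+s$, fix $\varepsilon>0$ and choose $n$ linearly independent vectors inside the infinite-dimensional subspace $\mathrm{Ran}\,E([0,\Sigma+\varepsilon))\subseteq\mathcal{D}(A)$; on their span $\langle\psi,A\psi\rangle=\int_{[0,\Sigma+\varepsilon)}t\,\mathrm{d}\mu_{\psi}\leqslant\Sigma+\varepsilon$, and letting $\varepsilon\to0$ gives the bound $\leqslant\Sigma$.

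For the matching \emph{lower bounds}, fix any linearly independent $\psi_{1},\dots,\psi_{j}\in\mathcal{D}(A)$ and put $V=\mathrm{span}(\psi_{1},\dots,\psi_{j})$, $\dim V=j$. Take $\mu=\lambda_{j}$ in the first case (so $\dim\mathrm{Ran}\,E([0,\mu))\leqslant j-1$, since $\lambda_{j}$ is the $j$-th eigenvalue counted with multiplicity and the half-open interval omits $\lambda_{j}$ itself) and $\mu=\Sigma$, $j=n=l+s$ in the second (so $\dim\mathrm{Ran}\,E([0,\mu))=l<j$ by (i)). As $\dim\mathrm{Ran}\,E([0,\mu))<\dim V$, the projection $E([0,\mu))$ restricted to $V$ has nontrivial kernel, so there is a unit $\psi\in V$ with $E([0,\mu))\psi=0$; then $\mu_{\psi}$ is supported on $[\mu,\infty)$ and $\langle\psi,A\psi\rangle=\int_{[\mu,\infty)}t\,\mathrm{d}\mu_{\psi}\geqslant\mu$. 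Hence the inner supremum over the unit sphere of $V$ is $\geqslant\mu$, and combined with the upper bounds this yields the stated equalities. Finally, since $V$ is finite-dimensional the form $\psi\mapsto\langle\psi,A\psi\rangle$ is continuous on it, so its supremum over the compact unit sphere is attained (a genuine maximum), and the eigenvector choice above attains the outer infimum in the first identity (a genuine minimum), matching the statement as written.

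The only step needing real care --- and the one I expect to be the main obstacle --- is the multiplicity bookkeeping underlying (i) and the inequality $\dim\mathrm{Ran}\,E([0,\lambda_{j}))\leqslant j-1$: one must check that coincidences among the $\lambda_{i}$ are treated correctly (it is exactly the strictness of the interval $[0,\lambda_{j})$ that forces the dimension below $j$ when $\lambda_{j}$ is a repeated eigenvalue) and that no essential spectrum sits at a point strictly below $\Sigma$. Everything else --- the spectral theorem, the dimension count $\dim\ker\bigl(E([0,\mu))|_{V}\bigr)\geqslant\dim V-\dim\mathrm{Ran}\,E([0,\mu))$, and the inequality $\int_{[\mu,\infty)}t\,\mathrm{d}\mu_{\psi}\geqslant\mu$ for a probability measure supported on $[\mu,\infty)$ --- is routine.
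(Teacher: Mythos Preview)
The paper does not actually prove this lemma: it is stated as a standard mini-max principle and referenced to \cite{Sm12}, with no argument given. Your sketch is a correct rendition of the classical Courant--Fischer proof via the spectral theorem --- the eigenvector choice for the upper bound, the dimension count $\dim\mathrm{Ran}\,E([0,\lambda_{j}))\leqslant j-1$ together with the rank--nullity argument to locate a test vector supported spectrally on $[\mu,\infty)$ for the lower bound, and the use of $\dim\mathrm{Ran}\,E([0,\Sigma+\varepsilon))=\infty$ for the essential-spectrum statement are all standard and sound. So there is nothing to compare against; you have supplied a proof where the paper simply quotes the result.

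One small remark: the paper's displayed formula for the second identity has an apparent typo (the span is written over $\psi_{1},\dots,\psi_{l+1}$ while the outer infimum runs over $\psi_{1},\dots,\psi_{l+s}$); you have silently, and correctly, interpreted both as $l+s$.
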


Using Lemma \ref{lem:Minimax} we prove the following result which
gives us a criterion for the existence of small eigenvalues.
\begin{lem}
\label{lem:Mini-Max inequality}Let $X\in\mathcal{M}_{g,n}$ and assume
that there exists $f_{0},\dots,f_{k}\in C_{c}^{\infty}\left(X\right)$
with $\|f_{i}\|_{L^{2}}=1$ for $0\leqslant i\leqslant k$ and $\text{Supp}\left(f_{i}\right)\cap\text{Supp}\left(f_{j}\right)=\emptyset$
for $i\neq j$, such that 
\[
\max_{0\leqslant j\leqslant k}\int_{X}\|\textup{grad}f_{j}\left(z\right)\|^{2}\mathrm{d}\mu\left(z\right)<\kappa<\frac{1}{4}.
\]
Then $0<\lambda_{1}\left(X\right)\leqslant\dots\leqslant\lambda_{k}\left(X\right)$
exist and satisfy $\lambda_{k}\left(X\right)<\kappa$.
\end{lem}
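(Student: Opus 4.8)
The plan is to apply the mini-max principle, Lemma \ref{lem:Minimax}, to the non-negative self-adjoint Laplacian $\Delta$ acting on $L^{2}(X,\mathrm{d}\mu)$. Since $X\in\mathcal{M}_{g,n}$ is a finite-area hyperbolic surface with at least one cusp, its essential spectrum is $[\frac{1}{4},\infty)$, so $\inf\sigma_{ess}(\Delta)=\frac{1}{4}$; moreover $C_{c}^{\infty}(X)$ lies in the form domain of $\Delta$, and for $f\in C_{c}^{\infty}(X)$ integration by parts gives the Dirichlet-form identity $\langle f,\Delta f\rangle=\int_{X}\|\mathrm{grad}\,f(z)\|^{2}\,\mathrm{d}\mu(z)$. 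The functions $f_{0},\dots,f_{k}$ are orthonormal in $L^{2}(X)$, being unit vectors with pairwise disjoint supports, hence linearly independent and contained in the form domain.

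First I would bound the Rayleigh quotient on $V:=\mathrm{span}(f_{0},\dots,f_{k})$. For $i\neq j$ the gradients $\mathrm{grad}\,f_{i}$ and $\mathrm{grad}\,f_{j}$ have disjoint supports, since at any point outside the closed set $\mathrm{Supp}(f_{j})$ the function $f_{j}$ vanishes on a neighbourhood and hence so does $\mathrm{grad}\,f_{j}$, while $\mathrm{Supp}(f_{i})\cap\mathrm{Supp}(f_{j})=\emptyset$. Thus for $\psi=\sum_{j=0}^{k}c_{j}f_{j}$ one has $\|\mathrm{grad}\,\psi\|^{2}=\sum_{j=0}^{k}|c_{j}|^{2}\,\|\mathrm{grad}\,f_{j}\|^{2}$ pointwise, and integrating,
\[
\langle\psi,\Delta\psi\rangle=\sum_{j=0}^{k}|c_{j}|^{2}\int_{X}\|\mathrm{grad}\,f_{j}\|^{2}\,\mathrm{d}\mu<\kappa\sum_{j=0}^{k}|c_{j}|^{2}=\kappa\,\|\psi\|^{2}
\]
for every $\psi\in V\setminus\{0\}$, using $\max_{j}\int_{X}\|\mathrm{grad}\,f_{j}\|^{2}\,\mathrm{d}\mu<\kappa$. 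Since the unit sphere of the finite-dimensional space $V$ is compact and the Rayleigh quotient is continuous, $\Lambda:=\sup\{\langle\psi,\Delta\psi\rangle:\psi\in V,\ \|\psi\|=1\}<\kappa<\frac{1}{4}$.

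Next I would feed this into Lemma \ref{lem:Minimax}. Suppose, for contradiction, that $\Delta$ has at most $k$ eigenvalues below its essential spectrum, say $l\leq k$ of them. Applying the second part of Lemma \ref{lem:Minimax} with $s=k+1-l\geq1$ to the $l+s=k+1$ linearly independent functions $f_{0},\dots,f_{k}$ would give $\frac{1}{4}=\inf\sigma_{ess}(\Delta)\leq\Lambda<\frac{1}{4}$, a contradiction. Hence $\Delta$ has at least $k+1$ eigenvalues below $\frac{1}{4}$, so the $k+1$ smallest eigenvalues $\lambda_{0}(X)\leq\lambda_{1}(X)\leq\cdots\leq\lambda_{k}(X)$ all exist and lie below $\frac{1}{4}$. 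Applying the first part of Lemma \ref{lem:Minimax} to the $(k+1)$-st eigenvalue below the essential spectrum, which is $\lambda_{k}(X)$, with $f_{0},\dots,f_{k}$ as the competitor family, gives $\lambda_{k}(X)\leq\Lambda<\kappa$. Finally $\lambda_{1}(X)>0$ because $X$ is connected, so $0$ is a simple eigenvalue of $\Delta$ with constant eigenfunction, and is therefore $\lambda_{0}(X)$.

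I do not expect a serious obstacle: the argument is a routine application of the variational principle, and the only care needed is in the bookkeeping between the two indexing conventions for eigenvalues (those below the essential spectrum, as in Lemma \ref{lem:Minimax}, versus the indexing of $\lambda_{k}(X)$ as the $(k+1)$-st smallest eigenvalue) and in the two standard inputs, namely that $\inf\sigma_{ess}(\Delta)=\frac{1}{4}$ for a cusped finite-area hyperbolic surface and that $C_{c}^{\infty}(X)$ lies in the form domain of $\Delta$ with the stated Dirichlet-form identity.
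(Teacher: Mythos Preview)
Your proof is correct and follows essentially the same route as the paper's: bound the Rayleigh quotient on the span of the $f_j$ using disjoint supports, then invoke both parts of Lemma~\ref{lem:Minimax} (the second to rule out too few eigenvalues below the essential spectrum, the first to pin down $\lambda_k$). You are in fact slightly more careful than the paper in two places: you use compactness of the unit sphere of $V$ to upgrade the pointwise strict inequality $\langle\psi,\Delta\psi\rangle<\kappa$ to a strict bound on the supremum (the paper only records $\max\leqslant\kappa$, which literally yields $\lambda_k\leqslant\kappa$ rather than the stated $\lambda_k<\kappa$), and you explicitly justify $\lambda_1(X)>0$ via simplicity of the zero eigenvalue. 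One terminological nitpick: Lemma~\ref{lem:Minimax} is stated for $\psi_i\in\mathcal{D}(A)$, the operator domain, so you should say $C_c^\infty(X)\subset\mathcal{D}(\Delta)$ rather than the form domain; this is of course true and is what the paper uses.
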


\begin{proof}
Let $f_{0},\dots,f_{k}$ satisfy the assumptions of the Lemma. Since
$C_{c}^{\infty}\left(X\right)\subset\mathcal{D}\left(\Delta\right)$,
we see that 
\[
\min_{\psi_{1},\dots,\psi_{k+1}}\max\left\{ \frac{\langle\psi,\Delta\psi\rangle}{\|\psi\|^{2}}\mid\psi\in\text{span}\left(\psi_{1},\dots,\psi_{k+1}\right)\right\} \leqslant\max\left\{ \langle\psi,\Delta\psi\rangle\mid\psi\in\text{span}\left(f_{0},\dots,f_{k}\right),\|\psi\|_{L^{2}}=1\right\} .
\]
Let $\psi=\sum_{i=0}^{k}a_{i}f_{i}$ with $a_{i}\in\mathbb{C}$, then
$\|\psi\|_{L^{2}}=1$ implies that 
\[
1=\left\langle \sum_{i=0}^{k}a_{i}f_{i},\sum_{i=0}^{k}a_{i}f_{i}\right\rangle =\sum_{i=0}^{k}\left|a_{i}\right|^{2}\left\langle f_{i},f_{i}\right\rangle =\sum_{i=0}^{k}\left|a_{i}\right|^{2},
\]
and

\begin{align*}
\left\langle \psi,\Delta\psi\right\rangle  & =\left\langle \sum_{i=0}^{k}a_{i}f_{i},\Delta\sum_{i=0}^{k}a_{i}f_{i}\right\rangle =\sum_{i=0}^{k}\left|a_{i}\right|^{2}\left\langle f_{i},\Delta f_{i}\right\rangle <\kappa<\frac{1}{4},
\end{align*}
where we used that $\text{Supp}\left(f_{i}\right)\cap\text{Supp}\left(f_{j}\right)=\emptyset$
for $i\neq j$. We deduce that 
\[
\max\left\{ \langle\psi,A\psi\rangle\mid\psi\in\text{span}\left(f_{0},\dots,f_{k}\right),\|\psi\|_{L^{2}}=1\right\} <\kappa<\frac{1}{4}.
\]
 Assume that $X$ does not have $k+1$ eigenvalues below $\frac{1}{4}$,
then $\inf\sigma_{ess}\left(X\right)<\kappa<\frac{1}{4}$ by Lemma
\ref{lem:Minimax}, giving a contradiction. Then $\lambda_{1}\left(X\right)\leqslant\dots\leqslant\lambda_{k}\left(X\right)$
exist and 
\[
\lambda_{k}\left(X\right)<\kappa,
\]
by Lemma \ref{lem:Minimax}.
\end{proof}
\begin{lem}
\label{lem:geodesics_to_eigenvalues}Let $\ep<\frac{1}{4}$ and assume
that $X\in\mathcal{M}_{g,n}$ has $k+1$ geodesics of length $\leqslant\frac{\ep}{6}$,
each of which separates off two cusps. Then $\lambda_{k}\left(X\right)$
exists and satisfies 
\[
\lambda_{k}\left(X\right)\leqslant\ep.
\]
\end{lem}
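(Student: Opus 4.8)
The plan is to construct, from each of the $k+1$ short separating geodesics, a test function supported in a small neighbourhood of the corresponding cusp-pair region, with pairwise disjoint supports and with Rayleigh quotient bounded by $\ep$, and then invoke Lemma \ref{lem:Mini-Max inequality}. First I would set up the geometry: each geodesic $\gamma_j$ of length $\ell_j \leq \ep/6$ separates off a twice-punctured disc $Y_j$ from $X$; these $k+1$ pieces $Y_0,\dots,Y_k$ are pairwise disjoint since the geodesics are simple and disjoint (two geodesics of length below $2\operatorname{arcsinh}(1)$ cannot cross, and none can be contained in another's cusped piece without the enclosed geodesic being even shorter — or one can simply observe that distinct geodesics each separating off exactly two cusps from a punctured sphere bound disjoint twice-punctured discs). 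On each $Y_j$ the relevant geometry is that of a hyperbolic cylinder: by the Collar Lemma \cite[Theorem 4.4.6]{Bu2010}, $\gamma_j$ has an embedded collar of half-width $w_j$ with $\sinh(w_j) = 1/\sinh(\ell_j/2)$, which is large (of size $\sim \log(1/\ell_j)$) because $\ell_j$ is small.

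The core estimate is the standard logarithmic cut-off construction, exactly as in Buser \cite[Theorem 8.1.3]{Bu2010}. Working in Fermi coordinates $(\rho,t)$ on the collar, where $\rho \in (-w_j, w_j)$ is signed distance to $\gamma_j$ and $t \in \R/\ell_j\Z$ parametrises $\gamma_j$, the hyperbolic metric is $\df\rho^2 + \cosh^2(\rho)\,\df t^2$. One builds $f_j$ to be $1$ on the cusp side of the collar, to decay log-linearly across a sub-collar of width $\Theta(1/\ep)$, and to be $0$ (extended by zero) on the $X$-side; more precisely $f_j$ depends only on $\rho$, equals $1$ for $\rho \leq \rho_0$, equals $0$ for $\rho \geq \rho_1$, and interpolates so that $\partial_\rho f_j$ has magnitude $\sim (\rho_1-\rho_0)^{-1}$. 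Then $\|\operatorname{grad} f_j\|^2 = |\partial_\rho f_j|^2$ pointwise, and integrating over the collar,
\[
\int_X \|\operatorname{grad} f_j\|^2 \, \df\mu \;\leq\; \frac{C\,\ell_j \cosh(\rho_1)}{(\rho_1 - \rho_0)^2},
\]
while $\|f_j\|_{L^2}^2$ is bounded below by the area of the cusp region $Y_j$ already covered by $\{f_j = 1\}$, which is bounded below by a universal constant (a twice-punctured disc has area $2\pi$). Choosing the transition region to sit deep in the collar where $\cosh(\rho_1)\ell_j = O(1)$ and taking $\rho_1 - \rho_0 = c/\sqrt{\ep}$ (possible since $\ell_j \leq \ep/6$ makes the collar wide enough), one arranges the Rayleigh quotient $\int_X \|\operatorname{grad} f_j\|^2\,\df\mu \,/\, \|f_j\|_{L^2}^2 < \ep < \tfrac14$. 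After normalising each $f_j$ to unit $L^2$ norm, the hypotheses of Lemma \ref{lem:Mini-Max inequality} are met with $\kappa = \ep$, yielding that $\lambda_k(X)$ exists and $\lambda_k(X) \leq \ep$.

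The main obstacle — really the only non-bookkeeping point — is making the constants genuinely uniform, so that the single threshold $\ell_j \leq \ep/6$ suffices for every $\ep < 1/4$ simultaneously and independently of $X$, $n$, $g$, and $j$; this amounts to tracking the collar width as a function of $\ell_j$ and checking that the log-cutoff has enough room, which is precisely the computation in \cite[Theorem 8.1.3]{Bu2010} and I would cite it rather than reproduce it. A secondary technical point is the disjointness of the supports: since the collars of disjoint short geodesics need not themselves be disjoint in general, I would instead take the support of $f_j$ to lie entirely inside $Y_j \cup (\text{half-collar of } \gamma_j \text{ on the } Y_j \text{ side})$, which is contained in the closed cusped piece bounded by $\gamma_j$; as these cusped pieces are pairwise disjoint (distinct among the $Y_0,\dots,Y_k$), so are the supports. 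Everything else is the routine Fermi-coordinate integral sketched above.
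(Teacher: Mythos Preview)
Your overall strategy—build disjointly supported test functions on the pieces $Y_j$ and invoke Lemma~\ref{lem:Mini-Max inequality}—is exactly what the paper does, but the execution has two problems.

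First, your $f_j$ equals $1$ throughout the cusp ends of $Y_j$ and hence is not compactly supported, so Lemma~\ref{lem:Mini-Max inequality} as stated does not apply. The paper handles this by also truncating near the length-$\tfrac{\ep}{6}$ horocycles $\beta_{j,1},\beta_{j,2}$ in each cusp of $Y_j$; the test function is then supported in the compact region of $Y_j$ between $\gamma_j$ and these two horocycles. (Alternatively one could invoke density of $C_c^\infty$ in the form domain, but this should be said.)

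Second, and more seriously, your parameter choice does not work uniformly in $\ep\in(0,\tfrac14)$: the collar half-width is $w_j=\operatorname{arcsinh}\bigl(1/\sinh(\ell_j/2)\bigr)=O(\log(1/\ep))$ under $\ell_j\le\ep/6$, so a transition of width $c/\sqrt{\ep}$ (let alone $\Theta(1/\ep)$, as you write earlier) cannot fit inside the collar once $\ep$ is small, and no universal $c$ makes your Rayleigh estimate go through. The paper sidesteps this entirely by using a fixed width-$1$ distance cutoff adjacent to each of the three short curves $\gamma_j,\beta_{j,1},\beta_{j,2}$: since $\|\mathrm{grad}\,g_j\|=1$ on the transition strips and $0$ elsewhere, the Dirichlet energy equals the area of those strips, which is at most $\tfrac{5}{6}\ep\sinh 1$ (each curve has length $\le\tfrac{\ep}{6}$), while the plateau $\{g_j=1\}$ has area at least $2\pi-O(\ep)$. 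The Rayleigh quotient is then below $\ep$ with no tuning and no appeal to the collar lemma. Your collar argument would also succeed with this simpler choice—take $\rho_0=0$ and $\rho_1$ a fixed constant, so that $\cosh\rho_1=O(1)$ (not merely $\ell_j\cosh\rho_1=O(1)$), giving Dirichlet energy $O(\ell_j)=O(\ep)$—but not with the parameters you wrote.
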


\begin{proof}
We adapt the proof of \cite[Theorem 8.1.3]{Bu2010}. First label the
geodesics from $\gamma_{0},\dots,\gamma_{k}$. Let $Y_{j}$ be the
subsurface with two cusps with geodesic boundary $\gamma_{j}$. Denote
the two cusps bounded by $\gamma_{j}$ by $C_{j,1},C_{j,2}$ and their
unique length $\frac{\ep}{6}$ horocycles by $\beta_{j,1},\beta_{j,2}$.
Let $\tilde{Y}_{j}\subset Y_{j}$ denote the compact region of $Y_{j}$
bounded by $\beta_{j,1},\beta_{j,2}$. We then define the function
\[
g_{k}\left(z\right)\eqdf\begin{cases}
\text{dist}\left(z,\gamma_{k}\right) & \text{if \ensuremath{z\in\tilde{Y}_{k}} and \ensuremath{\text{dist}\left(z,\gamma_{k}\right)\leqslant1,} }\\
\text{dist}\left(z,\beta_{k,1}\right) & \text{if \ensuremath{z\in\tilde{Y}_{k}} and \ensuremath{\text{dist}\left(z,\beta_{k,1}\right)\leqslant1,} }\\
\text{dist}\left(z,\beta_{k,2}\right) & \text{if \ensuremath{z\in\tilde{Y}_{k}} and \ensuremath{\text{dist}\left(z,\beta_{k,2}\right)\leqslant1,} }\\
0 & \text{if }\ensuremath{z\notin\tilde{Y}_{k}},\\
1 & \text{otherwise, }
\end{cases}
\]
which is well-defined by the choice of $\varepsilon$. Then writing
\[
Z_{k}\eqdf\left\{ z\in\tilde{Y}_{k}\mid\text{dist}\left(z,\gamma_{k}\right)\leqslant1\right\} \cup\left\{ z\in\tilde{Y}_{k}\mid\text{dist}\left(z,\beta_{k,1}\right)\leqslant1\right\} \cup\left\{ z\in\tilde{Y}_{k}\mid\text{dist}\left(z,\beta_{k,1}\right)\leqslant1\right\} ,
\]
we have that $\|\text{grad}g_{k}\left(z\right)\|^{2}=1$ for $z\in Z_{k}$,
$\|\text{grad}g_{k}\left(z\right)\|^{2}=0$ elsewhere and one can
calculate that
\[
\text{Vol}\left(Z_{k}\right)\leq\frac{5}{6}\ep\sinh1.
\]
Note that each $g_{k}$ is compactly supported by the definition of
$\tilde{Y}_{k}$. By $L^{2}$ normalizing and smoothly approximating,
we can find functions $f_{0},\dots,f_{k}\in C_{c}^{\infty}\left(X\right)$
with $\|f_{i}\|_{L^{2}}=1$ for $0\leqslant i\leqslant k$ and $\text{Supp}\left(f_{i}\right)\cap\text{Supp}\left(f_{j}\right)=\emptyset$
for $i\neq j$, such that 
\[
\max_{0\leqslant j\leqslant k}\int_{X}\|\text{grad}f_{j}\left(z\right)\|^{2}\mathrm{d}\mu\left(z\right)<\varepsilon<\frac{1}{4},
\]
and the conclusion follows from Lemma \ref{lem:Mini-Max inequality}. 
\end{proof}
We conclude with the proof of Theorem \ref{thm:small-eigenvalues}.
\begin{thm}[Theorem \ref{thm:small-eigenvalues}]
\label{Variable n asymptotic}There is a constant $C>0$ such that
for any function $k:\mathbb{N}\to\mathbb{N}$ with $k=o\left(n\right)$
and $k\to\infty$ as $n\to\infty$, 

\[
\mathbb{P}_{n}\left(\lambda_{k}(X)<C\sqrt{\frac{k}{n}}\right)\to1,
\]
as $n\to\infty$. In particular, for any $\ep>0$, $\mathbb{P}_{n}\left[\lambda_{k}\left(X\right)<\ep\right]\to1$
as $n\to\infty$.
\end{thm}

\begin{proof}
For $X\in\mathcal{M}_{n}$, let $\tilde{N}_{n,2,[0,l]}(X)$ be the
function that counts the number of primitive closed geodesics on $X$
which separate off $2$ cusps with lengths in the window $\left[0,l\right]$,
note that here there is no rescaling of the window. We claim that
there exists a constant $C'>0$ such that for any function $L:\mathbb{N}\to(0,\infty)$
with $L(n)\to\infty$ as $n\to\infty$ and $L=o\left(\sqrt{n}\right)$,
the probability that $X\in\mathcal{M}_{0,n}$ satisfies 
\begin{equation}
\tilde{N}_{n,2,\left[0,\frac{1}{6L}\right]}\left(X\right)\geqslant C'\frac{n}{L^{2}},\label{eq:number-short-curves}
\end{equation}
tends to $1$ as $n\to\infty$. We now show that Theorem \ref{Variable n asymptotic}
follows from this claim. Let $k=o\left(n\right)$ and pick $L$ with
$L\to\infty$ and $L\leqslant\sqrt{\frac{C'n}{k+1}}$. It follows
from the above claim that with probability tending to $1$, there
are at least $k+1$ curves with lengths $\leqslant\frac{1}{6L}$.
Then by Lemma \ref{lem:geodesics_to_eigenvalues}, $\lambda_{k}<\frac{1}{L}$.The
remainder of the proof is dedicated to showing (\ref{eq:number-short-curves}).

Consider the sequence of random variables
\[
Y_{n}\eqdf\frac{L^{2}\tilde{N}_{n,2,\left[0,\frac{1}{L}\right]}\left(X\right)}{n}.
\]
First we want to show that $\text{Var}\left(Y_{n}\right)\to0$ as
$n\to\infty$. By nearly identical calculations to the proof of Proposition
\ref{prop:systole-lb}, we see that 
\begin{align}
\mathbb{E}_{n}\left[Y_{n}\right] & ={n \choose 2}\frac{L^{2}}{nV_{n}}\int_{0}^{\frac{1}{L}}lV_{n-1}(l)dl=\frac{x_{0}}{8\pi^{2}}+o(1).\label{eq:exp-calc}
\end{align}
Calculating the second moment,
\begin{align*}
\mathbb{E}\left[\left(\frac{L^{2}\tilde{N}_{n,2,\left[0,\frac{1}{L}\right]}\left(X\right)}{n}\right)^{2}\right] & =\mathbb{E}\left[\left(\frac{L^{2}}{n}\sum_{\substack{\gamma\in\mathcal{P}(X)\\
\gamma\ \text{separates off exactly \ensuremath{2} cusps}
}
}\ind_{\frac{1}{L}}\left(l_{\gamma}\left(X\right)\right)\right)^{2}\right]\\
 & =\frac{L^{4}}{n^{2}}\mathbb{E}\left[\sum_{\substack{\gamma\in\mathcal{P}(X)\\
\gamma\ \text{separates off exactly \ensuremath{2} cusps}
}
}\ind_{\frac{1}{L}}\left(l_{\gamma}\left(X\right)\right)\right]\\
 & +\frac{L^{4}}{n^{2}}\mathbb{E}\left[\sum_{\substack{(\gamma_{1},\gamma_{2})\in\mathcal{P}\left(X\right)\times\mathcal{P}\left(X\right)\\
\gamma_{1}\neq\gamma_{2}\\
\gamma_{1},\gamma_{2}\ \text{separate off exactly \ensuremath{2} cusps}
}
}\ind_{\frac{1}{L}}\left(l_{\gamma_{1}}\left(X\right),l_{\gamma_{2}}\left(X\right)\right)\right].
\end{align*}
Then 
\[
\frac{L^{4}}{n^{2}}\mathbb{E}\left[\sum_{\substack{\gamma\in\mathcal{P}(X)\\
\gamma\ \text{separates off exactly \ensuremath{2} cusps}
}
}\ind_{\frac{1}{L}}\left(l_{\gamma}\left(X\right)\right)\right]=\frac{L^{2}}{n}\mathbb{E}\left[Y_{n}\right]=o\left(1\right).
\]
Again by similar calculations to the proof of Proposition \ref{prop:systole-lb}
and the fact that the pairs of curves $\gamma_{1}$ and $\gamma_{2}$
are disjoint when they have length at most $\frac{1}{L}\to0$,
\begin{align*}
\frac{L^{4}}{n^{2}}\mathbb{E}\left[\sum_{\substack{(\gamma_{1},\gamma_{2})\in\mathcal{P}\left(X\right)\times\mathcal{P}\left(X\right)\\
\gamma_{1}\neq\gamma_{2}\\
\gamma_{1},\gamma_{2}\ \text{separate off exactly \ensuremath{2} cusps}
}
}\ind_{\frac{1}{L}}\left(l_{\gamma_{1}}\left(X\right),l_{\gamma_{2}}\left(X\right)\right)\right] & ={n \choose 2,2}\frac{L^{4}}{n^{2}V_{n}}\int_{0}^{\frac{1}{L}}\int_{0}^{\frac{1}{L}}lV_{n-2}(l_{1},l_{2})dl_{1}dl_{2}\\
 & =\left(\frac{x_{0}}{8\pi^{2}}\right)^{2}+o\left(1\right).
\end{align*}
We conclude that as $n\to\infty$.
\begin{equation}
\text{Var}\left[Y_{n}\right]=\mathbb{E}\left[Y_{n}^{2}\right]-\mathbb{E}_{n}\left[Y_{n}\right]^{2}=o\left(1\right).\label{eq:Variance}
\end{equation}
By Chebyshev's inequality, we have 
\[
\mathbb{P}\left[\left|Y_{n}-\mathbb{E}\left[Y_{n}\right]\right|\geqslant\frac{x_{0}}{16\pi^{2}}\right]\leqslant\frac{256\pi^{4}}{x_{0}^{2}}\text{Var}\left[Y_{n}\right]\to0,
\]
as $n\to\infty$. In particular, with probability at least $1-\frac{256\pi^{4}}{x_{0}^{2}}\text{Var}\left[Y_{n}\right]$,
one has $Y_{n}>\mathbb{E}\left[Y_{n}\right]-\frac{x_{0}}{16\pi^{2}}$.
Moreover, by (\ref{eq:exp-calc}), there exists $A>0$ such that for
all $n\geq A$, one has $\mathbb{E}\left[Y_{n}\right]>\frac{3}{2}\frac{x_{0}}{16\pi^{2}}$.
Put together, this shows that $Y_{n}\geq\frac{x_{0}}{32\pi^{2}}$
for any $n\geq A$, with probability at least $1-\frac{256\pi^{4}}{x_{0}^{2}}\text{Var}\left[Y_{n}\right]$.
It follows that 
\[
\tilde{N}_{n,2,\left[0,\frac{1}{6L}\right]}\left(X\right)\geqslant\frac{x_{0}}{32\pi^{2}}\frac{n}{(6L)^{2}},
\]
with probability tending to $1$ as $n\to\infty$.
\end{proof}
\begin{rem}
\label{rem:g>0}The proof of Theorem \ref{thm:small-eigenvalues}
also works for surfaces with $\textit{fixed}$ genus $g>0$ with $n\to\infty$,
i.e. there is a constant $C>0$ such that for any function $k:\mathbb{N}\to\mathbb{N}$
with $k=o\left(n\right)$ and $k\to\infty$ as $n\to\infty$ and any
fixed $g$, $\mathbb{P}_{g,n}\left(\lambda_{k}(X)<C\sqrt{\frac{k}{n}}\right)\to1$
as $n\to\infty$. For this, one simply uses \cite[Theorem 6.1]{Ma.Zo00}
with $g\geqslant0$ fixed in the calculations leading to (\ref{eq:Variance}). 
\end{rem}

\begin{rem}
The proof Theorem \ref{thm:small-eigenvalues} shows that for $L=L\left(n\right)$
with $L\to\infty$ as $n\to\infty$ and $L=o\left(\sqrt{n}\right)$,
the random variable $\frac{L^{2}\tilde{N}_{n,2,\left[0,\frac{1}{L}\right]}(X)}{n}$
converges in distribution to the constant distribution $\frac{x_{0}}{8\pi^{2}}$.
\end{rem}

\section*{Acknowledgments}

JT was supported by funding from the European Research Council (ERC)
under the European Union’s Horizon 2020 research and innovation programme
(grant agreement No 949143). We thank Michael Magee for discussions
about this work, in particular with regards to the computation of
the constant $\sum_{i=2}^{\infty}\frac{V_{i+1}}{i!}\left(\frac{x_{0}}{2\pi^{2}}\right)^{i-1}$.
We thank Yang Shen and Yunhui Wu for comments on an earlier version
of this work about the topological types of systolic curves which
inspired Theorem \ref{thm:main-thm-2}. We thank the anonymous referee
for their comments and corrections and an improvment of Theorem 5.4.

\bibliographystyle{amsalpha}
\bibliography{puncturedspheresbib}

\noindent Will Hide, \\
Department of Mathematical Sciences,\\
Durham University, \\
Lower Mountjoy, DH1 3LE Durham,\\
United Kingdom

\noindent \texttt{william.hide@durham.ac.uk}~\\
\texttt{}~\\

\noindent Joe Thomas, \\
Department of Mathematical Sciences,\\
Durham University, \\
Lower Mountjoy, DH1 3LE Durham,\\
United Kingdom

\noindent \texttt{joe.thomas@durham.ac.uk}
\end{document}